\theoremstyle{plain}
\newtheorem{theorem}{Theorem}
\newtheorem{property}{Property}
\newtheorem{lemma}{Lemma}
\newtheorem{remark}{Remark}
\newtheorem{proposition}{Proposition}
\newtheorem{definition}{Definition}
\begin{document}
\title[Canonical Thurston Obstructions]
{Canonical Thurston Obstructions for Sub-Hyperbolic Semi-Rational Branched Coverings}
\author{Tao Chen and Yunping Jiang}

\address{Department of Mathematics,
CUNY Graduate Center, 365 Fifth Avenue, New York, NY 10016}
\email{chentaofdh@gmail.com}

\address{Department of Mathematics,
CUNY Queens College and Graduate Center, Flushing, NY 11367}
\email{yunping.jiang@qc.cuny.edu}

\thanks{}

\subjclass[2000]{58F23, 30D05}

\begin{abstract} We prove that the canonical Thurston obstruction
for a sub-hyperbolic semi-rational branched covering exists if the
branched covering is not CLH-equivalent to a rational
map.
\end{abstract}

\maketitle

\section{Introduction}

Let $S^{2}$ be the two-sphere. We use $\widehat{\mathbb C}$
to denote the Riemann sphere which is $S^{2}$ equipped with
the standard complex structure. 
All maps in this paper are orientation-preserving.

Let $f: S^{2} \to S^{2}$ be a branched covering of degree $d\ge 2$. Let
$$
C_{f} = \{x \in S^{2}\:|\: \deg_{x} f \ge 2\}
$$
denote the set of the critical points of $f$ and
$$
P_{f}=\overline{ \bigcup_{k\ge 1} f^{k}(C_{f})}
$$
denote the post-critical set of $f$.

We say $f$ is {\em critically finite} if
$\sharp P_{f}$ is finite. We say $f$ is {\em geometrically finite} if
$\sharp P_{f}$ is infinite but the accumulation set $P_{f}'$ of $P_{f}$
is a finite set.

\vspace*{5pt}
\begin{definition}~\label{Comb}
Suppose $f, g: S^{2} \to S^{2}$ are two branched coverings of degree $d\geq 2$.
They are said to be combinatorially equivalent if there exists a pair of
homeomorphisms $\phi,\ \varphi:  S^{2}\to S^{2}$ such that
\begin{itemize}
\item[a)] $\phi$ is isotopic to $\varphi$ rel $P_f$ and
\item[b)] $\phi\circ f=g\circ \varphi$.
\end{itemize}
\end{definition}

Note that in a) of Definition~\ref{Comb}, the statement that $\phi$
is isotopic to $\varphi$ rel $P_f$ means that there is a continuous map
$H(x,t): S^{2}\times [0,1] \to S^{2}$ such that
\begin{enumerate}
\item for each $t\in [0,1]$, $H_{t}(x)=H(x,t): S^{2} \to S^{2}$ is a homeomorphism;
\item $H_{0}=\phi$ and $H_{1}=\varphi$;
\item for any point $y\in P_f$ and any $t\in [0,1]$, $H_t(y)=\phi(y)=\varphi(y)$.
\end{enumerate}

Suppose $f: S^2\rightarrow S^2$ is critically finite. Then
there is an orbifold structure on $S^{2}$ associated to $f$ as follows.
Define the signature $\nu_f: S^2 \rightarrow \mathbb{Z}_+\cup \{\infty\}$ as that
$\nu_f(x)$ is the least common multiple of local degrees $\deg_{y} f^{n}$ over $y\in f^{-n}(x)$ for all $n\geq 1$.
The \emph{orbifold} associated to $f$ is $\Omega_{f} =(S^{2}, \nu_{f})$. The Euler characteristic of $\Omega_f$, by definition, is
$$
\chi(\Omega_f)=2-\Sigma_{x\in S^2}\Big(1-\frac{1}{\nu_f(x)}\Big).
$$
It is known that $\chi (\Omega_{f}) \leq 0$ (see Proposition 9.1 (i) in \cite{DH}).
Moreover, the orbifold $\Omega_f$ is called \emph{hyperbolic} if $\chi(\Omega_f)<0$ and \emph{parabolic} if $\chi (\Omega_{f}) = 0$.

\vspace*{5pt}

\renewcommand{\thetheorem}{\Alph{theorem}}
\begin{theorem}[\cite{Th,DH}]~\label{Thurston}
Suppose $f$ is a critically finite branched covering with
a hyperbolic orbifold $\Omega_{f}$. Then $f$ is combinatorially equivalent to a
rational map $R$ if and only if $f$ has no Thurston obstructions.
Moreover, the rational map $R$ is unique up to conjugations by
automorphisms of the Riemann sphere.
\end{theorem}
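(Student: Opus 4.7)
The plan is to follow Thurston's Teichm\"uller-theoretic strategy, handling the two implications separately and then the uniqueness. The easy direction asks that a rational map $R$ combinatorially equivalent to $f$ cannot admit a Thurston obstruction. Here I would transport any putative obstruction of $f$ to $R$ via the combinatorial equivalence, and then show that for a rational $R$ with hyperbolic orbifold, the spectral radius $\lambda(\Gamma)$ of the transition matrix on any stable multicurve is strictly less than $1$. This last estimate is a Gr\"otzsch-type inequality comparing moduli of annular neighborhoods of the curves in $\Gamma$ with their $R$-preimage components, where hyperbolicity of $\Omega_{R}$ is precisely what forces the inequality to be strict (the parabolic case is where equality could be achieved).

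For the converse, I would construct a rational map from the no-obstruction hypothesis via Thurston's pullback operator $\sigma_{f}$ on the Teichm\"uller space $\mathcal{T}_{f}=\mathrm{Teich}(S^{2},P_{f})$. Using only that $f^{-1}(P_{f})\supseteq P_{f}$ and that $f$ is a branched covering, one can pull back any marked complex structure along $f$; this defines $\sigma_{f}:\mathcal{T}_{f}\to\mathcal{T}_{f}$, and a fixed point $\tau$ of $\sigma_{f}$ yields a complex structure in which $f$ is holomorphic, hence a rational map $R$ combinatorially equivalent to $f$. The map $\sigma_{f}$ is weakly contracting in the Teichm\"uller metric, so the whole question reduces to showing that the orbit $\{\sigma_{f}^{n}(\tau_{0})\}$ does not escape to infinity in the moduli space $\mathcal{M}_{f}$.

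This non-escape is where I expect the main difficulty. I would argue by contrapositive: if the orbit diverges, Mumford's compactness theorem produces a sequence of simple closed geodesics on $(S^{2},P_{f},\tau_{n})$ whose hyperbolic lengths tend to zero. From their homotopy classes I would extract a finite $f$-stable multicurve $\Gamma$, and then use an extremal-length comparison between annular neighborhoods of $\Gamma$ and their components of $f^{-1}$ to deduce $\lambda(\Gamma)\geq 1$, contradicting the assumption. The delicate point is passing from a merely short-geodesic sequence to a genuinely $f$-stable combinatorial object; this requires careful bookkeeping of how preimages of short curves redistribute under $f$, together with an appeal to the hyperbolicity of $\Omega_{f}$ to close the borderline case $\lambda(\Gamma)=1$.

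Finally, uniqueness follows by showing that the fixed point of $\sigma_{f}$ is essentially unique. If $R_{1},R_{2}$ are both combinatorially equivalent to $f$, the associated fixed points $\tau_{1},\tau_{2}\in\mathcal{T}_{f}$ of $\sigma_{f}$ induce identifications $(S^{2},\tau_{i})\cong\widehat{\mathbb C}$ conjugating $f$ to $R_{i}$. The strict contraction of $\sigma_{f}$ along the Teichm\"uller geodesic between $\tau_{1}$ and $\tau_{2}$ (valid in the hyperbolic orbifold case by a second eigenvalue/norm argument on the cotangent pullback) forces $\tau_{1}=\tau_{2}$, so the induced change of coordinates is a M\"obius transformation conjugating $R_{1}$ to $R_{2}$.
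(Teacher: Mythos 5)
This theorem is not proved in the paper; it is stated as a known result and cited to Thurston and Douady--Hubbard. So there is nothing in the paper to compare your proof against directly. What can be said is that your sketch is a faithful summary of the Douady--Hubbard strategy: define the pullback $\sigma_f$ on $\mathrm{Teich}(S^2,P_f)$, establish weak contraction, use Mumford compactness to convert divergence of the orbit in moduli space into a short-geodesic sequence, extract an $f$-stable multicurve and derive $\lambda(\Gamma)\geq 1$, with hyperbolicity of $\Omega_f$ ruling out the borderline $\lambda=1$ case; and for uniqueness, analyze the coderivative of $\sigma_f$ to upgrade weak contraction to strict contraction. The outline is correct in broad strokes. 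Two places where you have compressed real work into a phrase: the step from ``orbit does not escape'' to ``orbit converges to a fixed point'' is not a formal consequence of weak contraction; Douady--Hubbard need the norm estimate $\|d\sigma_f\|<1$ on compact sets (or strict contraction of a fixed iterate), which is exactly the ``second eigenvalue/norm argument'' you defer to the uniqueness discussion but is in fact needed already for existence. And the passage ``extract a finite $f$-stable multicurve $\Gamma$'' from a short-geodesic sequence is precisely the hard combinatorial bookkeeping step, so flagging it as ``delicate'' is honest but a full proof would have to execute it.

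It is worth noting the contrast with the present paper's own methods, since the paper's Main Theorem is the sub-hyperbolic semi-rational analogue of part (2) of Pilgrim's refinement of this theorem. The authors explicitly observe that the Mumford compactness argument underlying your sketch does not transfer to the geometrically finite setting (because $P_f$ is infinite and the relevant Teichm\"uller space is of a bordered surface with boundary circles); they instead work with a bounded-geometry subspace $\mathcal T_{f,b}$ of $T_f$ and get compactness from elementary planar geometry rather than from Mumford. So your sketch uses the classical tool that this paper was specifically designed to avoid.
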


The reader can refer to \S2 for the definition of Thurston obstruction.
If a critically finite branched covering $f$ is not equivalent
to a rational map, then there must exist Thurston obstructions.
The canonical Thurston obstruction is the most interesting
one among all Thurston obstructions. The reader can 
refer to \S2 for the term non-peripheral and \S4 for the definition of $l(\gamma,x)$.

\vspace*{5pt}
\begin{theorem}[\cite{Pi}]~\label{Pilgrim}
Suppose $f$ is a critically finite branched covering
with a hyperbolic orbifold $\Omega_{f}$, and
let $\Gamma_c$ denote the set of all homotopy class of
non-peripheral curves $\gamma$ in $S^2\setminus P_f$
such that $l(\gamma, x_n)\to 0$ as $n\to \infty$. Then
\begin{itemize}
\item[(1)] $\Gamma_c$ is empty, and $f$ is combinatorially equivalent to a rational map;
\item[(2)] Otherwise, $\Gamma_c$ is a Thurston obstruction and hence is a canonically
defined Thurston obstruction to the existence of a rational map.
\end{itemize}
\end{theorem}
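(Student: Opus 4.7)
The plan is to analyze the long-term behavior of the orbit $x_n = \sigma_f^n(x_0)$ under the Thurston pullback map $\sigma_f$ on the Teichm\"uller space $T(S^2, P_f)$, recalling that $l(\gamma, x_n)$ denotes the hyperbolic length of the geodesic representative of $\gamma$ in $S^2 \setminus P_f$ with respect to the complex structure $x_n$. For conclusion (1), if $\Gamma_c = \emptyset$, no non-peripheral curve shrinks along the orbit, so by Mumford's compactness theorem the projected orbit $\{[x_n]\}$ is precompact in the moduli space. Combined with the hyperbolic orbifold hypothesis and Theorem~\ref{Thurston}, this precompactness produces a fixed point of $\sigma_f$ through the standard contraction argument, and any such fixed point furnishes a rational map $R$ combinatorially equivalent to $f$.

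For conclusion (2), first show that $\Gamma_c$ is finite: at most $\sharp P_f - 3$ pairwise disjoint non-homotopic non-peripheral simple closed curves can coexist on $S^2 \setminus P_f$, and by the collar lemma any two classes in $\Gamma_c$ are eventually realized disjointly in the structures $x_n$. Next verify $f$-stability of $\Gamma_c$: if $\gamma \in \Gamma_c$ and $\delta$ is a non-peripheral component of $f^{-1}(\gamma)$, the Schwarz contraction for the holomorphic branched covering $f:(S^2, x_{n+1})\to(S^2, x_n)$ yields a pullback length inequality
\[
l(\delta, x_{n+1}) \leq \frac{l(\gamma, x_n)}{\deg(f|_\delta : \delta \to \gamma)},
\]
so $l(\delta, x_n) \to 0$ and $\delta \in \Gamma_c$.

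The main obstacle is to establish that the leading eigenvalue $\lambda(\Gamma_c)$ of the Thurston linear transformation $f_{\Gamma_c}$ is at least $1$, which is precisely the condition in \S2 for $\Gamma_c$ to be a Thurston obstruction. The plan is to argue by contradiction: suppose $\lambda(\Gamma_c) < 1$, and study the restriction of $\sigma_f$ to the thin part of $T(S^2, P_f)$ adapted to $\Gamma_c$, where in Fenchel--Nielsen coordinates the iteration of the lengths along $\Gamma_c$ is governed to leading order by the matrix $f_{\Gamma_c}$. Under $\lambda(\Gamma_c) < 1$ a contraction argument would then force $l(\gamma, x_n)$ to remain bounded away from zero for large $n$, contradicting the defining property $l(\gamma, x_n) \to 0$ of $\Gamma_c$. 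The delicate part is the careful analysis of the cross-terms coming from peripheral components of preimages and from curves that enter $\Gamma_c$ only asymptotically; for this bookkeeping I would follow the detailed argument in \cite{Pi}.
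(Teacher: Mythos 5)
The paper does not give a proof of Theorem~\ref{Pilgrim}; it is quoted from Pilgrim's paper as background, and the authors' own contribution is the proof of the geometrically finite analogue (Theorem~\ref{ChenJiang}) in Sections 4--7. So there is no proof in the paper to compare your proposal against directly, but it can be read against that closely parallel argument.

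For part (1), your Mumford compactness argument is the standard route in the critically finite setting and is consistent with how Douady--Hubbard and Pilgrim handle it. Note that the introduction explicitly flags this route as unavailable in the geometrically finite case (the accumulation set $P_f'$ kills compactness of the relevant moduli space), which is why the authors introduce the bounded geometry subspace $\mathcal{T}_{f,b}$ of Definition~\ref{bgg} as a replacement. Your sketch is fine for Theorem~\ref{Pilgrim} itself.

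For part (2), two issues. First, your pullback length inequality has the degree in the wrong place. Restricted to a component $\delta \to \gamma$ of degree $d_\delta$, the holomorphic map $f:(S^2,x_{n+1}) \to (S^2,x_n)$ is a local isometry from $S^2\setminus f^{-1}(P_f)$ (with structure $x_{n+1}$) to $S^2\setminus P_f$ (with structure $x_n$), so the length of $\delta$ in the intermediate surface equals $d_\delta\, l(\gamma,x_n)$; it is the inclusion $S^2 \setminus f^{-1}(P_f) \hookrightarrow S^2\setminus P_f$ that invokes Schwarz and decreases length, giving
$$
l(\delta,x_{n+1}) \leq d_\delta\, l(\gamma,x_n),
$$
not $l(\gamma,x_n)/d_\delta$. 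The factor $1/d_\delta$ enters the Thurston matrix $A_\Gamma$ because that matrix acts on reciprocals of lengths, i.e.\ moduli. Since $d_\delta \leq d$, $f$-stability of $\Gamma_c$ still follows, but the sign of the exponent is essential in everything downstream; compare Lemma~\ref{modinq4}, where the degree sits correctly in the numerator.

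Second, and more seriously, the crucial step $\lambda(A_{\Gamma_c}) \geq 1$ is not actually supplied. You correctly identify the obstacle (cross-terms, and curves entering $\Gamma_c$ only asymptotically) and then defer the bookkeeping to Pilgrim's paper. That bookkeeping is the theorem. In the paper's proof of the Main Theorem, this is handled by splitting a stable multi-curve into $\Gamma_0$ and $\Gamma_\infty$ via depth over $\Gamma_{Ob}$ (Definition~\ref{depth}, Lemma~\ref{decomp}), then proving an upper bound on $\Gamma_\infty$ lengths (Proposition~\ref{fundamental}, Lemma~\ref{lbcurve})---those lengths cannot shrink past a threshold precisely because $\lambda(A_{\Gamma_\infty}) < 1$---and a quasi-nondecreasing lower bound on $\Gamma_0$ lengths (Propositions~\ref{main0} and~\ref{main}). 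Section~7 then shows that the set of persistently shrinking curves coincides with the finite-depth part of some $\Gamma_{J_c,x_{n_c}}$, which is a Thurston obstruction. A bare Fenchel--Nielsen thin-part contraction as you sketch it does not produce this dichotomy; without an analogue of the $\Gamma_0/\Gamma_\infty$ decomposition, the contradiction argument from $\lambda < 1$ cannot be closed.
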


The nonexistence of Thurston's obstruction condition is essentially true
for any rational map.

\vspace*{5pt}
\begin{theorem}[\cite{Mc}]~\label{McMullen}
Suppose $R: \hat{\mathbb C}\to \hat{\mathbb C}$ is a rational map.
Let $\Gamma$ be a multi-curve on $\hat{\mathbb C}-P_{R}$. It can be
a Thurston obstruction only in the following cases:
\begin{itemize}
\item[1)] $R$ is critically finite with $\#P_{R}=4$
and the orbifold $\Omega_{R}=(S^{2}, (2,2,2,2))$ is parabolic.
And, moreover, $R$ is a double covered by an integral torus endomorphism (it is a special case of a Latt\'es map).
\item[2)] $P_{R}$ is an infinite set and $\Gamma$ includes the essential
curves in a finite system of annuli permuted by $R$. These annuli
lie in Siegel disks or Herman rings for $R$ and each annulus is a
connected component of $\widehat{\mathbb C}-P_{R}$.
\end{itemize}
\end{theorem}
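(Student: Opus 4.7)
The plan is to control the leading eigenvalue $\lambda(\Gamma)$ of the Thurston transition matrix $A_\Gamma$ by conformal moduli, and then extract the two cases from the rigidity that appears when $\lambda(\Gamma) \ge 1$. For each $\gamma_i \in \Gamma$ pick an essential annular neighborhood $A_i \subset \widehat{\mathbb C}\setminus P_R$ isotopic to $\gamma_i$ with modulus $m_i = \mathrm{mod}(A_i)$. Because $R$ is holomorphic, each component $B$ of $R^{-1}(A_j)$ is an annulus, and when $B$ is isotopic to $\gamma_i$ in $\widehat{\mathbb C}\setminus P_R$ and covers $A_j$ with degree $d$, one has $\mathrm{mod}(B) = m_j/d$.

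Applying Gr\"otzsch's inequality inside each $A_i$ to the disjoint essential sub-annuli isotopic to $\gamma_i$ that arise as components of $R^{-1}(A_j)$ yields
$$\sum_{j} (A_\Gamma)_{ij}\, m_j \;\le\; m_i \qquad (i=1,\dots,n),$$
so $A_\Gamma m \le m$ entrywise. Positivity and Perron--Frobenius force $\lambda(\Gamma) \le 1$. If $\Gamma$ is a Thurston obstruction, then $\lambda(\Gamma) \ge 1$, hence equality must hold in every Gr\"otzsch comparison: the essential preimage sub-annuli homotopic to $\gamma_i$ must fill $A_i$ up to a set of modulus zero. After enlarging each $A_i$ to be maximal essential in $\widehat{\mathbb C}\setminus P_R$, this produces a finite family of disjoint maximal annuli whose union is, up to isotopy rel $P_R$, completely recovered as a forward preimage under $R$.

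The two cases now split according to whether $P_R$ is finite or infinite. If $P_R$ is finite, then $\widehat{\mathbb C}\setminus P_R$ has finite type, and a maximal disjoint annular system whose preimages refill it is exceedingly restrictive: its cores are permuted (up to isotopy) by $R$, and applying the Riemann--Hurwitz formula on the orbifold $\Omega_R$ forces $\chi(\Omega_R)=0$, i.e.\ parabolicity. Combining this with the fact that $R$ actually preserves such an annular decomposition pushes $\Omega_R$ into signature $(2,2,2,2)$ with $\#P_R=4$, and $R$ lifts to an integral endomorphism of the torus double-cover ramified over $P_R$, yielding the Latt\`es case (1). If $P_R$ is infinite, the invariant annuli cannot lie in the Julia set (where expansion rules out invariant essential annular cores of positive modulus); hence each $A_i$ lies in the Fatou set. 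By Sullivan's no-wandering-domains theorem and the classification of periodic Fatou components, an invariant essential Fatou annulus of positive modulus is realized only inside a Siegel disk or a Herman ring, giving case (2).

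The main obstacle is the rigidity step: upgrading the Gr\"otzsch equality into a genuine invariant conformal structure, and extracting from it the precise dichotomy between the $(2,2,2,2)$ Latt\`es configuration and an annular system trapped in Siegel/Herman Fatou components. In the finite-$P_R$ case the delicate point is using Riemann--Hurwitz for the orbifold together with the integer structure of the induced torus endomorphism to rule out all parabolic signatures other than $(2,2,2,2)$; in the infinite-$P_R$ case the delicate point is passing from "invariant up to isotopy rel $P_R$" to "genuinely invariant", which uses Sullivan's theorem to trap each $A_i$ in a periodic rotation domain.
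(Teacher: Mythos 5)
The paper does not prove Theorem~C: it is quoted as a citation to McMullen's monograph \cite{Mc} (Appendix~B there) and used as a black box in this paper. So there is no ``paper's own proof'' to compare against; I can only measure your sketch against what is standard in the literature, which is indeed the moduli/Gr\"otzsch route you take.

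The overall architecture --- bound $\lambda(A_\Gamma)$ above by $1$ via moduli of annuli, then extract rigidity from the equality case, then split on whether $P_R$ is finite or infinite --- is McMullen's, and your identification of the delicate points is accurate. But the Gr\"otzsch step itself is not correct as you have written it. You choose an arbitrary annular neighborhood $A_i$ of $\gamma_i$ and then propose to ``apply Gr\"otzsch inside $A_i$'' to the components of $R^{-1}(A_j)$ that are homotopic to $\gamma_i$. Those components are in general not sub-annuli of $A_i$: they sit wherever $R^{-1}$ places them, and nothing forces them inside (or even to meet) the particular $A_i$ you picked. Already for $R(z)=z^2$ and a translated round annulus $A_1$, the preimage annulus homotopic to the core falls outside $A_1$. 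What is actually true is that the preimage components homotopic to $\gamma_i$ are pairwise disjoint essential annuli all homotopic to $\gamma_i$, so the correct statement is the ``parallel annuli'' form of Gr\"otzsch: the sum of their moduli is at most the supremum $M_i$ of moduli of embedded annuli homotopic to $\gamma_i$ (equivalently, lift to the $\mathbb{Z}$-cover of $\widehat{\mathbb{C}}\setminus P_R$ determined by $\gamma_i$ and compare with the covering annulus of modulus $\pi/l(\gamma_i)$). You then need $\sum_j (A_\Gamma)_{ij}\,M_j\le M_i$, which further requires that the $A_j$ can be chosen simultaneously disjoint and with moduli arbitrarily close to $M_j$; this is not automatic and deserves a sentence (Strebel/Jenkins extremal annular systems, or collars of the geodesic representatives). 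Once the inequality is stated at this level of care, the Perron--Frobenius conclusion $\lambda(A_\Gamma)\le 1$ is fine.

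The remaining rigidity analysis --- equality forcing an essentially $R$-invariant annular system, then Riemann--Hurwitz on the orbifold plus Thurston rigidity giving the $(2,2,2,2)$ Latt\`es case when $P_R$ is finite, and Sullivan's classification of periodic Fatou components trapping the annuli in rotation domains when $P_R$ is infinite --- is a correct outline, but each of these steps is itself a theorem and your sketch, as you acknowledge, is far from self-contained. In particular, passing from equality in Gr\"otzsch to a genuinely holomorphically invariant foliation (not merely invariance up to isotopy rel $P_R$) is the real content, and in the finite case ruling out the other parabolic signatures $(2,4,4)$, $(2,3,6)$, $(3,3,3)$, $(2,2,\infty)$, $(\infty,\infty)$ needs an argument that these admit no non-peripheral curve at all, not just a Riemann--Hurwitz count.
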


The reader can refer to~\cite{Mi} for a definition of a Latt\'es map and
for definitions of a Siegel disk and a Herman ring.

For a geometrically finite branched covering $f$,
the situation is much more complicated.
It was first studied in a manuscript~\cite{CJS}, which was divided into
two parts~\cite{CJS1} and~\cite{CJS2}. The first part was eventually completed and published 
in~\cite{CJ} as follows.

\vspace*{5pt}
\begin{theorem}[\cite{CJ}]~\label{CuiJiang}
There is a geometrically finite branched covering such that it
has no Thurston obstruction and it is not combinatorially equivalent to any rational map.
\end{theorem}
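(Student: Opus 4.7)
The statement asserts the existence of a counterexample, so the natural approach is explicit construction followed by two verifications: absence of a Thurston obstruction, and non-realizability as a rational map. My plan is as follows.

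\textbf{Step 1 (Construction).} I would start from a well-understood hyperbolic rational map $R$ of low degree and modify it into a geometrically finite topological model. Two canonical options are: (a) mate two sub-hyperbolic polynomials so that their post-critical sets accumulate at a shared identified fixed point; or (b) perform a local topological surgery on $R$ replacing a superattracting cycle by a parabolic petal structure, and then pre-compose with a Dehn twist $T$ supported in a punctured neighborhood of the new accumulation point that is disjoint from $P_R$ except at the accumulation point itself. In either case the resulting branched covering $f$ has $P_f$ infinite with $P_f'$ finite, so $f$ is geometrically finite. The twist parameter (rotation datum or Dehn twist power) is the tunable ingredient whose purpose is to destroy rationality.

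\textbf{Step 2 (No Thurston obstruction).} Any non-peripheral multi-curve $\Gamma$ in $S^2\setminus P_f$ avoids the finitely many points of $P_f'$. Away from small neighborhoods of $P_f'$, the topological dynamics of $f$ agrees with that of the original rational model $R$, so the Thurston transition matrix $f_\Gamma$ is comparable to $R_\Gamma$. I would then invoke Theorem~\ref{McMullen} applied to $R$ to conclude that no $f$-invariant multi-curve can have leading eigenvalue $\ge 1$. Curves hugging the accumulation locus require separate treatment: a shrinking argument using the local branched-covering structure should show that their pull-back components are either peripheral or homotoped into the bulk after finitely many iterations, so they contribute nothing to the spectral radius.

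\textbf{Step 3 (Non-realizability).} Suppose for contradiction that $f$ is combinatorially equivalent to a rational map $R'$. Since $P_{R'}'$ must be a finite forward-invariant set and $R'$ inherits the absence of Siegel/Herman data by Theorem~\ref{McMullen}, each point of $P_{R'}'$ must be parabolic. But the germ of $R'$ at such a parabolic fixed point carries a rigid conformal invariant — its multiplier rotation number together with the Ecalle--Voronin horn maps — that completely determines the local topological conjugacy class. The twist built into $f$ in Step~1 produces a local topological datum near the accumulation point that does not correspond to any parabolic germ, contradicting the existence of $R'$.

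\textbf{Main obstacle.} The core difficulty is Step~3: isolating a topological invariant that distinguishes $f$ from every rational map yet remains invisible to multi-curves. This is exactly the gap in the geometrically finite theory that the present paper later closes via the notion of CLH-equivalence, and the construction in Step~1 must be calibrated with care so that the combinatorial datum responsible for non-realizability does not inadvertently introduce an invariant multi-curve with Thurston eigenvalue $\ge 1$, which would collapse the counterexample.
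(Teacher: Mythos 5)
First, the paper cites Theorem~\ref{CuiJiang} from~\cite{CJ} and gives no proof of it here, so there is no in-paper argument to compare your proposal against; I can only evaluate the proposal on its own terms. Your Steps~1 and~2 are plausible in outline, and you have correctly located the conceptual source of the phenomenon --- some datum near $P_f'$ invisible to multi-curves --- but the mechanism you offer in Step~3 breaks down. Combinatorial equivalence (Definition~\ref{Comb}) is purely topological: it asks for homeomorphisms $\phi,\varphi$, isotopic rel~$P_f$, with $\phi\circ f=g\circ\varphi$. The Ecalle--Voronin horn maps classify parabolic germs only up to \emph{conformal} conjugacy; topologically, every parabolic germ with a fixed number of petals is conjugate to every other, so the invariant you want to read off from the germ of $R'$ at a point of $P_{R'}'$ simply does not survive the equivalence relation you are trying to obstruct. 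In addition, the claim that every point of $P_{R'}'$ ``must be parabolic'' is unsupported: Theorem~\ref{McMullen} constrains those rational maps that do carry a Thurston obstruction, whereas your hypothetical $R'$ has none, and for a geometrically finite rational map the accumulation cycles can equally well be attracting or superattracting.

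What is actually needed in Step~3 is an invariant that survives isotopy rel~$P_f$. Because $P_f$ accumulates at $P_f'$, such an isotopy pins down the infinite tail of post-critical points spiraling into the accumulation point, and the twist datum you introduce in Step~1(b) lives precisely in the relative isotopy class of that tail, not in the holomorphic modulus of a germ. A correct argument must work directly with this tail --- for instance, comparing the asymptotic winding of the $f$-orbit of a critical value about a point of $P_f'$ with the winding forced on any orbit of a rational germ --- and this is exactly the discrepancy that the semi-rational condition of~\cite{CJ} is designed to record and that plain combinatorial equivalence cannot, which is why the geometrically finite theory passes to the CLH-equivalence of Definition~\ref{CLH}. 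Your Step~2 is also underdeveloped: controlling multi-curves that nest towards $P_f'$ is the substantive part of showing $\lambda(A_\Gamma)<1$, and ``homotoped into the bulk after finitely many iterations'' needs an actual estimate; but that is a matter of calibration, whereas the Step~3 invariant is the wrong kind of object altogether.
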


Due to this theorem, a semi-rational branched covering and a sub-hyperbolic semi-rational branched covering
are introduced in~\cite{CJ} among the space of all geometrically finite branched coverings:

\vspace*{5pt}
\begin{definition}~\label{semirational}
Suppose $f: \hat{\mathbb C}\to \hat{\mathbb C}$
is a geometrically finite branched covering of degree $d\geq 2$.
We say $f$ is {\it semi-rational} if
\begin{itemize}
\item[(1)] $f$ is holomorphic in a neighborhood of $P_{f}'$;
\item[(2)] each cycle $<p_{0}, \cdots, p_{k-1}>$ of period $k\geq 1$ in $P_{f}'$ is either attractive,
that is, $0<|(f^{k})'(p_{0})|<1$, or super-attractive, that is, $(f^{k})'(p_{0})=0$,
or parabolic, that is, $|(f^{k})'(p_{0})|=1$ and $\big((f^{k})'(p_{0})\big)^{q}=1$ for some integer $q\geq 1$; and
\item[(3)] each attracting petal associated with a
parabolic cycle in $P_{f}'$ contains a point in the post-critical set $P_{f}$.
\end{itemize}
Furthermore, if all cycles in $P_{f}'$ are either attractive or
super-attractive, we call $f$ a {\it sub-hyperbolic semi-rational
} branched covering.
\end{definition}

Clearly, every geometrically finite rational map is a semi-rational branched covering. Furthermore, we have that

\vspace*{5pt}
\begin{theorem}[\cite{CJ}]~\label{equivcont}
A semi-rational branched covering $f$ is always combinatorially
equivalent to a sub-hyperbolic semi-rational branched covering $g$.
\end{theorem}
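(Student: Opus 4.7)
My plan is to construct $g$ from $f$ by a local parabolic-to-attracting surgery at each parabolic cycle in $P_f'$, then verify that the resulting branched covering is combinatorially equivalent to $f$ and is sub-hyperbolic semi-rational. Fix one parabolic cycle $\langle p_0, \ldots, p_{k-1}\rangle$ of period $k$ with multiplier $\lambda = (f^k)'(p_0)$ satisfying $\lambda^q = 1$. Condition~(1) of Definition~\ref{semirational} gives a neighborhood of the cycle on which $f$ is holomorphic, so the Leau--Fatou flower theorem provides, at each $p_i$, finitely many attracting petals for $f^{kq}$, cyclically permuted by $f^k$; by condition~(3) each such petal contains a point of $P_f$.

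I would then perform a local surgery supported in a small disk $D$ around $p_{k-1}$, chosen small enough that $D \cap P_f = \{p_{k-1}\}$ and $D$ lies in the holomorphy neighborhood of the cycle. Inside $D$ I replace $f$ by a holomorphic map $\tilde{f}$ of the same local degree at $p_{k-1}$, agreeing with $f$ on $\partial D$, but so that the new first-return map $\tilde{f}^k$ at $p_0$ has an attracting fixed point of multiplier $\mu$ with $|\mu|<1$ whose basin contains all the original attracting petals of $f^k$. Such a $\tilde f$ is built by prescribing an attracting holomorphic model on a smaller concentric disk and interpolating by a quasiconformal diffeomorphism in the annular collar; in Fatou coordinates on each petal one sees explicitly how to patch a strongly contracting model to the translation $w\mapsto w+1$. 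Performing this surgery independently at every parabolic cycle of $P_f'$ yields a branched covering $g \colon S^2 \to S^2$ of the same degree, holomorphic in an open neighborhood of $P_g'$, whose periodic accumulation orbits are exactly those of $f$ but now attracting or super-attracting rather than parabolic; in particular, $g$ is sub-hyperbolic semi-rational.

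To verify combinatorial equivalence, I would use that the surgery is supported in a pairwise disjoint union of small disks $D$, each meeting $P_f$ only at its central periodic point, with $f$ and $g$ agreeing on each $\partial D$ and carrying the same local degree at the center. Consequently, there exist homeomorphisms $\phi, \varphi \colon S^2 \to S^2$, equal to the identity off a slight enlargement of the union of surgery disks and equal to the identity on $P_f$, such that $\phi \circ f = g \circ \varphi$; the isotopy between $\phi$ and $\varphi$ is supported in the surgery collars and fixes $P_f$ pointwise, verifying Definition~\ref{Comb}.

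The main difficulty is the surgery step itself: one must simultaneously maintain holomorphy of $g$ in an \emph{open} neighborhood of $P_g'$ (not merely at the periodic points), ensure that every post-critical point of $f$ previously trapped in a parabolic petal falls into the new attracting basin so that no spurious accumulation points of $P_g$ appear, and keep the modification tame enough to produce the combinatorial-equivalence isotopy. The Leau--Fatou flower structure together with condition~(3) controls the second point by a shrinking-of-$D$ argument: each petal contains only finitely many pre-periodic $P_f$ points in a fundamental domain, and all other petal-trapped $P_f$ points lie arbitrarily close to the petal's central fixed point, hence are automatically in any reasonable attracting basin we install. Matching of boundary data then reduces the isotopy question to a standard homeomorphism-extension problem in an annulus with prescribed boundary values and local degree.
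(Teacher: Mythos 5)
The surgery idea is the right one and is essentially what the cited reference \cite{CJ} does, but one step of your outline is definitely wrong and another is under-justified.

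The false step is the claim that $\phi$ and $\varphi$ can be chosen to be \emph{the identity on $P_f$}. Fix a parabolic cycle $\langle p_0,\ldots,p_{k-1}\rangle$ and a post-critical orbit $z_0, z_1=f^{kq}(z_0), z_2=f^{2kq}(z_0),\ldots$ lying in an attracting petal at $p_0$; these points converge to $p_0$ at a polynomial rate $|z_n-p_0|\sim c\,n^{-1/\ell}$ (Leau--Fatou), where $\ell$ is the petal count. If $\phi$ and $\varphi$ were the identity on $P_f$, then the semiconjugacy $\phi\circ f = g\circ\varphi$ would force $g^{kq}(z_n)=z_{n+1}$ for all $n$, so $\{z_n\}$ would also be an orbit of the new map $g$ converging to its attracting periodic point $p_0$; but every such $g$-orbit must converge geometrically, $|z_n-p_0|\sim c\,|\mu|^{n}$ with $|\mu|<1$. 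These rates are incompatible, so $\phi|_{P_f}=\varphi|_{P_f}$ cannot be the identity on the petal orbits. In fact $P_g\neq P_f$ near every parabolic cycle, and the real content of the combinatorial-equivalence step is to construct a homeomorphism $\psi$ of the sphere sending $P_f$ to $P_g$ so that $\psi$ conjugates $f|_{P_f}$ to $g|_{P_g}$, and then to lift $\psi$ through the two coverings to get the pair $(\phi,\varphi)$ isotopic rel $P_f$. None of this is addressed by your sketch, and it is exactly where the matching between the slow parabolic orbits and the fast attracting orbits has to be made.

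The under-justified step is the assertion that the petal-trapped post-critical points ``automatically'' fall into the new basin after shrinking $D$. Your own construction makes $g$ holomorphic only on an inner disk $D'\subset D$ and merely quasiconformal on the collar $D\setminus D'$, and the first-return map near $p_{k-1}$ is not a priori a self-map of $D'$. One must simultaneously choose the nested disks, the attracting model, and the QC glue so that (i) the finitely many post-critical orbits that enter $D$ are carried through the collar into $D'$, (ii) $D'$ is forward-invariant for the first-return map, and (iii) $P_g'$ is exactly the new attracting cycle, so that $g$ is holomorphic in a neighborhood of $P_g'$ as Definition~\ref{semirational} requires. This coordination is not a consequence of merely taking $D$ small; for instance, an orbit point sitting in the annular collar is governed by the non-holomorphic interpolation, and whether it is drawn into $D'$ or pushed back out toward $\partial D$ is a design choice you have not pinned down. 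Both issues are resolvable, and the overall architecture matches the literature, but as written the proof has a genuine hole at the combinatorial-equivalence step and a real loose end in the basin argument.
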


Thus, to study the combinatorial classification in the space of all semi-rational geometrically finite branched coverings,
it is enough to study all sub-hyperbolic semi-rational branched coverings. Therefore, the CLH (combinatorially and
locally holomorphically) equivalence was introduced in~\cite{CJ} in the space of
all sub-hyperbolic semi-rational branched coverings.

\vspace*{5pt}
\begin{definition}~\label{CLH}
Suppose $f$ and $g$ are two sub-hyperbolic semi-rational branched
coverings. We say that they are CLH-equivalent if there exists a pair of
homeomorphisms $\phi,\ \varphi:  \widehat{\mathbb C}\to
\widehat{\mathbb C}$ such that
\begin{enumerate}
\item $\phi$ is isotopic to $\varphi$ rel $P_f$,
\item $\phi\circ f=g\circ \varphi$, and
\item $\phi|U_f=\varphi|U_f$ is holomorphic on some open set
$U_f\supset P_{f}'$.
\end{enumerate}
\end{definition}

We then completed the second part of study.

\vspace*{5pt}
\begin{theorem}[\cite{CJS2}\footnote{This paper was rewritten by Cui and Tan recently~\cite{CT}.}, \cite{JZ}]~\label{ZhangJiang}
Suppose $f$ is a sub-hyperbolic semi-rational branched covering.
Then $f$ is CLH-equivalent to a rational map $R$ if and only if $f$
has no Thurston obstructions. In this case, the rational map $R$ is
unique up to conjugations by automorphisms of the Riemann sphere.
\end{theorem}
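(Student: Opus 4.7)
The plan is to adapt Thurston's iteration argument from the critically finite setting (Theorem~\ref{Thurston}) to the CLH framework for sub-hyperbolic semi-rational coverings. The natural object to iterate on is a modified Teichm\"uller space $\mathcal{T}_f$ consisting of classes of Beltrami differentials $\mu$ on $\widehat{\mathbb C}$ that vanish on a prescribed open neighborhood $U_f\supset P_f'$, where $\mu_1\sim\mu_2$ whenever the corresponding quasiconformal maps are isotopic rel $P_f$ through homeomorphisms that are holomorphic on $U_f$. Because $f$ is holomorphic near $P_f'$ by Definition~\ref{semirational}(1), the Thurston pullback operator $\sigma_f\colon\mathcal{T}_f\to\mathcal{T}_f$ is well-defined, and a fixed point of $\sigma_f$ is exactly a rational map CLH-equivalent to $f$ in the sense of Definition~\ref{CLH}.

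For the necessity direction, suppose $f$ is CLH-equivalent to a rational map $R$. Any Thurston obstruction $\Gamma$ for $f$ transports through the CLH-equivalence to a Thurston obstruction for $R$, since the matrix $f_\Gamma$ is invariant under combinatorial equivalence. Theorem~\ref{McMullen} then forces $R$ to be either a critically finite Latt\`es-type map with parabolic orbifold $(2,2,2,2)$ or to possess Siegel disks or Herman rings carrying the obstructing annuli. Neither is possible for a sub-hyperbolic semi-rational $R$: every cycle in $P_R'$ is attracting or super-attracting, so the orbifold is hyperbolic and rotation domains cannot occur.

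The core of the proof is the sufficiency direction, which I would approach via the Pilgrim-style canonical-obstruction philosophy (compare Theorem~\ref{Pilgrim}). Starting from any base point $\tau_0\in\mathcal{T}_f$, one studies the orbit $\tau_n=\sigma_f^n(\tau_0)$. If this orbit converges in $\mathcal{T}_f$, its limit is a fixed point of $\sigma_f$ and gives the desired rational map. If instead the orbit diverges, one extracts a multi-curve $\Gamma$ of non-peripheral simple closed curves in $\widehat{\mathbb C}\setminus P_f$ whose hyperbolic lengths in the $\tau_n$-surfaces tend to zero; this $\Gamma$ is then shown to be $f$-stable with $\lambda(f_\Gamma)\geq 1$, hence a Thurston obstruction. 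The main obstacle is that $\widehat{\mathbb C}\setminus P_f$ is not of finite type, so Thurston's original degeneration machinery does not apply directly. The sub-hyperbolic semi-rational hypothesis is precisely what makes a reduction possible: the CLH-condition rigidifies the complex structure on $U_f$, which prevents short geodesics from accumulating at $P_f'$, so after excising a $\sigma_f$-invariant neighborhood of $P_f'$ the analysis reduces to a compact finite-type surface on which Thurston's techniques apply.

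For uniqueness, suppose $R_1$ and $R_2$ are both rational maps CLH-equivalent to $f$. Composing the equivalences yields a pair $(\phi,\varphi)$ with $\phi\circ R_1=R_2\circ\varphi$ and $\phi=\varphi$ holomorphic on some open neighborhood $U$ of $P_{R_1}'$. A standard pullback argument using the functional equation iteratively enlarges the region on which $\phi$ is known to be holomorphic, eventually covering the Fatou set of $R_1$; Sullivan's rigidity theorem for geometrically finite rational maps (no invariant line fields on the Julia set) then forces $\phi$ to extend to a M\"obius transformation conjugating $R_1$ to $R_2$.
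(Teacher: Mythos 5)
The paper does not prove Theorem~\ref{ZhangJiang}; it is cited from \cite{CJS2,JZ}, and the introduction only sketches an ``idea of a proof'' via the bounded geometry property ${\mathcal T}_{f,b}$. Your proposal is in that spirit, but the central sufficiency step is left essentially unsupported. You write that after excising a $\sigma_f$-invariant neighborhood of $P_f'$ ``the analysis reduces to a compact finite-type surface on which Thurston's techniques apply,'' and attribute this to the CLH-condition preventing short geodesics from accumulating at $P_f'$. This misplaces the difficulty: in the paper's framework the surface $\widehat{\mathbb C}\setminus Q$ is finite type from the outset (with $Q=P_1\cup\overline{D}$ a compact set containing all of $P_f$), and Thurston obstructions are defined for curves in $\widehat{\mathbb C}\setminus Q$, not in $\widehat{\mathbb C}\setminus P_f$, so the infinite-type obstacle you describe never arises. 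The genuine content of the sufficiency direction is to show that if $f$ has no Thurston obstruction, the iterates $x_n=\sigma_f^n(x_0)$ stay in ${\mathcal T}_{f,b}$ for some $b>0$ and hence converge (Lemma~\ref{converges}); your phrase ``this $\Gamma$ is then shown to be $f$-stable with $\lambda(f_\Gamma)\geq 1$'' is precisely where that work hides, and the entire apparatus of \S\S4--7 (shielding rings, the quantities $w_E$ and $w_Q$, Lemma~\ref{comparision}, the $\Gamma_0/\Gamma_\infty$ decomposition, Propositions~\ref{fundamental} and~\ref{main}) exists to control that multi-curve. Without at least naming this mechanism, the sketch restates the theorem rather than outlining a proof.

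Two of your other choices are genuinely different routes that would each work. For uniqueness you invoke Sullivan rigidity, whereas the paper's framework yields it immediately from the strict contraction $d_T(\sigma_f x,\sigma_f y)<d_T(x,y)$ of~\eqref{contraction}: a strictly contracting self-map has at most one fixed point. Your route is correct but calls on much heavier machinery than is needed. For necessity, transporting a hypothetical obstruction for $f$ to one for $R$ and dispatching it via Theorem~\ref{McMullen} is a clean and reasonable argument; you should just be careful that obstructions for $f$ are defined rel $Q$ while McMullen's theorem concerns multi-curves in $\widehat{\mathbb C}-P_R$, so you need the (elementary but nontrivial) observation that the transported multi-curve remains non-peripheral and stable in $\widehat{\mathbb C}\setminus P_R$ with the same transition matrix, which follows because $Q\supset P_f$ and each disk $D_i$ contains infinitely many points of $P_f$.
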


Thus, the study of canonical Thuston obstructions for sub-hyperbolic semi-rational branched coverings becomes
our final goal to have a complete understanding of combinatorial structures for geometrically finite branched coverings.
In this paper we will complete our final goal.

For the critically finite case, the Teichm\"uller space of the Riemann sphere
minus several points was considered in \cite{DH}. A crucial technical theorem they proved is that a sequence $\{x_n\}$ in
the Teichm\"uller space converges if and only if its projection is a precompact set in the moduli space.
The Mumford compactness theorem applies. But neither of them applies for the geometrically finite case.
Therefore, we turn to study the bounded geometry property, which makes our approach different 
from the method applied in \cite{DH}.
Roughly speaking, our main result in this paper is that
if a sub-hyperbolic semi-rational branched covering $f$ is not CLH-equivalent
to any rational map, then there exists a canonical Thurston obstruction.
To have a more precise statement of our main result, let us first give
an idea of a proof of Theorem~\ref{ZhangJiang} by using bounded geometry as follows.

Suppose $f$ is a sub-hyperbolic semi-rational branched covering.
Let $P'_f=\{a_i\}$ be the set of accumulation points of $P_{f}$. Then
every $a_i$ is periodic. There exists a collection of finite number
of open disks
\begin{equation}~\label{pd}
\Lambda=\{D_i\}
\end{equation}
centered at $\{a_i\}$ and a collection of finite number
of annuli $\{A_i\}$ (we call them the shielding rings) such that

\begin{itemize}
\item[(i)] $\overline{A_i}\cap P_f=\emptyset$;
\item[(ii)] $A_{i}\cap D_{i}=\emptyset$, but one components of $\partial A_i$ is the boundary of $D_i$;
\item[(iii)]
$(\overline{D_i\cup A_{i}})\cap (\overline{D_j\cup
A_{j}})=\emptyset\ \ \text{for}\ \ i\neq j$;
\item[(iv)] $f$ is holomorphic on $\overline{D_i}\cup A_i$; and
\item[(v)] every $f(\overline{D_i}\cup A_i)$ is contained in
$D_{i+1}$ for $1 \le i \le k-1$ and  $f(\overline{D_k}\cup A_k)$ is contained in $D_{1}$
where $k$ is the period of $a_{i}$.
\end{itemize}

Denote $D=\cup_i D_i$ and
\begin{equation}~\label{p1}
P_1=P_f\setminus D.
\end{equation}
Since $a_{i}$ are accumulation point of $P_f$, it follows that $\sharp P_1$ is finite. 
Without loss of generality, we assume that $0$, $1$, and $\infty$ belong to $P_1$.
Define
\begin{equation}~\label{defq}
Q=P_1\cup \overline{D}\ \  \text{and}\ \ X=\partial Q=P_1\cup
\partial D.
\end{equation}
We associate with $f$ the Teichmuller space $T_{f} =T(\widehat{\mathbb{C}}\setminus Q, X)$
which is the Teichm\"uller space of Riemann surface
$\widehat{\mathbb{C}}\setminus Q$ whose boundary is $X$. Note that
$T_f$ is also the Teichm\"uller space $T_{0}(Q)$ which is the space
of all $Q$-equivalent classes of all Beltrami coefficients $\mu$ on
$\widehat{\Bbb C}$ such that $\mu|Q=0$. (Two Beltrami coefficients
$\mu$ and $\nu$ are $Q$-equivalent if the normalized quasiconformal
maps $w^{\mu}$ and $w^{\nu}$ are isotopic rel $Q$.) The space $T_{f}$ is
a complex manifold. The Teichm\"uller metric and the Kobayashi
metric on $T_{f}$ are also equal (refer to, for example,~\cite{EM,GJW,JMW}).

The map $f$ induces a holomorphic map $\sigma_{f}$ from $T_{f}$ into
itself and $\sigma_{f}$ weakly contracts the Teichm\"uller metric. An equivalent statement
of Theorem~\ref{ZhangJiang} is that $\sigma_f$ has a unique fixed point
if and only if $f$ has no Thurston obstruction.

Every point $x$ in $T_{f}$ determines a complex structure
on $\widehat{\mathbb C}\setminus Q$ up to homotopy. Then $(\widehat{\mathbb C}\setminus Q, x)$
is a Riemann surface $R_x$.
We embed $R_{x}$ into the Riemann sphere $\widehat{\mathbb C}$ by a quasiconformal
map $\phi_{x}:\widehat{\mathbb C} \to \widehat{\mathbb C}$ fixing $0$, $1$, $\infty$.
Then $\widehat{\mathbb C}\setminus \phi_{x}(Q)$ is a representative of $R_{x}$. The reader can refer to \S4.

Let $d(\cdot, \cdot)$ mean the spherical distance on $\widehat{\mathbb C}$. We define a subspace
${\mathcal T}_{f,b}$ of $T_f$ for each $b>0$ as follows:

\vspace*{5pt}
\begin{definition}~\label{bgg}
Let $b>0$ be a constant. Let ${\mathcal T}_{f,b}$ be the
subspace of $x=[\mu]\in {\mathcal T}_{f}$ satisfying the following
conditions:
\begin{enumerate}
\item for all $z_{i}\not=z_{i'} \in P_{1}$,
$d(\phi_{\mu}(z_{i}), \phi_{\mu}(z_{i'}))\geq b$;
\item for all $z_{j}\in P_{1}$ and all $D_{i}\in \Lambda$,
$d(\phi_{\mu}(z_{j}), \phi_{\mu}(D_{i}))\geq b$;
\item for all $D_{i}\not= D_{i'}\in \Lambda$,
$d(\phi_{\mu}(D_{i}), \phi_{\mu}(D_{i'}))\geq b$;
\item every $D_{i}\in \Lambda$, $\phi_{\mu}(D_{i})$
contains a round disk of radius $b$ centered at $\phi_{\mu}(c_{i})$.
\end{enumerate}
We call ${\mathcal T}_{f,b}$ the subspace having the bounded
geometry property determined by $b$.
\end{definition}

Take an arbitrary $x_{0} \in T_f$ and let $x_n=\sigma_{f}^{n}(x)$.
If $f$ has no Thurston obstructions,
we can prove that $\{x_{n}\}_{n=0}^{\infty}\subset {\mathcal T}_{f,b}$ for some $b>0$.
This implies that the sequence $\{x_{n}\}_{n=0}^{\infty}$ converges in $ T_{f}$.
Thus $\sigma_{f}$ has a unique fixed point, and $f$ is CLH-equivalent to
a unique sub-hyperbolic rational map.

For a non-peripheral curve $\gamma$ in $\widehat{\mathbb C}\setminus Q$, let
$l(\gamma,x)$ denote the hyperbolic length of the unique geodesic in $R_{x}$
which is homotopic to $\gamma$ in $\widehat{\mathbb C} \setminus Q$.
If $\{x_n\}\subset {\mathcal T}_{f,b}$ for some $b>0$, then there is a $\delta>0$ such that
$l(\gamma,x_{n}) \ge \delta$ for any non-peripheral curve $\gamma$ in $\widehat{\mathbb C}\setminus Q$
and any $n \ge 0$. Therefore, if $f$ is not CLH-equivalent to a sub-hyperbolic rational map,
then there is a sequence of non-peripheral curves $\gamma_{n}$ in $\widehat{\mathbb C} \setminus Q$
such that $l(\gamma_{n},x_n) \to 0$ as $n \to \infty$.

\vspace*{5pt}
\textbf{Question:}
Suppose $f$ is not CLH-equivalent to a rational map.
Does there exist a non-peripheral curve $\gamma$,
such that for any $x_0\in T_f$ and
$x_n=\sigma_f^n(x_0)$, $n>0$,
$l(\gamma,x_n)\to 0$ as $n\to \infty$?

\vspace{3pt}
We gives an affirmative answer to this question.
The positive answer to this question gives a way how $x_{n}$ tends to the boundary of $T_{f}$.
More precisely, we will prove a stronger result as follows.

\vspace*{5pt}
\addtocounter{theorem}{-6}
\renewcommand{\thetheorem}{\arabic{theorem}}
\begin{theorem}[Main Theorem]~\label{ChenJiang}
Suppose $f$ is a sub-hyperbolic semi-rational branched covering.
Let $\Gamma_c$ denote the set of all homotopy
classes of non-peripheral curves $\gamma$ in $\widehat{\mathbb C} \setminus Q$
such that $l(\gamma, x_{n})\rightarrow 0$ as $n\rightarrow \infty$ for any
initial $x_0\in T_{f}=T_{0} (Q)$.
Then we have that either
\begin{itemize}
\item[(a)] $\Gamma_c=\emptyset$, then $f$ is CLH-equivalent
to a sub-hyperbolic rational map, or
\item [(b)] $\Gamma_c\not= \emptyset$ is a Thurston
obstruction for $f$ and $f$ is not CLH-equivalent to a rational map. In this
case, we call $\Gamma_{c}$ the canonical Thurston obstruction for $f$.
\end{itemize}
\end{theorem}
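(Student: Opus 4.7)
My plan is to adapt the proof of Pilgrim's Theorem \ref{Pilgrim} (the critically finite case) to the sub-hyperbolic semi-rational setting, systematically replacing the Douady--Hubbard compactness argument on moduli space by the bounded-geometry criterion of \cite{CJ,JZ} summarized in the introduction. A useful preliminary observation is that the definition of $\Gamma_c$ does not depend on the base point: if $x_0, x_0' \in T_f$, then the weak contraction of $\sigma_f$ in the Teichm\"uller metric gives $d_T(\sigma_f^n(x_0), \sigma_f^n(x_0')) \leq d_T(x_0, x_0')$, and Wolpert's inequality $|\log l(\gamma,x) - \log l(\gamma, y)| \leq 2 d_T(x,y)$ shows that $l(\gamma, x_n)\to 0$ for one initial point forces the same for every initial point. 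So I may fix $x_0$ once and for all.

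Suppose first that $\Gamma_c \neq \emptyset$. I would show that any two $\gamma, \gamma' \in \Gamma_c$ can be represented by disjoint non-peripheral simple closed curves using the Collar Lemma: the standard collar about a simple closed geodesic of length $\ell$ has width comparable to $\log(1/\ell)$, and any other simple closed geodesic crossing it essentially must be at least as long as the collar width, so two curves of lengths simultaneously tending to zero cannot intersect essentially. Finiteness of $\Gamma_c$ as a multicurve then follows from the classical topological bound (Margulis lemma / Bers constant) on the number of simple closed geodesics of length below a uniform threshold on a hyperbolic surface of fixed finite topological type, namely $\widehat{\mathbb C} \setminus Q$.

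Next, I would verify $f$-stability. Given $\gamma \in \Gamma_c$ and a non-peripheral component $\gamma'$ of $f^{-1}(\gamma)$ covering $\gamma$ with degree $d_{\gamma'}$, the standard collar about $\gamma$ in $R_{x_n}$ has modulus comparable to $\pi/l(\gamma, x_n) \to \infty$; its $f$-preimage component about $\gamma'$ in $R_{x_{n+1}}$ is again an annulus, whose modulus is at least $1/d_{\gamma'}$ times that of the collar, hence also tends to infinity, forcing $l(\gamma', x_{n+1}) \to 0$ and so $\gamma' \in \Gamma_c$. For the spectral condition $\lambda(f_{\Gamma_c}) \geq 1$, I would argue by contradiction: assume $\lambda < 1$. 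Combining the lower bound $m(\gamma, \sigma_f(x)) \geq \sum_\delta f_{\Gamma_c}(\gamma, \delta)\, m(\delta, x)$ (obtained by assembling the $f$-preimage collars) with a reverse inequality furnished by the bounded geometry of $R_{x_n}$ on the complement of $\Gamma_c$ (by maximality of $\Gamma_c$ no other curves become short), the weighted sum $a_n = \sum_\gamma r_\gamma\, m(\gamma, x_n)$ taken against the Perron right eigenvector $r$ of $f_{\Gamma_c}$ satisfies a recursion of the form $a_{n+1} \leq \lambda a_n + C$ with $C$ a topological constant. Iterating yields $a_n$ bounded, contradicting $m(\gamma, x_n)\to\infty$.

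Finally, for case (a): if $\Gamma_c = \emptyset$ but $\{x_n\}$ left every subspace ${\mathcal T}_{f,b}$, then along a subsequence some simple closed geodesic on $R_{x_{n_k}}$ would have length tending to zero; by the finiteness of short multicurves and the pigeonhole principle, a single homotopy class $\gamma$ would persist as short along a subsequence, and the Wolpert/weak-contraction argument from the first paragraph would promote this to $\gamma \in \Gamma_c$, contradicting emptiness. Hence $\{x_n\} \subset {\mathcal T}_{f,b}$ for some $b > 0$, and Theorem \ref{ZhangJiang} gives that $f$ is CLH-equivalent to a rational map. The main obstacle I anticipate is the spectral step, specifically establishing the reverse modulus inequality in the sub-hyperbolic semi-rational setting: the boundary circles $\partial D_i$ and shielding rings $A_i$ introduce topology near $P_f'$ with no counterpart in the critically finite framework of \cite{Pi,DH}, so bounded geometry on the complement of $\Gamma_c$ must be carefully propagated across these shielding structures before the Perron--Frobenius contradiction can be closed.
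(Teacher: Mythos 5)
Your plan takes a genuinely different route from the paper, and that route has two identifiable gaps, one of which you flag yourself and one of which you do not.

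The paper does not argue by a direct Perron--Frobenius contradiction on $\Gamma_c$. Instead, its central device is the decomposition of an $f$-stable multicurve into a finite-depth part $\Gamma_0$ and an infinite-depth part $\Gamma_\infty$ (Definition~\ref{depth}, Lemma~\ref{decomp}). The infinite-depth part has spectral radius strictly below one automatically, and the whole of Section~5 is devoted to proving that curves in $\Gamma_\infty$ can never become arbitrarily short along the orbit (Proposition~\ref{fundamental}, Lemma~\ref{lbcurve}). Conversely, Section~6 proves that once a finite-depth curve $\gamma\in\Gamma_0$ is short enough, the sequence $w_Q(\gamma,x_n)$ is $\kappa$-quasi-nondecreasing (Lemma~\ref{gamma}, Proposition~\ref{main0}). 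Combining these, any curve that becomes arbitrarily short along a subsequence is forced into $\Gamma_0$, and quasi-nondecreasing plus unbounded then gives convergence to $\infty$ for the \emph{entire} sequence (Property~\ref{pr2}). The canonical obstruction is then identified as $\Gamma_u = \Gamma_{J_c,x_{n_c},0}$, which is a Thurston obstruction essentially by construction, with no need to verify $\lambda\ge 1$ for $\Gamma_c$ directly.

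This exposes the two gaps. First, for the spectral step you want the reverse inequality $a_{n+1}\le\lambda a_n + C$; you correctly observe that establishing the reverse modulus bound with a uniform constant is the hard part, but it is not merely a matter of propagating bounded geometry across the shielding rings. There can be curves that become short on $R_{x_n}$ for some $n$ yet do not have lengths tending to zero; these are precisely the infinite-depth curves of the paper, and they enter the modulus inequality of Lemma~\ref{lip} as extra terms that must be controlled. The paper controls them through Lemma~\ref{eps} (comparing lengths relative to $E$, $P_2$, and $f^{-m}(P_2)$) together with the contraction $\|A^m_{\Gamma_\infty}\|<1/2$ of Proposition~\ref{one half}; without this decomposition the constant $C$ is not uniform. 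Second, and not acknowledged in your proposal, the promotion step in case (a) fails as stated. Wolpert's inequality combined with the weak contraction of $\sigma_f$ only gives $|\log l(\gamma,x_{n+1})-\log l(\gamma,x_n)|\le 2\,d_T(x_0,x_1)$, a bound on the step-to-step variation; it does not turn ``$l(\gamma,x_{n_k})\to 0$ along a subsequence'' into ``$l(\gamma,x_n)\to 0$,'' since the length is free to recover between consecutive subsequence indices. The paper's quasi-nondecreasing property of $w_Q(\gamma,\cdot)$ for finite-depth curves is exactly what closes this gap, and you would need some analogue of it to complete your argument.
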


\vspace*{5pt}
The paper is organized as follows.
In \S2, we define Thurston obstructions for sub-hyperbolic semi-rational branched coverings.
In \S3, we review non-negative matrices and study some properties for an irreducible non-negative matrices.
In \S4, we study the Teichm\"uller space associated with a sub-hyperbolic semi-rational branched covering and short geodesics.
For any Thurston obstruction $\Gamma$, we can decompose it into $\Gamma_{0}$ and $\Gamma_{\infty}$ (see Definition~\ref{depth}).
We estimate the upper bound for $\Gamma_{\infty}$ in \S5 and the lower bound for $\Gamma_{0}$ in \S6.
Finally, we prove Theorem~\ref{ChenJiang} in \S7.

\vspace*{20pt}
\noindent {Acknowledgement:} We would like to thank Professors Zhe Wang,
Xiaojun Huang, and Yunchun Hu for many endless hours to listen to our talks
and to help to check arguments and to provide extremely useful comments.
We would like to thank Professors Sudeb Mitra and Linda Keen to read our initial version of this paper and to provide
usefull suggestions and comments.

\vspace*{5pt}

\section{Thurston obstructions}

Suppose $f$ is a sub-hyperbolic semi-rational branched covering.
Let $Q$ be the set as we defined in (\ref{defq}). Then
$$
f: \widehat{\mathbb{C}}\setminus f^{-1}(Q)\longrightarrow  \widehat{\mathbb{C}}\setminus Q
$$
is a covering map of finite degree.
If $\gamma$ is a simple closed curve in $ \widehat{\mathbb{C}}\setminus Q$,
then all the components of $f^{-1}(\gamma)$ are simple closed
curves in $ \widehat{\mathbb{C}}\setminus f^{-1}(Q)$, which is a subset of $ \widehat{\mathbb{C}}\setminus Q$.
Thus all the components of $f^{-1}(\gamma)$ are simple closed
curves in $ \widehat{\mathbb{C}}\setminus Q$.

A simple closed curve $\gamma$ is said to be \emph{non-peripheral}
if each component of $ \widehat{\mathbb{C}}\setminus \gamma$ contains at least two points
of $Q$. A \emph{multi-curve}
\begin{equation}~\label{muti}
\Gamma=\{\gamma_1,\ \cdots,\ \gamma_n\}
\end{equation}
is a set of finitely many pairwise disjoint, non-homotopic,
and non-peripheral curves in $ \widehat{\mathbb{C}}\setminus Q$.
For each multi-curve $\Gamma$ in (\ref{muti}), let
$$
\mathbb{R}^{\Gamma}=<\gamma_1, \cdots, \gamma_{n}>
$$
be the real vector space of dimension $n$ with a basis $\Gamma$.
We define a linear transformation
$$
f_\Gamma: \mathbb{R}^\Gamma\rightarrow \mathbb{R}^\Gamma
$$
as follows: For each
$\gamma_{j}\in \Gamma$, let $\gamma_{i,j,\alpha}$ denote the
components of $f^{-1}(\gamma_j)$ homotopic to $\gamma_i$ in
$ \widehat{\mathbb{C}}\setminus Q$ and $d_{i,j,\alpha}$ be the degree
of $f|_{\gamma_{i,j,\alpha}}:\ \gamma_{i,j,\alpha}\rightarrow
\gamma_j$. Define
$$
f_{\Gamma}(\gamma_j)=\Sigma_i \Big( \Sigma_\alpha \frac{1}{d_{i,j,\alpha}} \Big)\gamma_i.
$$
Let $A_\Gamma$ be the corresponding matrix, that is
$$
f_{\Gamma} {\bf v} =A_{\Gamma} {\bf v}, \quad {\bf v}\in \mathbb{R}^{\Gamma}.
$$
Since the matrix $A_{\Gamma}$ is non-negative, by the Perron-Frobenius Theorem,
there exists a maximal non-negative eigenvalue $\lambda(A_\Gamma)$ which
is the spectral radius of $A_\Gamma$.

A multi-curve $\Gamma$ is said to be \emph{$f$-stable} if for any $\gamma\in\Gamma$, every
non-peripheral component of $f^{-1}(\gamma)$ is homotopic to an
element of $\Gamma$ rel $Q$.

\begin{definition}
A stable multi-curve $\Gamma$ is called a \emph{Thurston obstruction} for $f$
if $\lambda(A_\Gamma)\geq 1$.
\end{definition}

\begin{remark}
The definition of a Thurston obstruction for the critically finite case is similar by replacing $Q$ by $P_{f}$.
\end{remark}

\section{Non-negative matrices}
Since a Thurston obstruction is determined by a non-negative matrix, we give a brief review
of some results in the matrix theory about non-negative matrices.
We use~\cite{Ga} as a reference.

A non-negative $n\times n$ matrix $A$ is called \emph{irreducible},
if no permutation of the indices places the matrix in a
block lower-triangular form. More precisely, there is no permutation matrix
$P$, which is a matrix consisting of $0$ and $1$ such that each row or
each column contains one and only one $1$, such that
$$
PAP^{-1}=\begin{pmatrix} A_{11} & 0\\
                         A_{21} & A_{22}
         \end{pmatrix},
$$
where $A_{11}$ and $A_{22}$ are square matrices. An equivalent definition of irreducibility is that for
any $1\leq i,\ j\leq n$, there exists a $0\leq q=q(i,j)\leq n$ such that
the $ij$-th entry of $A^{q}$ is positive.

For the $n$-dimensional vector space ${\mathcal V}$, we will use the norm
\begin{equation}~\label{norm}
\| {\bf v}\| =\max_{1\leq i\leq n} |v_{i}|, \quad {\bf v} =(v_{1}, \cdots, v_{n})\in {\mathcal V}
\end{equation}
in the rest of this paper.
For any linear map $L: {\mathcal V}\to {\mathcal V}$, let $A$ be the corresponding matrix for $L$,
define
$$
\| A\| =\sup_{\|{\bf v}\|=1} \|A{\bf v}\|.
$$
The spectral radius $\lambda (A)$ of $A$ can be calculated as
$$
\lambda (A) = \lim_{n\to \infty} \sqrt[n]{||A^{n}||}\geq 0.
$$

If $A$ is a non-negative, the Perron-Frobenius Theorem implies that
$\lambda(A)$ is an eigenvalue of $A$. Thus it is a maximal eigenvalue of $A$.
If $A$ is irreducible, $\lambda (A)$ is a simple, positive, maximal eigenvalue with a positive eigenvector
${\bf v} =(v_{1},\cdots, v_{n})$, i.e, $v_{i}>0$ for all $1\leq i\leq n$.
However, there may exist another eigenvalue $\mu\not=\lambda(A)$ but $|\mu|=\lambda (A)$.
For example, consider
$$
A=\begin{pmatrix} 0 & 1\\
                  1 & 0
  \end{pmatrix}.
$$
It is an irreducible matrix. The spectral radius is $1$ which is a simple, positive, maximal eigenvalue
with an eigenvector ${\bf v}_{1}=(1,1)$. However, $-1$ is also an eigenvalue with an eigenvector ${\bf v} =(1,-1)$.
But if $A$ is positive, that is, every entry is a positive number, the Perron-Frobenius theorem states that $\lambda (A)$ is a unique, simple, positive,
maximal eigenvalue with a positive eigenvector ${\bf v} =(v_{1},\cdots, v_{n})$, i.e, $v_{i}>0$ for all $1\leq i\leq n$. Here the term ``unique" means
that all other eigenvalues $\mu$ of $A$ satisfy that
$$
|\mu|<\lambda (A).
$$

\vspace*{5pt}
\begin{definition}~\label{red}
We say that a multi-curve $\Gamma$ is \emph{irreducible} if the corresponding
matrix $A_{\Gamma}$ of the linear map $f_{\Gamma}: {\mathbb R}^{\Gamma}\to {\mathbb R}^{\Gamma}$ is irreducible.
\end{definition}

For any non-negative matrix $A$, we can rearrange the order of the basis such that
\begin{equation}~\label{irredform}
A=\begin{pmatrix} A_{11} & 0 & \cdots & 0\\
A_{21}& A_{22}& \cdots & 0\\
\cdots & \cdots & \cdots& \cdots\\
A_{s1}& A_{s2}& \cdots & A_{ss}
\end{pmatrix}
\end{equation}
and all the blocks $A_{jj}$ on the diagonal are either irreducible or $0$ matrices.
It is not hard to calculate that
$$
\lambda(A)=\text{max}_{j}\lambda(A_{jj}).
$$

Now we consider $A=A_\Gamma$ as the corresponding matrix
of the linear map $f_{\Gamma}: {\mathbb R}^{\Gamma}\to
{\mathbb R}^{\Gamma}$ for an $f$-stable multi-curve $\Gamma=\{ \gamma_{1}, \cdots, \gamma_{n}\}$.
We assume that $A$ is in the form of (\ref{irredform}). Then we can use $\Gamma_j$ to denote
the subset of curves in $\Gamma$ corresponding to the $j$-th block in $A$. That is,
$A_{jj}=A_{\Gamma_j}$. It is clear that
$$
\Gamma=\cup_{j} \Gamma_{j}.
$$
We call $\{\Gamma_j\}$ irreducible decomposition of $\Gamma$. The reader should note that
$\Gamma_{j}$ may not be $f$-stable.

Denote
$$
\Gamma_{Ob}=\cup_{j}\Gamma_j
$$
where the union runs over all $j$ such that $\lambda(A_{jj})\geq 1$.
We have the following definition to relate every element in $\Gamma$ to $\Gamma_{Ob}$ if it is not empty.

\vspace*{5pt}
\begin{definition}~\label{depth}
Suppose $\Gamma$ is an $f$-stable multi-curve. For every $\gamma \in \Gamma$, if
there exists a $\gamma_{ob}\in \Gamma_{Ob}$ and an integer $k\geq 0$ such
that $\gamma$ is homotopic to a component $f^{-k}(\gamma_{ob})$, then we define
the depth of $\gamma$ with respect to $\Gamma$ to be the least
such integer $k$. Otherwise, we define the depth as $\infty$.
The set of all elements in $\Gamma$ with finite depth is denoted by $\Gamma_0$.
The set of all elements with infinite depth is denoted by $\Gamma_\infty$.
\end{definition}

Then
$$
\Gamma =\Gamma_{0}\cup \Gamma_{\infty}.
$$
It is clear that if $\Gamma$ is a Thurston obstruction, then $\Gamma_{0}$ is non-empty.
Moreover, we have

\vspace*{5pt}
\begin{lemma}\label{decomp}
If $\Gamma$ is a Thurston obstruction, then $\Gamma_0$ is also a
Thurston obstruction. In particular, under a permutation of the basis, we can write
\begin{equation}~\label{form}
A_\Gamma=\begin{pmatrix}
                 A_{\Gamma_\infty} & 0 \\
                 \bigstar& A_{\Gamma_0}
           \end{pmatrix}
\end{equation}
where $\lambda(A_{\Gamma_\infty})<1$ and
$\lambda(A_\Gamma)=\lambda(A_{\Gamma_0})\geq 1$.
\end{lemma}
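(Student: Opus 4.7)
The plan is to establish three claims in order: first that $\Gamma_0$ is itself $f$-stable, second that this forces the block lower-triangular form (\ref{form}) with a zero upper-right block, and third that $\lambda(A_{\Gamma_\infty})<1\leq \lambda(A_{\Gamma_0})$.

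For $f$-stability of $\Gamma_0$, I would take any $\gamma\in \Gamma_0$ and any non-peripheral component $\gamma'$ of $f^{-1}(\gamma)$. Since $\Gamma$ itself is $f$-stable, $\gamma'$ is homotopic rel $Q$ to some $\gamma_i\in \Gamma$. Since $\gamma\in \Gamma_0$, there exist $\gamma_{ob}\in \Gamma_{Ob}$ and $k\geq 0$ such that $\gamma$ is homotopic to a component of $f^{-k}(\gamma_{ob})$; pulling back once more, $\gamma_i$ is homotopic to a component of $f^{-(k+1)}(\gamma_{ob})$ and therefore has depth at most $k+1$, placing it in $\Gamma_0$.

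The matrix form is then immediate. The $(i,j)$ entry of $A_\Gamma$ is positive exactly when some component of $f^{-1}(\gamma_j)$ is homotopic to $\gamma_i$. The $f$-stability of $\Gamma_0$ just established says that whenever $\gamma_j\in \Gamma_0$ no such $\gamma_i$ lies in $\Gamma_\infty$, so listing $\Gamma_\infty$ first and $\Gamma_0$ second in the basis produces a zero block in the upper-right corner, as displayed in (\ref{form}).

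For the spectral radii, the key observation is that each irreducible block $\Gamma_j$ appearing in the decomposition (\ref{irredform}) lies entirely in $\Gamma_0$ or entirely in $\Gamma_\infty$. Indeed, irreducibility of $A_{jj}$ means that within $\Gamma_j$ every curve is reachable from every other via the graph of iterated preimages in $\Gamma$, so if one element has finite depth then all of $\Gamma_j$ does. Every block inside $\Gamma_{Ob}$ is by definition contained in $\Gamma_0$, so any block contained in $\Gamma_\infty$ must have $\lambda(A_{jj})<1$; this gives $\lambda(A_{\Gamma_\infty})=\max_{\Gamma_j\subset\Gamma_\infty}\lambda(A_{jj})<1$. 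On the other hand, $\lambda(A_\Gamma)=\max_j\lambda(A_{jj})\geq 1$ by the obstruction hypothesis, and the block achieving this maximum lies in $\Gamma_0$; combined with the block lower-triangular form this yields $\lambda(A_\Gamma)=\lambda(A_{\Gamma_0})\geq 1$, so $\Gamma_0$ is also a Thurston obstruction. The whole argument is a short bookkeeping exercise once $f$-stability of $\Gamma_0$ is secured; the only point demanding a little care is to keep the direction of the preimage graph straight, since edges in $A_\Gamma$ go from a curve to its preimages, which is precisely the direction in which depth propagates.
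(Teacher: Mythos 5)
Your proof is correct and takes essentially the same route as the paper's: first show $\Gamma_0$ is $f$-stable by pulling back the finite-depth witness, then deduce the block lower-triangular form (\ref{form}), then compare spectral radii using $\Gamma_{Ob}\subset\Gamma_0$. The paper is terser at the last step; you make explicit the supporting observation, implicit there, that each irreducible block $\Gamma_j$ of the decomposition (\ref{irredform}) lies entirely in $\Gamma_0$ or entirely in $\Gamma_\infty$ because finite depth propagates through the strongly connected preimage graph of an irreducible block.
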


\begin{proof} First, for every curve $\gamma\in \Gamma_0$, there exists an integer $k\geq 0$ and an
element $\gamma_{ob}\in \Gamma_{Ob}$ such that $\gamma$ is homotopic to a
component of $f^{-k}(\gamma_{ob})$. It follows that any non-peripheral
component $\widetilde{\gamma}$ of $f^{-1}(\gamma)$ is homotopic to a
component of $f^{-(k+1)}(\gamma_{ob})$.
Since $\Gamma$ is $f$-stable,
then there exists an element $\gamma_i\in \Gamma$ which is homotopic
to $\widetilde{\gamma}$. Therefore, any non-peripheral
component of $f^{-1}(\gamma)$ is homotopic to an element
$\gamma_i\in \Gamma$ whose depth is at most $k+1$. This implies that $\gamma\in \Gamma_0$.
Thus $\Gamma_0$ is $f$-stable.

Let us write $\Gamma_{\infty}=\{\gamma_{1}, \cdots, \gamma_{s}\}$. Then $\Gamma_{0}=\{ \gamma_{s+1}, \cdots, \gamma_{n}\}$. Since $\Gamma_{0}$ is $f$-stable, $A_\Gamma$ must be of the form of (\ref{form}).
Furthermore, since $\Gamma_{Ob}\subset \Gamma_0$, we have that
$$
\lambda(A_{\Gamma_\infty})< 1 \ \ \text{and}\ \
\lambda(A_{\Gamma_0})=\lambda({A_\Gamma})\geq 1.
$$
\end{proof}

Now we study the associated matrix $A$ for a sub-hyperbolic
semi-rational branched covering $f$. For each disk $D_i$ in $\Lambda$,
we take a point $b_i$ on the boundary $\partial D_i$. Set
\begin{equation}~\label{defne}
E=P_1\cup \cup_i\{a_i, b_i\}.
\end{equation}
Let $p=\sharp E$. It is obvious that every multi-curve
$\Gamma $ in $\widehat{\mathbb C}\setminus Q$ is a multi-curve in
$\widehat{\mathbb C}\setminus E$. It follows that there are only finite number of
possible matrices for all linear transformations $f_{\Gamma}$ (refer to~\cite[Lemma 1.2]{DH}).
(There are infinitely many possible multi-curves $\Gamma$.)
Therefore, we have that

\vspace*{5pt}
\begin{proposition}~\label{beta}
There is a number $0< \beta\leq 1$ depending only on the degree $d$ of $f$
and the cardinality $p$ of $E$ such that for any irreducible multi-curve
$\Gamma$ in $\widehat{\mathbb{C}}\setminus Q$ (not necessarily
$f$-stable) with $\lambda(A_{\Gamma})\geq 1$, let ${\bf v}$ be the
unique positive eigenvector of $A_\Gamma$ corresponding to
$\lambda(A_{\Gamma})\geq 1$ with $\|{\bf v}\|=1$, then
the smallest coordinate of ${\bf v}$ is bounded below by $\beta$.
\end{proposition}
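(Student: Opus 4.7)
The plan is to reduce the statement to a finiteness assertion about the set of possible matrices $A_\Gamma$ and then take the minimum of a finite collection of strictly positive quantities produced by Perron--Frobenius.

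First, I would observe that $E\subset Q$: the center $a_i$ lies in $\overline{D_i}\subset\overline{D}$, and each chosen boundary point $b_i$ also lies in $\overline{D}$. Consequently every multi-curve $\Gamma$ in $\widehat{\mathbb C}\setminus Q$ is a multi-curve on the $p$-punctured sphere $\widehat{\mathbb C}\setminus E$. Any multi-curve on such a sphere has at most $p-3$ elements, so the size of $A_\Gamma$ is bounded in terms of $p$. Each entry $(A_\Gamma)_{ij}=\sum_\alpha 1/d_{i,j,\alpha}$ is a sum of reciprocals of positive integers in $\{1,\dots,d\}$, with the number of summands itself controlled by $d$ and $p$. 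Invoking \cite[Lemma 1.2]{DH}, I conclude that the collection $\mathcal M(d,p)$ of all matrices that can arise as $A_\Gamma$ for some multi-curve in $\widehat{\mathbb C}\setminus E$ is \emph{finite}.

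Next, I would restrict attention to the subfamily
$$
\mathcal M_{\mathrm{irr}}(d,p)=\bigl\{A\in\mathcal M(d,p):\ A\text{ is irreducible and }\lambda(A)\ge 1\bigr\},
$$
which is still finite. By the Perron--Frobenius theorem for irreducible non-negative matrices (recalled in \S3), every $A\in\mathcal M_{\mathrm{irr}}(d,p)$ has $\lambda(A)$ as a simple, positive, maximal eigenvalue with a strictly positive eigenvector; after normalizing in the sup-norm of (\ref{norm}) this eigenvector is the unique ${\bf v}_A$ with $\|{\bf v}_A\|=1$, and the quantity
$$
\beta_A:=\min_i ({\bf v}_A)_i
$$
is strictly positive.

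Finally, I would set
$$
\beta:=\min\bigl\{\beta_A:\ A\in\mathcal M_{\mathrm{irr}}(d,p)\bigr\}.
$$
As a minimum over a finite set of positive numbers, $\beta>0$; since each ${\bf v}_A$ has sup-norm $1$, clearly $\beta\le 1$. By construction, $\beta$ depends only on $d$ and $p$, and for any irreducible multi-curve $\Gamma$ with $\lambda(A_\Gamma)\ge 1$, the normalized positive Perron eigenvector has all coordinates $\ge \beta$. The only non-routine ingredient is the finiteness of $\mathcal M(d,p)$; this is supplied by \cite[Lemma 1.2]{DH}, and once it is in hand the remainder is a standard Perron--Frobenius-plus-pigeonhole argument.
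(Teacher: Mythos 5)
Your proposal is correct and follows essentially the same route as the paper: observe $E\subset Q$ so every multi-curve in $\widehat{\mathbb C}\setminus Q$ lives on the $p$-punctured sphere $\widehat{\mathbb C}\setminus E$, invoke the finiteness of possible matrices $A_\Gamma$ via \cite[Lemma 1.2]{DH}, and take the minimum of the (finitely many) strictly positive smallest coordinates of the normalized Perron eigenvectors. The paper's proof is a compressed version of exactly this argument.
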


\begin{proof} Since there are only finitely many possible matrices for all irreducible multi-curves,
there are finitely many simple, positive, maximal eigenvalues. Thus there are finitely many
positive eigenvectors ${\bf v}$ with $\|{\bf v}\|=1$. This gives the proposition.
\end{proof}

\vspace*{5pt}
\begin{proposition}\label{one half}
There exists a positive integer $m$ such that for any non-empty $f$-stable multi-curve $\Gamma$, if it is a Thurston obstruction,
$$
||A^m_{\Gamma_\infty}||<1/2.
$$
\end{proposition}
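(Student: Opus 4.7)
The plan is to exploit two facts already in hand: the spectral inequality $\lambda(A_{\Gamma_\infty}) < 1$ from Lemma~\ref{decomp}, and the finiteness principle cited just before Proposition~\ref{beta} (\cite[Lemma 1.2]{DH}), asserting that although $f$-stable multi-curves $\Gamma$ in $\widehat{\mathbb{C}}\setminus Q$ form an infinite family, the associated matrices $A_\Gamma$ take only finitely many values, with a bound depending solely on $d$ and $p=\sharp E$.

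First I would observe that the block decomposition of $A_\Gamma$ displayed in (\ref{form}) is determined by the matrix $A_\Gamma$ alone. The irreducible decomposition (\ref{irredform}) is a purely linear-algebraic operation, and by Definition~\ref{depth} membership in $\Gamma_\infty$ is dictated just by which diagonal blocks $A_{jj}$ carry $\lambda(A_{jj})\geq 1$, together with the reachability pattern among the blocks encoded by the nonzero off-diagonal entries. All of this data is visible in $A_\Gamma$ itself. Consequently, as $\Gamma$ ranges over all $f$-stable Thurston obstructions, the sub-matrices $A_{\Gamma_\infty}$ lie in a finite collection $\mathcal{M}$, every element of which satisfies $\lambda < 1$ by Lemma~\ref{decomp}.

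Next I would fix $A \in \mathcal{M}$. Gelfand's formula (stated in \S3 for the norm (\ref{norm}))
$$
\lambda(A) = \lim_{n\to\infty}\|A^n\|^{1/n} < 1
$$
combined with any choice of $\epsilon$ satisfying $\lambda(A)+\epsilon<1$ produces an integer $m_A$ such that $\|A^n\|\leq (\lambda(A)+\epsilon)^n < 1/2$ for every $n\geq m_A$. Finally, since $\mathcal{M}$ is finite, I would set $m = \max_{A\in\mathcal{M}} m_A$; then $\|A_{\Gamma_\infty}^m\|<1/2$ uniformly in $\Gamma$, as required. The only conceptual step that deserves care is the verification that $\Gamma_\infty$ is a matrix-theoretic invariant rather than a strictly curve-theoretic one; beyond that, the argument is simply the standard passage from pointwise to uniform decay across a finite family of contractions, and I anticipate no substantive obstacle.
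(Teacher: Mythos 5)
Your proof is correct and follows essentially the same argument as the paper: finitely many possible matrices $A_{\Gamma_\infty}$, each with spectral radius strictly less than $1$ by Lemma~\ref{decomp}, hence by Gelfand's formula a uniform power $m$ suffices. Your worry about whether $\Gamma_\infty$ is a matrix-theoretic invariant is a reasonable one to raise (and your reachability argument resolves it), but it is not strictly necessary: since every $\Gamma$ has at most $p-3$ elements and $\Gamma_\infty\subset\Gamma$, the collection of possible principal submatrices $A_{\Gamma_\infty}$ is finite simply because the collection of possible $A_\Gamma$ and the collection of index subsets are both finite, whether or not the particular subset $\Gamma_\infty$ is recoverable from $A_\Gamma$ alone.
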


\begin{proof}
Since there are only finitely many matrices $A_{\Gamma}$ corresponding to all $\Gamma$, there are only finitely many
$A_{\Gamma_{\infty}}$. For each $A_{\Gamma_{\infty}}$, $\lambda (A_{\Gamma_{\infty}})<1$. Thus we have an integer $m>0$ such that
$$
||A^m_{\Gamma_\infty}||<1/2.
$$
\end{proof}

Every multi-curve $\Gamma$ can contain at most $p-3$ curves, so we have that

\vspace*{5pt}
\begin{proposition}~\label{upperbound}
There is a positive integer $M$ depending on $p$ such that for any
$f$-stable multi-curve $\Gamma$ in $\widehat{\mathbb{C}}\setminus Q$, the depth of
every $\gamma\in \Gamma_{0}$ is less than or equal to $M$.
\end{proposition}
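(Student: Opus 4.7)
The plan is to show that, in a chain witnessing the depth of $\gamma\in\Gamma_0$, the curves must be pairwise non-homotopic in $\widehat{\mathbb C}\setminus Q$, and then to bound the length of the chain by the number of isotopy classes of simple closed curves in $\widehat{\mathbb C}\setminus Q$, a finite quantity depending only on $p$.

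Concretely, I would fix $\gamma\in\Gamma_0$ realizing depth $k$: $\gamma$ is homotopic rel $Q$ to a component $\beta_k$ of $f^{-k}(\gamma_{ob})$ for some $\gamma_{ob}\in\Gamma_{Ob}$, with $k$ minimal. Setting $\beta_i=f^{k-i}(\beta_k)$ gives a chain $\beta_0=\gamma_{ob},\beta_1,\ldots,\beta_k$ with $f(\beta_{i+1})=\beta_i$. The key step is a shortcut argument via covering-space isotopy lifting. Because $f(C_f)\subset P_f\subset Q$, the restriction $f:\widehat{\mathbb C}\setminus f^{-1}(Q)\to\widehat{\mathbb C}\setminus Q$ is an unramified covering, so any isotopy of simple closed curves in the base lifts to one in the cover that carries components of preimages bijectively to components of preimages. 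If $\beta_i\simeq\beta_j$ rel $Q$ with $0\le i<j\le k$, applying the lift to an isotopy $\beta_j\to\beta_i$ moves $\beta_{j+1}$ to a component $\tilde\beta_{i+1}$ of $f^{-1}(\beta_i)$ homotopic to $\beta_{j+1}$; iterating produces $\tilde\beta_{i+l}\simeq\beta_{j+l}$ as a component of $f^{-(i+l)}(\gamma_{ob})$ for $l=1,\ldots,k-j$. The terminal $\tilde\beta_{i+k-j}$ is homotopic to $\gamma$, exhibiting depth $\le i+k-j<k$ and contradicting minimality. A companion easy fact used implicitly is that no $\beta_i$ is null-homotopic, since a null-homotopy would propagate up the chain through the unramified covering and make $\gamma$ null-homotopic, contradicting that $\gamma$ is non-peripheral.

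With the pairwise non-homotopy in hand, I would conclude by counting: $\widehat{\mathbb C}\setminus Q$ is topologically a sphere with $|P_1|+|\Lambda|\le p$ holes; in such a planar surface every simple closed curve separates and its isotopy class is determined by the partition of the complementary components it induces, giving at most $2^{p-1}-1$ non-trivial isotopy classes. Therefore $k+1\le 2^{p-1}-1$, and the constant $M:=2^{p-1}-2$ works. The main obstacle bypassed by this approach is that the intermediate $\beta_i$'s may be peripheral, or even non-peripheral but not homotopic to any element of $\Gamma$; either possibility defeats a direct $f$-stability argument restricted to $\Gamma$, whereas the covering-space shortcut imposes no such requirement on the intermediate curves.
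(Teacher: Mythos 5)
Your proof is correct, and it takes a genuinely different route from the one the paper's terse remark (``Every multi-curve $\Gamma$ can contain at most $p-3$ curves, so \ldots'') seems to be pointing at. The implicit argument there bounds the depth by $\#\Gamma\le p-3$: each $\beta_{i-1}=f(\beta_i)$ in the chain is in fact non-peripheral (if $\beta_{i-1}$ bounded a disk $U$ meeting $Q$ in at most one point then, since all critical values of $f$ lie in $P_f\subset Q$ and $Q\subset f^{-1}(Q)$, an Euler-characteristic count on the component of $f^{-1}(U)$ adjacent to $\beta_i$ shows that $\beta_i$ also bounds such a disk; propagating the contrapositive down from $\beta_k=\gamma$ makes all the $\beta_i$ non-peripheral), whence $f$-stability places each $\beta_i$ in $\Gamma$ up to homotopy, and pairwise non-homotopy of the chain --- exactly your shortcut argument, which the paper glosses over --- then yields $k\le\#\Gamma-1\le p-4$. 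You deliberately avoid having to show the intermediates stay in $\Gamma$, bounding instead by the total number of nontrivial isotopy classes in the planar surface $\widehat{\mathbb C}\setminus Q$; this trades the linear bound for an exponential one, harmless since only finiteness of $M$ is asserted, and it requires less bookkeeping. Your isotopy-lifting shortcut itself is valid: extend $\beta_j\simeq\beta_i$ to an ambient isotopy rel $Q$, lift it through the unramified covering $f:\widehat{\mathbb C}\setminus f^{-1}(Q)\to\widehat{\mathbb C}\setminus Q$, and note that the lifted isotopy lives inside $\widehat{\mathbb C}\setminus f^{-1}(Q)\subset\widehat{\mathbb C}\setminus Q$, so it produces homotopies rel $Q$ and the iteration proceeds, landing $\gamma$ at depth $\le i+k-j<k$. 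In short: the ``main obstacle'' you cite does not actually arise, but your workaround is clean and correct, and it supplies the minimality-to-distinctness step that any proof of this proposition needs.
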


\section{Teichm\"uller space and short geodesics.}

Suppose $f$ is a sub-hyperbolic semi-rational branched covering.
Recall $Q$ and $P_{1}$ defined in (\ref{p1}) and (\ref{defq}) and
the assumption that $0,\ 1,\ \infty \in P_1$.
Let ${\mathcal M}(\mathbb{C})$ be the unit ball of the space $L^{\infty} (\mathbb{C})$. That is,
it is the set of all measurable functions $\mu$ on
$\mathbb{C}$ such that essential supremum norm $\|\mu\|_{\infty} <1$. Each element $\mu\in {\mathcal M}(\mathbb{C})$
is called a Beltrami coefficient since the measurable Riemann mapping theorem~\cite{AB} says that the Beltrami equation
$$
\phi_{\overline z} =\mu \phi_{z}
$$
has a unique quasiconformal self-map $\phi^{\mu}$ of $\widehat{\mathbb C}$ fixing $0$, $1$, and $\infty$ as a solution,
which depends on $\mu\in {\mathcal M}({\mathbb C})$ holomorphically. The map $\phi^{\mu}$ is called the normalized
solution.

\vspace*{5pt}
\begin{definition} The Teichm\"uller space $T_f$ is the equivalence class $[\mu]$
for $\mu \in {\mathcal M}({\mathbb C})$ satisfying that $\mu|Q=0$ a.e., where
$\mu_1$ and $\mu_2$ are equivalent if and
only if $\phi^{\mu_1}$ is isotopic to $\phi^{\mu_2}$ rel $Q$.
Furthermore, we can define the Teichm\"uller distance between two points
$x=[\mu]$ and $y=[\nu]$ in $T_f$ as
$$
\text{d}_T(x,y)=\frac{1}{2}\text{min}_{\widetilde{\mu}\in[\mu],\widetilde{\nu}\in[\nu]}\log K[\phi^{\widetilde{\mu}}\circ (\phi^{\widetilde{\nu}})^{-1}]
$$
where $K[\phi]$ is the maximal dilation of the quasiconformal map $\phi$.
\end{definition}

From~\cite{Li} (or refer to~\cite{JZ}),
we knew that $T_{f}$ is the Teichm\"uller space
$T(\widehat{\mathbb C}\setminus Q)$ of Riemann surface
$\widehat{\mathbb C}\setminus Q$ with
boundary $\partial Q$. It is a complex manifold and
the projective map
$$
\Phi: {\mathcal M}({\mathbb C}) \rightarrow T_{f}$$
is a holomorphic split submersion.

Define the self-map $\sigma_f$ of the Teichm\"uller $T_f$ by
$$
\sigma_f([\mu])=[f^*(\mu)].
$$
In formula,
$$
(f^{*}\mu)(z) = \frac{\mu_f(z) + \mu(f(z)) \theta(z)}{1 +
\overline{\mu_f (z)} \mu(f(z)) \theta(z)},
$$
where $\theta(z)$ = $\overline{f_{z}}/f_{z}$ and $\mu_{f}(z) = f_{\bar{z}} /f_{z}$,
is the pull-back of $\mu$ by $f$.
Since
$$
\sigma_f = \Phi \circ f^{*}\circ \Phi^{-1}
$$
where $\Phi^{-1}$ means a local holomorphic section of $\Phi$. Thus
$$
\sigma_f: T_{f}\rightarrow T_{f}
$$
is a holomorphic map. Since the Teichm\"uller metric $\text{d}_T$
coincides with the Kobayashi metric on the complex manifold $T_{f}$ and
$\sigma_{f}$ is holomorphic, we have that
$$
\text{d}_T (\sigma_{f}(x), \sigma_{f}(y)) \leq \text{d}_T (x, y), \quad \forall x, y\in T_{f}.
$$
From~\cite{JZ}, we also know that
\begin{equation}~\label{contraction}
\text{d}_T (\sigma_{f}(x), \sigma_{f}(y)) < \text{d}_T (x, y), \quad \forall x, y\in T_{f}.
\end{equation}
We need more definitions and lemmas from~\cite{JZ} as follows.

Let $Z$ be a subset of $Q$ with $\sharp (Z)\geq 4$. Let $x=[\mu]\in
T_f$ and let $\gamma\in \widehat{\mathbb{C}}\setminus Z$ be a simple
closed and non-peripheral curve. We use $l_{Z}(\gamma, x)$ to denote
the hyperbolic length of the unique simple closed geodesic which is
homotopic to $\phi^\mu(\gamma)$ in the hyperbolic Riemann surface
$\widehat{\mathbb{C}}\setminus \phi^\mu(Z)$. We say $\gamma$ is a
$(\mu,Z)$-simple closed geodesic if $\phi^{\mu}(\gamma)$ is a simple
closed geodesic in $\widehat{\mathbb{C}}\setminus \phi^\mu(Z)$.

\vspace*{5pt}
\begin{remark}
From the definition of the Teichm\"uller space $T_f$, we know that the
definition of $l_{Z} (\gamma, x)$ is independent of the choice of $\mu$ in $x$.
\end{remark}

For $x_{0}\in T_{f}$, let $x_n=\sigma^n_f(x_0)$, $n=1, \cdots$ be a sequence in $T_{f}$.
Recall our definition of $E$ in (\ref{defne}).

\vspace*{5pt}
\begin{lemma}~\label{converges}
If there is a real number $a>0$ such that there is a point $x_{0}\in T_f$ and every
$(x_{n},E)$-simple closed geodesic $\gamma\subset \widehat{\mathbb C}\setminus Q$
has hyperbolic length greater than or equal to $a$, then the sequence $\{x_{n}\}_{n=0}^{\infty}$
is convergent in $T_f$ and the limiting point is the fixed point of $\sigma_f$ in $T_f$.
\end{lemma}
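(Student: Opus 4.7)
The plan is to establish precompactness of $\{x_n\}$ in $T_f$ via a Mumford--Mahler type argument on the finite-punctured sphere $\widehat{\mathbb C}\setminus E$, extract a subsequential limit, and then use the strict contraction property~(\ref{contraction}) of $\sigma_f$ to promote this to full convergence with a fixed-point limit.

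\textbf{Step 1 (precompactness).} I would first pass to the forgetful projection $\pi\colon T_f\to T(\widehat{\mathbb C}\setminus E)$ induced by $E\subset Q$; this is well defined since $\mu|Q=0$ forces $\phi^\mu$ to be conformal near $E$. The hypothesis gives a lower bound $a$ on the hyperbolic length of every $(x_n,E)$-simple closed geodesic \emph{lying in $\widehat{\mathbb C}\setminus Q$}, and I need to upgrade this to a genuine systole bound on $\widehat{\mathbb C}\setminus\phi^{\mu_n}(E)$ so that the classical Mumford--Mahler compactness for the moduli space $M_{0,p}$ applies (the normalization $0,1,\infty\in P_1\subset E$ pins the Teichm\"uller representatives uniquely). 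The curves one has to worry about are those that meet some disk $D_i$ essentially; here I would use that $f$ is holomorphic on $\overline{D_i}\cup A_i$ and maps this set into $D_{i+1}$, so that the shielding rings $A_i$ provide a priori moduli that prevent pinching in the disks. In particular, a hypothetically short such geodesic for $x_{n+1}$ pulls back under $f$ to a short geodesic for $x_n$ that avoids $Q$, contradicting the hypothesis (after possibly shrinking the constant). The compactness in $T(\widehat{\mathbb C}\setminus E)$ then lifts to compactness in $T_f$ because the fibers of $\pi$ are controlled by the rigid holomorphic structure $f$ imposes on the disks $D_i$.

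\textbf{Step 2 (the subsequential limit is a fixed point).} Extract $x_{n_k}\to x^\ast$ in $T_f$. By continuity of $\sigma_f$, $x_{n_k+1}=\sigma_f(x_{n_k})\to \sigma_f(x^\ast)$. The weak contraction makes the sequence $d_T(x_n,x_{n+1})$ monotonically non-increasing, say with limit $L\geq 0$, so $d_T(x^\ast,\sigma_f(x^\ast))=L$. If $L>0$, then $x^\ast\neq \sigma_f(x^\ast)$, and~(\ref{contraction}) yields the strict inequality
\[
d_T(\sigma_f(x^\ast),\sigma_f^2(x^\ast))<d_T(x^\ast,\sigma_f(x^\ast))=L;
\]
but applying continuity to the subsequence $\sigma_f(x_{n_k})$ shows this quantity also equals $L$, a contradiction. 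Hence $L=0$ and $x^\ast=\sigma_f(x^\ast)$. Full convergence $x_n\to x^\ast$ follows because any subsequential limit of $\{x_n\}$ is likewise a fixed point, and fixed points of $\sigma_f$ are unique by~(\ref{contraction}).

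\textbf{Anticipated main obstacle.} The real work is in Step 1, specifically turning a geodesic-length bound restricted to curves \emph{in $\widehat{\mathbb C}\setminus Q$} into a honest systole bound on the $p$-punctured sphere $\widehat{\mathbb C}\setminus\phi^{\mu_n}(E)$, and subsequently promoting compactness in $T(\widehat{\mathbb C}\setminus E)$ to compactness in $T_f$. This is precisely where the sub-hyperbolic hypothesis does work: the holomorphy of $f$ on the shielding rings $A_i$ and the disks $D_i$, together with the forward invariance $f(\overline{D_i}\cup A_i)\subset D_{i+1}$, lets us propagate bounded geometry across the iteration and close the argument. Steps 2 and 3 are essentially formal consequences of the strict contraction and should be routine once compactness is in hand.
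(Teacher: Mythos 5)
The paper does not actually prove this lemma internally; it is quoted from~\cite{JZ}, so there is no in-text proof to compare against. However, the introduction explicitly flags that the strategy you adopt is the one that \emph{fails} in this setting: the authors point out that the Douady--Hubbard argument --- in which convergence of $\{x_n\}$ is deduced from precompactness of its moduli-space projection via Mumford compactness --- ``does not apply for the geometrically finite case,'' because $T_f=T(\widehat{\mathbb{C}}\setminus Q, X)$ is an infinite-dimensional Teichm\"uller space of a \emph{bordered} surface, and they replace that argument with the bounded-geometry machinery of Definition~\ref{bgg}. Your Step~1 reproduces the failing step. The forgetful map $\pi\colon T_f\to T(\widehat{\mathbb{C}}\setminus E)$ has infinite-dimensional fibers: they record the conformal geometry near the boundary circles $\partial D_i$, including boundary-curve shape and twist data, and the constraint $\mu_n|_{Q\cup\bigcup_i A_i}=0$ (valid for $n\geq 1$) does not tame this, since $\phi^{\mu_n}(\partial D_i)$ still depends on $\mu_n$ away from $Q\cup\bigcup_i A_i$. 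Your sentence ``the fibers of $\pi$ are controlled by the rigid holomorphic structure $f$ imposes on the disks $D_i$'' is precisely the assertion that needs a proof, and you give no mechanism for it; without it, precompactness of $\pi(\{x_n\})$ in the finite-dimensional target does not lift to precompactness of $\{x_n\}$ in $T_f$, and Step~1 does not close.

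Two smaller points. First, your sub-claim that sufficiently short $(x_n,E)$-geodesics must lie in $\widehat{\mathbb{C}}\setminus Q$ is correct --- this is Lemma~\ref{comparision} --- but the dynamical pull-back heuristic you sketch is not how one gets it; the argument is a static modulus estimate using that $\phi^{\mu_n}$ is conformal on the rings $A_i$, so $\operatorname{mod}\phi^{\mu_n}(A_i)=\operatorname{mod}A_i$ is uniformly bounded below, which keeps any curve separating $\{a_i,b_i\}$ from the rest of $E$ away from the short-length regime. Second, Step~2 --- passing from a subsequential limit to a fixed point via the strict contraction~(\ref{contraction}), and then to full convergence via uniqueness of the fixed point --- is formally sound, but it is downstream of the precompactness you have not established; the entire burden of the lemma lies in Step~1.
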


\begin{remark}
This lemma implies that if there exists an $x_0\in T_f$ such that the length of the shortest geodesics on
all the $x_{n}$ has a uniform lower bound, then $f$ has no
Thurston obstructions.
\end{remark}

\vspace*{5pt}
\begin{lemma}\label{comparision}
There exists an $\eta>0$ such that for any point $x=[\mu]\in T_f$
with $\mu(z)=0$ on $\cup_i A_i$ and for any $(x, E)$-simple geodesic
$\gamma \subset \widehat{\mathbb C}\setminus E$ with
$l_{E}(\gamma,x)<\eta$, we have $\gamma\subset \widehat{\mathbb C}
\setminus Q$. Moreover, for any $\epsilon >0$, there exists a $\delta>0$ such that
$$l_{E}(\gamma, x)>(1-\epsilon) l_{Q}(\gamma, x)$$ whenever
$l_{E}(\gamma,x)<\delta$.
\end{lemma}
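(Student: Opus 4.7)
\emph{Proof plan.} The plan is to leverage the hypothesis $\mu = 0$ on $\bigcup_i A_i$: $\phi^\mu$ is then conformal on each shielding annulus, so every image $\phi^\mu(A_i)$ has the same modulus as $A_i$, and $m(\phi^\mu(A_i)) \geq m_0 := \min_i m(A_i) > 0$ uniformly in $\mu$. Set $L_0 := \pi/m_0$. Both assertions will rest on the classical intersection inequality for simple closed geodesics on a hyperbolic surface, combined with Schwarz--Pick comparisons.

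For the first assertion, let $c_i$ denote the simple closed geodesic in $\widehat{\mathbb C} \setminus \phi^\mu(E)$ representing the core of $\phi^\mu(A_i)$. Schwarz--Pick applied to the conformal inclusion $\phi^\mu(A_i) \hookrightarrow \widehat{\mathbb C} \setminus \phi^\mu(E)$ gives $l_E(c_i, x) \leq \pi/m(\phi^\mu(A_i)) \leq L_0$. If $\phi^\mu(\gamma)$ entered $\phi^\mu(\overline{D_i})$ from outside, it would traverse $\phi^\mu(A_i)$, so its geometric intersection number with $c_i$ would be positive. The classical intersection inequality for two distinct simple closed geodesics then forces
\[
\sinh(l_E(\gamma,x)/2)\,\sinh(L_0/2) \geq 1,
\]
hence $l_E(\gamma,x) \geq \eta_0 := 2\sinh^{-1}(1/\sinh(L_0/2)) > 0$. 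Taking $\eta := \eta_0$, any $(x,E)$-simple geodesic with $l_E(\gamma,x) < \eta$ is disjoint from every $\phi^\mu(\overline{D_i})$, so $\gamma \subset \widehat{\mathbb C} \setminus Q$. The same argument in fact shows $\phi^\mu(\gamma)$ lies outside every $\phi^\mu(\overline{D_i \cup A_i})$, so that $\phi^\mu(A_i)$ serves as a buffer of modulus at least $m_0$ between $\phi^\mu(\gamma)$ and $\phi^\mu(\overline{D_i})$.

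For the second assertion, I compare $l_E$ and $l_Q$ via the collar of the short geodesic. By the collar lemma, $\phi^\mu(\gamma)$ is the core of an embedded annulus $C \subset \widehat{\mathbb C}\setminus \phi^\mu(E)$ of modulus $M \to \infty$ as $l_E(\gamma,x) \to 0$, with $M \geq \pi/l_E(\gamma,x) - O(1)$. Using the shielding annuli $\phi^\mu(A_i)$ as barriers of modulus at least $m_0$, a Gr\"otzsch-type argument extracts from $C$ a sub-annulus $C' \subset \widehat{\mathbb C} \setminus \phi^\mu(Q)$ around $\phi^\mu(\gamma)$ of modulus at least $M - K_0$, where $K_0$ depends only on $m_0$ and $|\Lambda|$. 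Since the hyperbolic length of the core of an embedded annulus of modulus $m$ in the ambient surface is at most $\pi/m$ by Schwarz--Pick, we deduce $l_Q(\gamma,x) \leq \pi/(M - K_0)$. Combined with the asymptotic $l_E(\gamma,x) \sim \pi/M$ for small $l_E$, this gives $l_Q/l_E \leq M/(M - K_0)\,(1 + o(1)) \to 1$ as $l_E(\gamma,x) \to 0$. For any $\epsilon > 0$, choosing $\delta$ small enough yields $l_E(\gamma,x) > (1 - \epsilon)\,l_Q(\gamma,x)$ whenever $l_E(\gamma,x) < \delta$.

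The main obstacle will be the sub-annulus extraction in the second part: quantifying the loss of modulus in passing from $C$ to a sub-annulus lying in $\widehat{\mathbb C} \setminus \phi^\mu(Q)$. The key point is that each $\phi^\mu(A_i)$ has fixed modulus at least $m_0$ and separates $\phi^\mu(\overline{D_i})$ from $\phi^\mu(\gamma)$, so the intrusion of $C$ into $\phi^\mu(\overline{D_i \cup A_i})$ has modulus bounded by a function of $m_0$ alone, uniformly in $\mu$. Summing over the finitely many shielding annuli produces the constant $K_0$ and turns the Schwarz--Pick contraction into the desired $(1-\epsilon)$-refinement for short $E$-geodesics.
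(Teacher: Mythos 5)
The paper does not actually prove Lemma~\ref{comparision}: it is one of several technical statements quoted from~\cite{JZ} (``We need more definitions and lemmas from~\cite{JZ} as follows''), so there is no in-paper proof to compare against. Your opening observation is the right one to make: since $\mu\equiv 0$ on each shielding ring $A_i$, the map $\phi^\mu|_{A_i}$ is conformal, so $\mathrm{mod}\,\phi^\mu(A_i)=\mathrm{mod}\,A_i\geq m_0$ uniformly in $x$, and this is the sole geometric control you have over the geometry near the $\phi^\mu(\overline{D_i})$'s. That part of the plan is sound.

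However, both halves of the argument have real gaps. For the first assertion, the step ``$\phi^\mu(\gamma)$ traverses $\phi^\mu(A_i)$, so its geometric intersection number with $c_i$ is positive'' is false: a traversal of an annulus by a closed curve is detected by intersection with the \emph{core of that annulus}, not by the homotopy-theoretic intersection number $i([\gamma],[c_i])$, and the latter vanishes whenever $\phi^\mu(a_i)$ and $\phi^\mu(b_i)$ lie on the same side of $\phi^\mu(\gamma)$. In that situation the two geodesics $\phi^\mu(\gamma)$ and $c_i$ are \emph{disjoint}, and your collar inequality gives nothing; the excursion of $\phi^\mu(\gamma)$ into $\phi^\mu(\overline{D_i})$ must be ruled out some other way. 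What is needed is an argument that the geodesic $c_i$ actually lies inside $\phi^\mu(A_i)$ (so that $\phi^\mu(\overline{D_i})$ is strictly on the inner side of $c_i$) — this requires the shielding rings to have modulus above a threshold, which must either be arranged in the construction of $\Lambda$ or justified, and is not merely a consequence of Schwarz--Pick plus the intersection inequality. For the second assertion, the crucial claim that the intrusion of the collar $C$ into $\phi^\mu(\overline{D_i\cup A_i})$ costs only a constant $K_0$ of modulus is asserted but not established, and it is not a routine Gr\"otzsch estimate: $C$ has modulus growing like $\pi/l_E(\gamma,x)\to\infty$, while the barrier $\phi^\mu(A_i)$ has \emph{fixed} modulus $m_0$, so nothing in what you wrote prevents the excess from being unbounded. (You flag this yourself as ``the main obstacle,'' but the sketch offered to close it — ``summing over the finitely many shielding annuli'' — does not engage the actual difficulty, which is that the sub-annulus extraction has to use the collar geometry of the short geodesic, not just the moduli of the $A_i$.) The statement is correct and is proved in~\cite{JZ}; you should consult that source for how the sub-annulus of $C$ lying in $\widehat{\mathbb C}\setminus\phi^\mu(Q)$ is actually produced, and strengthen the first half to cover the disjoint-geodesics case.
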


\vspace*{5pt}
\begin{remark}~\label{delta0}
The above lemma implies that for any $x=[\mu]\in T_{f}$ with $\mu(z)=0$ for all
$z\in\cup_i A_i$, a sufficiently short geodesics in
$\widehat{\mathbb{C}}\setminus \phi^\mu(E)$ are homotopic to the
sufficiently short geodesics in $\widehat{\mathbb{C}}\setminus
\phi^\mu(Q)$. More precisely, we can find a constant $\delta_{0}>0$ such that
$$
\frac{1}{e} l_{Q}(\gamma, x)<l_{E}(\gamma, x)<l_{Q}(\gamma, x) \quad \hbox{whenever $l_{E}(\gamma,x)<\delta_{0}$}.
$$
\end{remark}

Suppose $x=[\mu]\in T_{f}$ and $Z\subset Q$. Define
$$
w_{Z}(\gamma,x)=-\log l_{Z}(\gamma,x).
$$
Consider the set
$$
L_{Z,x}=\{w_{Z}(\gamma,x)\}
$$
where $\gamma$ ranges over all the non-peripheral simple closed
curves in $ \widehat{\mathbb{C}} \setminus Q$.
Define
$$
w_Z(x)=\text{sup} \{w_Z(\gamma, x)\}
$$
and
$$
w_Z(\Gamma, x)=\text{max}_{\gamma\in \Gamma}{w_Z(\gamma,x)}.
$$
The following lemma is a general result for hyperbolic Riemann surfaces (refer to~\cite{DH,JZ}).
We just state it in our case.

\vspace*{5pt}
\begin{lemma}\label{lipschitz} Let $Z\subset Q$ be a finite subset with $\sharp Z\geq 4$ and $\gamma \subset  \widehat{\mathbb{C}}\setminus Q$ be
a non-peripheral simple closed curve. Then the function
$$x\mapsto w_Z(\gamma, x): T_f\to \mathbb{R}$$ is Lipschitz
with Lipschitz constant 2.
\end{lemma}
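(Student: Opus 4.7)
The plan is to reduce the claim to the classical Wolpert inequality relating the hyperbolic length of a simple closed geodesic on a Riemann surface to the dilatation of a quasiconformal change of marking. Recall that if $h: R \to S$ is a $K$-quasiconformal homeomorphism between hyperbolic Riemann surfaces and $\gamma$ is a simple closed curve on $R$, then the unique geodesic homotopic to $h(\gamma)$ on $S$ has length satisfying
$$\tfrac{1}{K}\, \ell_R(\gamma) \;\leq\; \ell_S(h(\gamma)) \;\leq\; K\, \ell_R(\gamma).$$
This gives $|\log \ell_R(\gamma) - \log \ell_S(h(\gamma))| \leq \log K$. The Teichm\"uller distance as defined in the paper is $d_T(x,y) = \tfrac{1}{2} \inf \log K[\phi^{\tilde\mu}\circ (\phi^{\tilde\nu})^{-1}]$, so the factor $\log K$ translates directly into a factor of $2\,d_T$ on the length side, yielding Lipschitz constant exactly $2$.

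First I would fix $x=[\mu]$ and $y=[\nu]$ in $T_f$ and, for an arbitrary $\epsilon>0$, choose representatives $\tilde\mu\in[\mu]$ and $\tilde\nu\in[\nu]$ so that
$$\tfrac{1}{2}\log K\bigl[\phi^{\tilde\nu}\circ (\phi^{\tilde\mu})^{-1}\bigr] \;<\; d_T(x,y) + \epsilon.$$
Set $h := \phi^{\tilde\nu}\circ (\phi^{\tilde\mu})^{-1}$ with maximal dilatation $K$. Because both $\tilde\mu$ and $\tilde\nu$ vanish on $Q$, we have $\phi^{\tilde\mu}(Z) = \phi^{\tilde\nu}(Z)$ as sets whenever $Z$ consists of the fixed normalization points $0,1,\infty$, and more generally the restriction
$$h:\;\widehat{\mathbb C}\setminus\phi^{\tilde\mu}(Z) \longrightarrow \widehat{\mathbb C}\setminus \phi^{\tilde\nu}(Z)$$
is a $K$-quasiconformal homeomorphism in the correct homotopy class, so that $h(\phi^{\tilde\mu}(\gamma))$ is freely homotopic to $\phi^{\tilde\nu}(\gamma)$ in the target. (Strictly speaking, one uses the isotopy rel $Q \supset Z$ defining the equivalence classes to arrange this, which is exactly the reason the definitions of $T_f$ and of $l_Z(\gamma,x)$ in the paper are independent of representative.)

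Second, I would apply Wolpert's inequality to $h$ and the geodesic representing $\phi^{\tilde\mu}(\gamma)$ on $\widehat{\mathbb C}\setminus\phi^{\tilde\mu}(Z)$. This yields
$$\tfrac{1}{K}\, l_Z(\gamma,x) \;\leq\; l_Z(\gamma,y) \;\leq\; K\, l_Z(\gamma,x),$$
and hence $|\log l_Z(\gamma,y) - \log l_Z(\gamma,x)| \leq \log K < 2\,d_T(x,y) + 2\epsilon$. Multiplying by $-1$ and letting $\epsilon \to 0$ gives
$$\bigl|w_Z(\gamma,y) - w_Z(\gamma,x)\bigr| \;\leq\; 2\,d_T(x,y),$$
which is precisely the claimed Lipschitz estimate with constant $2$.

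The only real obstacle is bookkeeping the markings: one must confirm that the qc map representing the Teichm\"uller distance in $T_f = T_0(Q)$ descends to a qc map between the punctured spheres $\widehat{\mathbb C}\setminus \phi^{\tilde\mu}(Z)$ and $\widehat{\mathbb C}\setminus \phi^{\tilde\nu}(Z)$ sending the marked curve $\phi^{\tilde\mu}(\gamma)$ to one homotopic to $\phi^{\tilde\nu}(\gamma)$. Once this identification is in place, Wolpert's lemma is classical (e.g.\ it is the same step used in~\cite{DH,JZ}) and finishes the proof.
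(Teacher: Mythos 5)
Your proof is correct and uses precisely the argument the paper delegates to the cited references [DH, JZ]: one passes to a near-extremal $K$-quasiconformal representative $h=\phi^{\tilde\nu}\circ(\phi^{\tilde\mu})^{-1}$ of the change of marking and applies Wolpert's inequality, noting that the factor-of-$2$ comes exactly from the normalization $d_T=\tfrac12\log K$. The bookkeeping you flag at the end — that $h$ carries $\phi^{\tilde\mu}(Z)$ to $\phi^{\tilde\nu}(Z)$ and $\phi^{\tilde\mu}(\gamma)$ to $\phi^{\tilde\nu}(\gamma)$ because $h\circ\phi^{\tilde\mu}=\phi^{\tilde\nu}$ and both Beltrami coefficients vanish on $Q\supset Z$ — is indeed the only point requiring care, and you have handled it correctly.
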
 Let
$$
A=\text{max}\{-\log\log(2\sqrt{2}+3), -\log \delta_0\}
$$
where $\delta_0$ is the number in Remark~\ref{delta0}.
Note that $\log (2\sqrt{2}+3)$ is the magic number in the theory of hyperbolic Riemann surfaces such that
for any hyperbolic Riemann surface $S$, any two simple closed geodesics $\gamma$ and $\gamma'$ in $S$ are disjoint
whenever the hyperbolic lengths of $\gamma$ and $\gamma'$ are less than $\log (2\sqrt{2}+3)$.
This implies that for any point $x\in T_{f}$, there are at most $p-3$
curves $\gamma$ with $l_{E}(\gamma, x)\leq \log(2\sqrt{2}+3)$.

For any $J>0$, let $(a,b)$ be the lowest interval in
$\mathbb{R}\setminus L_{E,x}$ such that $a\geq A$ and $b-a=J$. For
any $x=[\nu]\in T_{f}$, define
$$
\Gamma_{J,x}=\{\gamma\ | \gamma \text{ is a simple closed geodesic on }
R_x\  \text{and} \ w_{E}(\gamma,x)\geq b\}.
$$
Then $\Gamma_{J,x}$ is a multi-curve consisting of the geodesics which are
sufficiently short on $\mathbb{\widehat{C}}\setminus \phi^\mu(E)$. This is equivalent saying that
they are all the simple closed curves in $\widehat{\mathbb{C}}\setminus
\phi^{\mu}(Q)$ which are homotopic to
sufficiently short simply closed geodesics on $\widehat{\mathbb{C}}\setminus
\phi^{\mu}(Q)$.
There are at most $p-3$ elements in $\Gamma_{J,x}$ for any $x$ and they are pairwise disjoint.

For any $x\in T_{f}$, let $D=d_T(x,\sigma_f(x))$.

\vspace*{5pt}
\begin{lemma}~\label{stable}
If $J\geq \log d+2D+1$ and $\Gamma_{J,x}\not=\emptyset$, then $\Gamma_{J,x}$ is an $f$-stable multi-curve.
\end{lemma}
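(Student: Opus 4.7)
The plan is to show that for every $\gamma\in\Gamma_{J,x}$ and every non-peripheral component $\tilde\gamma$ of $f^{-1}(\gamma)$, the simple closed geodesic in $\widehat{\mathbb{C}}\setminus\phi^\mu(E)$ homotopic to $\tilde\gamma$ belongs to $\Gamma_{J,x}$. The key observation is that $(a,b)\cap L_{E,x}=\emptyset$, so once we establish the strict inequality $w_E(\tilde\gamma,x)>a$, the gap automatically promotes it to $w_E(\tilde\gamma,x)\ge b$, placing $\tilde\gamma$ in $\Gamma_{J,x}$.

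The first step is a length estimate at $\sigma_f(x)=[f^*\mu]$. Under these complex structures $f$ restricts to an unbranched holomorphic covering from $\widehat{\mathbb{C}}\setminus\phi^{f^*\mu}(f^{-1}(Q))$ onto $\widehat{\mathbb{C}}\setminus\phi^{\mu}(Q)$, so it is a local isometry in their intrinsic hyperbolic metrics. Hence if $\tilde\gamma$ maps onto $\gamma$ with degree $d'\le d$, then $\tilde\gamma$ is a closed geodesic of length exactly $d'\cdot l_Q(\gamma,x)$ on the preimage surface. Since $f(Q)\subset Q$ implies $Q\subset f^{-1}(Q)$, the Schwarz--Pick inclusion $\widehat{\mathbb{C}}\setminus f^{-1}(Q)\hookrightarrow\widehat{\mathbb{C}}\setminus Q$ only shortens lengths; combined with $E\subset Q$ (so $l_E\le l_Q$), this yields
$$
w_E(\tilde\gamma,\sigma_f(x))\;\ge\;w_Q(\tilde\gamma,\sigma_f(x))\;\ge\;w_Q(\gamma,x)-\log d.
$$

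Next I would transport this back to $x$. Because $w_E(\gamma,x)\ge b>A\ge -\log\delta_0$, we have $l_E(\gamma,x)<\delta_0$, so Remark~\ref{delta0} gives the \emph{strict} inequality $w_Q(\gamma,x)>w_E(\gamma,x)-1\ge b-1$. Applying Lemma~\ref{lipschitz} with $Z=E$, which makes $w_E(\tilde\gamma,\cdot)$ $2$-Lipschitz in the Teichm\"uller metric, along a path of length $d_T(x,\sigma_f(x))=D$ gives
$$
w_E(\tilde\gamma,x)\;\ge\;w_E(\tilde\gamma,\sigma_f(x))-2D\;>\;b-1-\log d-2D.
$$
The assumption $J\ge \log d+2D+1$ is precisely what is needed to force $b-1-\log d-2D\ge b-J=a$, yielding $w_E(\tilde\gamma,x)>a$. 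Since $\tilde\gamma$ is a non-peripheral simple closed curve in $\widehat{\mathbb{C}}\setminus Q$, its value $w_E(\tilde\gamma,x)$ lies in $L_{E,x}$, which is disjoint from $(a,b)$; hence $w_E(\tilde\gamma,x)\ge b$, so $\tilde\gamma$ is homotopic to an element of $\Gamma_{J,x}$, establishing $f$-stability.

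The main obstacle I anticipate is the careful bookkeeping of strict versus non-strict inequalities: a non-strict chain only delivers $w_E(\tilde\gamma,x)\ge a$, which is not enough to jump across the gap. The single strict inequality must come from Remark~\ref{delta0}, which is stated only for representatives $\mu$ vanishing on $\bigcup_i A_i$. One should therefore open the argument by noting that each $A_i$ is disjoint from $Q$, so every class $x\in T_f$ admits a representative that is normalized to be $0$ on $\bigcup_i A_i$; Remark~\ref{delta0} then applies, the strict inequality $l_Q(\gamma,x)<e\cdot l_E(\gamma,x)$ propagates through the telescoping estimate, and the gap argument closes.
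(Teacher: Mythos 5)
The paper outsources this proof to [JZ, Lemma 7.3], so there is no in-paper proof to compare against; but your argument — pull lengths back through the unbranched covering $f:\widehat{\mathbb C}\setminus f^{-1}(Q)\to\widehat{\mathbb C}\setminus Q$ at $\sigma_f(x)$, transport via the $2$-Lipschitz property of $w_E(\tilde\gamma,\cdot)$, then land strictly above $a$ so the gap $(a,b)$ in $L_{E,x}$ forces $w_E(\tilde\gamma,x)\ge b$ — is the natural adaptation of the Douady--Hubbard argument and is almost certainly the intended one. The chain of estimates is correct, and you are right that the single strict inequality must come from Remark~\ref{delta0}, since $a$ itself may lie in $L_{E,x}$.

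The one step I would not accept as written is the final remark that ``every class $x\in T_f$ admits a representative that is normalized to be $0$ on $\bigcup_i A_i$.'' This is what lets you invoke Remark~\ref{delta0}, but it is not true in general: if $\nu\sim\mu$ rel $Q$ then $\phi^\nu|_{\overline{D_i}}=\phi^\mu|_{\overline{D_i}}$ is forced (isotopy rel $Q$ fixes $Q$ pointwise), so $\nu|_{A_i}=0$ would make $\phi^\nu|_{\overline{D_i}\cup A_i}$ the holomorphic extension of $\phi^\mu|_{\overline{D_i}}$ across $A_i$. Such an extension exists only when $\phi^\mu(\partial D_i)$ is an analytic arc, which fails for a generic $\mu$. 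So the hypothesis ``$\mu|\bigcup A_i=0$'' genuinely cuts down the set of classes to which Lemma~\ref{comparision}/Remark~\ref{delta0} applies; it is automatic for $x_n=\sigma_f^n(x_0)$ with $n\ge1$ (because $f(\bigcup A_i)\subset\bigcup D_i$ and $\mu_0|D_i=0$), but not for an arbitrary $x$. You should either state the lemma with that hypothesis on a representative, or note that one can harmlessly replace $x_0$ by $x_1$ at the outset since the limiting behavior of the orbit is what matters. (This same wrinkle is implicitly present in the paper's own use of Lemma~\ref{eps} at $x_0$ in the proof of Proposition~\ref{fundamental}.) Apart from this, the proof is sound.
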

See Lemma 7.3 in~\cite{JZ}.

\section{Upper bound for $\Gamma_\infty$}

We still keep the notations in the previous sections.
Suppose $x_{0}\in T_{f}$ and $x_{n}= \sigma_f^{n}(x_{0})$ for all
$n\geq 1$. Then we have a sequence $\{ x_{n}\}_{n=0}^{\infty}$ in
$T_{f}$.

For all $n>0$ and all $z\in \cup_{i}A_i$, we have that
$\mu_n(z)=0$, where $[\mu_n]=x_n$, since $f(\cup_i A_i)\subset
\cup_i D_i$ as we constructed $\{A_{i}\}$ as the shielding rings.

Recall the definition of $E=P_1\cup \cup_i\{a_i, b_i\}$ in (\ref{defne}) and $m$ in Proposition \ref{one half}.
Let $$
P_2=E\cup f^m(E)\cup \cup_{1\leq j\leq m}f^{j}(\Omega_f)\subset Q.
$$
The following lemma is also from~\cite{JZ}.

\vspace*{5pt}
\begin{lemma}~\label{eps}
There exists an $\epsilon_0>0$, such that for any $x=[\mu]\in T_f$ with $\mu(z)=0$ for all $z\in \cup_i A_i$,
and for any $(\mu, P_2)$-simple closed geodesic $\gamma'$, if $l_{P_2}(\gamma',x)<\epsilon_0$,
then there is a $(\mu, E)$-simple closed geodesic $\gamma$ such that $\gamma'$ is homotopic to $\gamma$ in $\widehat{\mathbb{C}}\setminus P_2$.
\end{lemma}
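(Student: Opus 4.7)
The plan is to leverage the conformal inclusion $\widehat{\mathbb{C}}\setminus\phi^\mu(P_2)\hookrightarrow \widehat{\mathbb{C}}\setminus\phi^\mu(E)$ coming from $E\subset P_2$, together with the collar lemma, and the structural fact that the added punctures $P_2\setminus E$ all live inside the disks $D_i$. Concretely, the key observation is that $P_2\setminus E\subset\bigcup_i D_i$: $E$ already contains $P_1$ and the marked points $\{a_i,b_i\}$, and each additional point of $P_2$ arises as an $f^j$-image of a point of $E$ or of a critical point with $1\le j\le m$; by the shielding invariance $f(\overline{D_i}\cup A_i)\subset D_{i+1}$ together with $f^j(P_1)\subset P_f\subset P_1\cup D$, every such iterate either stays inside $E$ or gets trapped in some $D_i$. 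Since $\mu$ vanishes on each $A_i$, the map $\phi^\mu$ is holomorphic near $\partial D_i$, so $\phi^\mu(D_i)$ is a Jordan disk whose closure contains $\phi^\mu(a_i)\in\phi^\mu(E)$ in its interior and $\phi^\mu(b_i)\in\phi^\mu(E)$ on the boundary, and encloses all of $\phi^\mu(P_2\setminus E)\cap\phi^\mu(D_i)$.

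The first step is to invoke the standard collar lemma for $\widehat{\mathbb{C}}\setminus\phi^\mu(P_2)$: there exists $\epsilon_0>0$ such that whenever $l_{P_2}(\gamma',x)<\epsilon_0$, the geodesic $\gamma'$ admits an embedded collar annulus $\mathcal{C}$ whose conformal modulus grows without bound as $\epsilon_0$ shrinks. This annulus embeds conformally into the larger surface $\widehat{\mathbb{C}}\setminus\phi^\mu(E)$ with the same modulus.

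The main obstacle is showing that $\gamma'$ remains non-peripheral in $\widehat{\mathbb{C}}\setminus\phi^\mu(E)$. Suppose for contradiction that $\gamma'$ bounds a disk $\Delta\subset\widehat{\mathbb{C}}$ with $\#(\Delta\cap\phi^\mu(E))\le 1$. Non-peripherality of $\gamma'$ in the $P_2$-surface forces $\Delta\cap\phi^\mu(P_2\setminus E)\ne\emptyset$, and by the key observation this intersection lies in $\bigcup_i\phi^\mu(D_i)$. Pick any index $i$ with $\phi^\mu(D_i)\cap\Delta\ne\emptyset$. If $\phi^\mu(\overline{D_i})\subset\Delta$, then $\Delta\cap\phi^\mu(E)$ already contains both $\phi^\mu(a_i)$ and $\phi^\mu(b_i)$, contradicting $\#\le 1$; hence $\gamma'$ must cross the Jordan curve $\phi^\mu(\partial D_i)$. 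The collar $\mathcal{C}$ then contains a long conformal cylinder threading $\phi^\mu(\partial D_i)$, separating $\phi^\mu(b_i)$ from some $P_2\setminus E$-puncture inside $\phi^\mu(D_i)$ while avoiding both. Because $\phi^\mu$ is conformal on the shielding annulus $A_i$, the modulus of $\phi^\mu(A_i)$ equals the fixed modulus of $A_i$, and together with the finite cardinality of $\{D_i\}$ this provides a uniform upper bound for the modulus of any annulus crossing $\phi^\mu(\partial D_i)$ in this way; for $\epsilon_0$ small enough, this contradicts the arbitrarily large modulus of $\mathcal{C}$ from the first step.

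Once $\gamma'$ is known to be non-peripheral in $\widehat{\mathbb{C}}\setminus\phi^\mu(E)$, let $\gamma$ be its $(\mu,E)$-geodesic representative. Since the hyperbolic metric on the bigger surface is dominated by that on $\widehat{\mathbb{C}}\setminus\phi^\mu(P_2)$ under the inclusion, $\gamma$ is also short, and any sufficiently short geodesic in the homotopy class of $\gamma'$ must lie inside the thick collar $\mathcal{C}\subset\widehat{\mathbb{C}}\setminus\phi^\mu(P_2)$. Consequently $\gamma\subset\widehat{\mathbb{C}}\setminus\phi^\mu(P_2)$ and is homotopic to $\gamma'$ there, giving the desired curve.
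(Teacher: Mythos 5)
The paper itself does not prove Lemma~\ref{eps}: it is stated with the remark ``The following lemma is also from~\cite{JZ},'' so there is no in-paper proof to match against. Your proposal must therefore be judged on its own terms, and it has the right structural intuition — the observation that $P_2\setminus E\subset\bigcup_i D_i$, the conformality of $\phi^\mu$ on $\overline{D_i}\cup A_i$ (this uses both $\mu|_Q=0$ and the extra hypothesis $\mu|_{\cup A_i}=0$), the collar lemma, and a Teichm\"uller-type modulus bound are indeed the right ingredients. But the contradiction argument has two genuine gaps.

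First, the dichotomy ``if $\phi^\mu(\overline{D_i})\subset\Delta$ then contradiction, hence $\gamma'$ must cross $\phi^\mu(\partial D_i)$'' is incomplete. When $\phi^\mu(D_i)\cap\Delta\neq\emptyset$ and $\gamma'\cap\phi^\mu(\partial D_i)=\emptyset$, there is a third possibility you do not address, namely $\Delta\subset\phi^\mu(D_i)$: the whole peripheral disk (and hence $\gamma'$ itself) lives inside a single $\phi^\mu(D_i)$. This case is in fact the typical one for a curve that is essential rel $P_2$ but trivial rel $E$ (it encircles some of the new punctures $P_2\setminus E$ inside $D_i$), so it cannot simply be omitted. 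It can be handled by a cross-ratio argument: one finds two points $p_1,p_2\in\Delta\cap\phi^\mu(P_2)$ and two points $q_1,q_2\in\Delta_2\cap\phi^\mu(P_2)$ all lying in $\phi^\mu(\overline{D_i}\cup A_i)$, whose cross-ratio is fixed because $\phi^\mu$ is conformal there, and then applies the Teichm\"uller modulus theorem. But even here there is a sub-case (when the one $E$-point of $\Delta$ is $a_i$, so that the only $P_2$-point of $\Delta_2$ in $\overline{D_i}$ is $b_i$) that requires using the shielding ring $A_i$ explicitly, e.g.\ by a reflection argument across $\phi^\mu(\partial(D_i\cup A_i))$ or by an extremal-length estimate with a metric concentrated on $\phi^\mu(D_i\cup A_i)$.

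Second, in the crossing case, the conclusion ``the fixed modulus of $A_i$ provides a uniform upper bound for the modulus of any annulus crossing $\phi^\mu(\partial D_i)$ in this way'' is asserted, not proved, and as stated it is not a valid principle: an annulus can cross a fixed-modulus annulus and still have arbitrarily large modulus, and an embedded annulus separating a \emph{single} pair of points $\{\phi^\mu(b_i),\ \text{puncture in }D_i\}$ has no modulus bound at all (it could be nearly nullhomotopic around one of them). To obtain a contradiction you must exhibit a two-points-versus-two-points separation with conformally rigid positions, as above; the bare fact that $\text{mod}(\phi^\mu(A_i))=\text{mod}(A_i)$ does not by itself deliver the bound. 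So the plan is correct in spirit, but the modulus estimate — the heart of the lemma — needs a complete argument covering both the $\gamma'\subset\phi^\mu(D_i)$ case and the crossing case, and the passage from ``separates $b_i$ from a puncture'' to ``bounded modulus'' is where the substance is missing.
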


The following lemma is also a general result in the theory of hyperbolic Riemann surfaces and the reader can find a proof in~\cite{DH}.

\vspace*{5pt}
\begin{lemma}~\label{lip}
Let $X$ be a hyperbolic Riemann surface, $P\subset X$
is a finite subset, and $\sharp P<p$. Let $X'=X\setminus P$ and
$L<\log(3+2\sqrt 2)$. Let $\gamma$ be a simple closed geodesic on
$X$, and let $\gamma_1',\cdots, \gamma_k'$ be all the geodesics on $X'$
homotopic to $\gamma$ in $X$ whose hyperbolic length on $X'$ is less than $L$. Set
$l=l_X(\gamma)$ and $l_i'=l_{X'}(\gamma_i')$. Then:
\begin{itemize}
\item[(1)] $k\leq p+1$;
\item[(2)] for all $i$, $l_i'\geq l$;
\item[(3)] $\frac{1}{l}-\frac{1}{\pi}-\frac{(p+1)}{L}<\sum_{i=1}^k\frac{1}{l_i'}<\frac{1}{l}+\frac{(p+1)}{\pi}.$
\end{itemize}
\end{lemma}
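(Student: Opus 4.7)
\medskip

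\noindent\textbf{Proof plan for Lemma~\ref{lip}.} I would prove the three conclusions in the order (2), (1), (3), since each rests on the preceding one. For (2), the inclusion $X'\hookrightarrow X$ is a holomorphic embedding of hyperbolic Riemann surfaces, so by the Schwarz--Pick lemma the hyperbolic metric of $X'$ dominates the restriction of the hyperbolic metric of $X$. Combined with the fact that $\gamma$ is the unique length-minimizer in its free homotopy class on $X$, this gives $l_i'=l_{X'}(\gamma_i')\ge l_X(\gamma_i')\ge l_X(\gamma)=l$.

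For (1), the hypothesis $L<\log(3+2\sqrt2)$ is precisely the collar-lemma threshold: any two simple closed geodesics on $X'$ with hyperbolic length $<L$ are disjoint. Therefore $\gamma_1',\dots,\gamma_k'$ are pairwise disjoint simple closed geodesics on $X'$, all belonging to the same free homotopy class on $X$. Disjoint simple closed curves in a single free homotopy class on a surface are linearly ordered, with consecutive pairs cobounding an annulus in $X$. Since any two of the $\gamma_i'$ lie in distinct free homotopy classes on $X'$ (otherwise they would coincide as geodesics of $X'$), each such cobounding annulus must contain at least one point of $P$; a counting argument together with the two possible end components (which may contain no point of $P$) yields $k\le \sharp P+2\le p+1$.

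For (3), I would pass to the annular cover $\widetilde X\to X$ associated with the cyclic subgroup $\langle\gamma\rangle$. In its own hyperbolic metric $\widetilde X$ is isometric to the standard hyperbolic cylinder of core length $l$, hence has modulus $\pi/l$. Each short geodesic $\gamma_i'$ on $X'$ lifts to a simple closed geodesic in the corresponding annular cover of $X'$, whose collar contributes an embedded subannulus of modulus $\approx \pi/l_i'$. The key geometric input is the standard collar-width formula, which says that the modulus of the standard collar of a geodesic of length $\ell$ is $\pi/\ell+O(1)$ as $\ell\to 0$. These collars embed, pairwise disjointly, into the annular cover $\widetilde X$, except that each puncture of $X'$ lying in the cover can ``steal'' a bounded amount of modulus; this bounded loss is what produces the $(p+1)/\pi$ and $(p+1)/L$ error terms. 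Summing the modulus contributions and comparing with $\pi/l$ yields the two-sided inequality in (3).

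The hardest step is (3): one needs a careful bookkeeping of the modulus budget in $\widetilde X$ in the presence of punctures, because the punctures both create new short geodesics (pulling modulus away from the core) and prevent a naive decomposition of the cylinder into collars. The cleanest way I see to control this is to use the extremal length / modulus characterization of $1/\ell$ for a short geodesic, partition the annular cover into subannuli separated by the $\gamma_i'$, and apply the elementary inequality that adjoining a puncture to an annulus changes its modulus by at most $1/\pi$ in absolute value. The remaining parts (2) and (1) are essentially bookkeeping around the Schwarz--Pick lemma and the collar lemma, respectively.
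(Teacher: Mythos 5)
The paper does not prove this lemma; it cites Douady--Hubbard \cite{DH} and states it without proof, so there is no internal argument to compare against. Your plan for (2) (Schwarz--Pick plus the fact that $\gamma$ is the length-minimizer in its free homotopy class on $X$) and for (1) (collar-lemma disjointness plus the observation that any annulus in $X$ cobounded by two distinct $\gamma_i'$ must meet $P$, giving $k-1\le\sharp P<p$) are correct and standard; your bound $k\le\sharp P+2$ in (1) is not tight, but still implies $k\le p+1$. The upper bound in (3) also works as you describe: lift the standard collars of the $\gamma_i'$ into the annular cover $A_\gamma$ of $X$ for $\gamma$; they are disjoint essential subannuli of moduli $m(l_i')>\pi/l_i'-1$, so Gr\"otzsch gives $\sum(\pi/l_i'-1)<\pi/l$ and hence $\sum 1/l_i'<1/l+k/\pi\le 1/l+(p+1)/\pi$.

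The genuine gap is in the lower bound of (3). Your key step is the claim that \emph{adjoining a puncture to an annulus changes its modulus by at most $1/\pi$ in absolute value}, but this is not a correct statement: a single puncture placed near the middle of a long round annulus replaces the one large essential annulus by two whose moduli are each roughly half the original, so the modulus ``loss'' can be unboundedly large. More fundamentally, the lower bound $\sum 1/l_i'>1/l-1/\pi-(p+1)/L$ cannot come from a Gr\"otzsch-type comparison, because Gr\"otzsch only bounds a sum of moduli of disjoint subannuli from above by the ambient modulus; it gives no control from below. The actual argument (as in \cite{DH}) needs a different ingredient: one cuts $A_\gamma$ by the lifted curves $\tilde\gamma_i'$ and by suitably chosen lifts of points of $P$ into at most $p+1$ complementary annular pieces that meet $\tilde P$ (or are adjacent to the ends), and shows that each such piece, being free of any short essential geodesic in the punctured cover (all geodesics there of length $<L$ were already listed), has modulus at most $\pi/L$; summing these together with $\sum m(l_i')<\sum\pi/l_i'$ and comparing with $\text{mod}(A_\gamma)=\pi/l$ yields the stated bound. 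As written, your plan does not supply this piece-counting/threshold argument, so (3) is not yet proved.
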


The next proposition is essential for our proof.

\vspace*{5pt}
\begin{proposition}~\label{fundamental}
Let $m$ be the constant in Proposition \ref{one half}. Let $x_0\in T_f$ and $x_n=\sigma_f^n(x_0)$ for $n>0$. There exists
a constant $C(J)>0$ depending on $p$, $d$, $\epsilon_0$, $D=d_{T}(x_0,x_1)$
and $J\geq m(\log d+2D+1)$ such that if $w_E(x_{0})>C(J)$,
then $\Gamma=\Gamma_{J,x_{0}}\neq \emptyset$ is a stable multi-curve.
Moreover, if $\Gamma_\infty\neq
\emptyset$, then
$$
w_E(\Gamma_\infty, x_{m})\leq w_E(\Gamma_\infty, x_{0}).
$$
\end{proposition}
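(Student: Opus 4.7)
My plan is to proceed in three stages: establish that $\Gamma = \Gamma_{J,x_0}$ is non-empty and $f$-stable; derive a pullback inequality controlling $1/l_E(\gamma_i, x_m)$ by the $\Gamma_\infty$-block of $A_\Gamma^m$ applied to $(1/l_E(\gamma_j, x_0))$; and then exploit $\|A^m_{\Gamma_\infty}\| < 1/2$ from Proposition \ref{one half} to conclude. Non-emptiness follows from pigeonhole: the number of non-peripheral simple closed geodesics in $\widehat{\mathbb{C}} \setminus \phi^{\mu_0}(E)$ with $w_E \geq A$ is at most $p-3$ (by the collar constraint implicit in the definition of $A$), so $L_{E, x_0} \cap [A, w_E(x_0)]$ contains a gap of length $\geq J$ as soon as $w_E(x_0) > A + (p-3) J$, yielding the interval $(a,b)$ of Definition of $\Gamma_{J,x}$ and hence $\Gamma \neq \emptyset$; the hypothesis $J \geq m(\log d + 2D + 1)$ implies $J \geq \log d + 2D + 1$ and Lemma \ref{stable} upgrades this to $f$-stability. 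Splitting $\Gamma = \Gamma_0 \cup \Gamma_\infty$ by Lemma \ref{decomp} and assuming $\Gamma_\infty \neq \emptyset$, a short depth argument shows that any non-peripheral component of $f^{-m}(\gamma_j)$ homotopic to some $\gamma_i \in \Gamma_\infty$ forces $\gamma_j \in \Gamma_\infty$, so the $f^m$-pullback transfer restricted to $\Gamma_\infty$ is governed by the block $A^m_{\Gamma_\infty}$.

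For the pullback inequality, I would work with the holomorphic covering
\[
F_m = \phi^{\mu_0} \circ f^m \circ (\phi^{\mu_m})^{-1} \colon \widehat{\mathbb{C}} \setminus \phi^{\mu_m}((f^m)^{-1}(P_2)) \longrightarrow \widehat{\mathbb{C}} \setminus \phi^{\mu_0}(P_2)
\]
of degree $d^m$, under which the hyperbolic metric on the target pulls back to the hyperbolic metric on the source. Apply Lemma \ref{lip} with $X = \widehat{\mathbb{C}} \setminus \phi^{\mu_m}(E)$ and $X' = \widehat{\mathbb{C}} \setminus \phi^{\mu_m}((f^m)^{-1}(P_2))$ at the geodesic representative of each $\gamma_i \in \Gamma_\infty$: the upper estimate in Lemma \ref{lip}(3) gives $1/l_E(\gamma_i, x_m) < \sum_\alpha 1/l'_\alpha + 1/\pi + (\#P+1)/L$, with the sum running over $X'$-geodesics of length less than $L$ homotopic to $\gamma_i$ in $X$. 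For $L$ chosen small enough (depending on $\epsilon_0$ and, via Lemma \ref{eps}, on the scale $e^{-b}$), every such lift is a component of $F_m^{-1}(\eta^*_j)$ for some short target geodesic $\eta^*_j = \phi^{\mu_0}(\gamma_j)$ with $\gamma_j \in \Gamma$, which by the depth argument must lie in $\Gamma_\infty$; its $X'$-length equals $d_{i,j,\alpha} \cdot l_{P_2}(\gamma_j, x_0)$. Regrouping the sum by $\gamma_j$ and using $l_{P_2} \geq l_E$ (since $E \subset P_2$) yields
\[
\frac{1}{l_E(\gamma_i, x_m)} \leq \sum_{j:\ \gamma_j \in \Gamma_\infty} (A^m_{\Gamma_\infty})_{ij} \cdot \frac{1}{l_E(\gamma_j, x_0)} + C_0,
\]
with $C_0$ depending only on $p$, $d$, $\epsilon_0$, and $J$.

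To conclude, I would take the max over $i$ and apply Proposition \ref{one half} in the $\ell^\infty$ operator norm (\ref{norm}) to obtain $\max_i 1/l_E(\gamma_i, x_m) \leq \tfrac{1}{2} \max_j 1/l_E(\gamma_j, x_0) + C_0$. Because $\Gamma_\infty \subset \Gamma$ and every element of $\Gamma$ satisfies $w_E \geq b \geq A + J$, the right-hand maximum is at least $e^{A + J}$; choosing $C(J)$ large enough to guarantee simultaneously the gap of length $J$ and $e^{A+J} > 2 C_0$ reduces the display to $\max_i 1/l_E(\gamma_i, x_m) \leq \max_j 1/l_E(\gamma_j, x_0)$, which is exactly $w_E(\Gamma_\infty, x_m) \leq w_E(\Gamma_\infty, x_0)$. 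The hard part will be the bookkeeping in the second stage: ensuring that the short $X'$-geodesics entering the Lemma \ref{lip} sum are precisely the $F_m$-lifts of short $P_2$-geodesics, that these in turn correspond via Lemma \ref{eps} only to members of $\Gamma$ (and not to stray medium-length $E$-geodesics whose pullbacks would pollute the matrix identity), and that the constant $C_0 = C_0(J)$ remains controllable by the growing lower bound $e^{A+J}$ as $L$ is shrunk to meet these requirements.
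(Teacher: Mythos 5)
Your proposal reproduces the paper's argument through the pullback inequality --- the holomorphic covering $F_m=\phi^{\mu_0}\circ f^m\circ(\phi^{\mu_m})^{-1}$, the application of Lemma~\ref{lip}(3), the ``claim'' identifying stray short $X'$-geodesics via Lemma~\ref{eps}, and the restriction to the $\Gamma_\infty$-block using Lemma~\ref{decomp} and Proposition~\ref{one half} --- and those steps are all sound. The gap is in your closing reduction, where you claim one can ``choose $C(J)$ large enough to guarantee simultaneously the gap of length $J$ and $e^{A+J}>2C_0$.'' But $C(J)$ is only a threshold on $w_E(x_0)$; it has no influence on the relation between $e^{A+J}$ and $C_0$, which is a constraint on $J$ itself. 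That constraint is moreover false once $J$ is at all large: to make the claim about stray short $X'$-geodesics go through you must take the cutoff $L\le\min\{\epsilon_0,e^{-a}\}$, and $a$ can be as large as $A+(p-3)J$, so $C_0=\frac{1}{\pi}+\frac{C+1}{L}$ grows like $e^{A+(p-3)J}$, which overwhelms $e^{A+J}$ whenever $p\ge 5$.

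Thus the structural lower bound $\|\mathbf v\|\ge e^{b}\ge e^{A+J}$, which uses only $\Gamma_\infty\subset\Gamma_{J,x_0}$, is too weak to absorb the additive error $C_0$. The paper's proof bounds $\|\mathbf v\|$ from the hypothesis instead: reading the hypothesis as $w_E(\Gamma_\infty,x_0)\ge C(J)$ --- the form in which Proposition~\ref{fundamental} is actually invoked in the proof of Lemma~\ref{lbcurve} --- one has $\|\mathbf v\|=e^{w_E(\Gamma_\infty,x_0)}\ge e^{C(J)}$, and the explicit choice $C(J)=\max\{2C_0,\,A+(p-3)J\}$ gives $e^{C(J)}\ge e^{2C_0}\ge 2C_0$, so that $\|\mathbf v_1\|<\frac{1}{2}\|\mathbf v\|+C_0\le\|\mathbf v\|$ as required. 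To repair your argument, replace the bound $e^{A+J}$ in the last step by $e^{w_E(\Gamma_\infty,x_0)}$ and phrase the hypothesis as a lower bound on $w_E(\Gamma_\infty,x_0)$ rather than on $w_E(x_0)$; the remainder of your proof then goes through.
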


\begin{proof}

If $w_E(x_0)\geq A+(p-3)J$, then $\Gamma_{J,x_0}$ is non-empty, since $R_{x_0}$ has at most $(p-3)$ simple closed geodesics with hyperbolic length less than $e^{-A}$ (they are not homotopic to each other). From Lemma~\ref{stable}, $\Gamma=\Gamma_{J,x_0}$ is also $f$-stable.

Suppose $\Gamma_\infty\neq \emptyset$ and $A_\Gamma$ is in the form of (\ref{form}).  From Proposition~\ref{one half}, $||A^m_{\Gamma_\infty}||<1/2$.

For each $\gamma_j\in \Gamma_{J, x_0}$, let $\gamma_{i,j,\alpha}$ be any component of $f^{-m}(\gamma_j)$ homotopic to $\gamma_i$ in $\widehat{\mathbb{C}}\setminus Q$. Then $\gamma_{i,j,\alpha}$ is also homotopic to $\gamma_i$ in $\widehat{\mathbb{C}}\setminus E$. Let $g=\phi^\mu\circ f^m\circ (\phi^{\nu})^{-1}$, where
$[\mu]=x_{0}$ and $[\nu]=x_m$. Then $g$ is a rational map and
$$
g: \widehat{\mathbb{C}}\setminus \phi^{\nu}(f^{-m}(P_2))\rightarrow \widehat{\mathbb{C}}\setminus \phi^\mu(P_2)
$$ is a holomorphic covering map. Therefore
$$
l_{f^{-m}(P_2)}(\gamma_{i,j,\alpha},x_m)=d_{i,j,\alpha}l_{P_2}(\gamma_{j},x_0),
$$
where $d_{i,j,\alpha}$ is the degree of $f^m: \gamma_{i,j,\alpha}\rightarrow \gamma_j$.
We get
$$
\sum_\alpha \frac{1}{l_{f^{-m}(P_2)}(\gamma_{i,j,\alpha},x_m)}=\Big( \sum_\alpha\frac{1}{d_{i,j,\alpha}}\Big) \frac{1}{l_{P_2}(\gamma_{j},x_0)}=b_{ij}\frac{1}{l_{P_2}(\gamma_{j},x_0)},
$$
where $b_{ij}$ is the $ij$-entry of $A_\Gamma^m$.

Since $E\subset P_2$, the inclusion
$$
\iota: \widehat{\mathbb C}\setminus P_{2}\hookrightarrow \widehat{\mathbb C}\setminus E
$$
decreases the hyperbolic distances. So we have that $l_{P_2}(\gamma_{j},x_0)>l_{E}(\gamma_{j},x_0)$ for any $\gamma_j$. It follows that
$$
\sum_\alpha \frac{1}{l_{f^{-m}(P_2)}(\gamma_{i,j,\alpha},x_m)} < b_{ij}\frac{1}{l_{E}(\gamma_{j},x_0)}.
$$

From the definitions of $P_{2}$ and $E$, we know that $E\subset f^{-m}(P_2)$. Let $C=C(d,m,p)=\sharp (f^{-m}(P_2)\setminus E)$, where $p=\sharp E$.

We claim that for any $(\nu, f^{-m}(P_2))$-simple closed geodesic $\gamma$ which is homotopic to $\gamma_i$ in $\widehat{\mathbb{C}}\setminus E$, either $\gamma$ is homotopic to some $\gamma_{i,j,\alpha}$ in $\widehat{\mathbb{C}}\setminus f^{-m}(P_2)$ or
$$
l_{f^{-m}(P_2)}(\gamma,x_m)>\text{min}\{e^{-(A+PJ)}, \epsilon_0\},
$$ where $\epsilon_0$ is the constant in Lemma~\ref{eps}.

We prove the claim. In fact, if $\gamma$ is not homotopic in $\widehat{\mathbb{C}}\setminus f^{-m}(P_2)$ to some $\gamma_{i,j,\alpha}$, then $f^m(\gamma)$ is a $(\mu,P_2)$-simple closed geodesic which is not homotopic to any $\gamma_j$ in $\widehat{\mathbb{C}}\setminus P_2$. Then there are two cases: either (1) $f^m(\gamma)$ is homotopic in $\widehat{\mathbb{C}}\setminus P_2$ to some $(\mu, E)$-simple closed geodesic $\xi$ which does not belong to $\Gamma_{J,x_0}$, then we have
$$
l_{P_2}(f^m(\gamma),x_0)>l_E(f^m(\gamma),x_0)=l_E(\xi,x_0)>e^{-a}>e^{-(A+PJ)}
$$
or (2) $f^m(\gamma)$ is not homotopic in $\widehat{\mathbb{C}}\setminus P_2$ to any $(\mu,E)$-simple closed geodesic, then by Lemma~\ref{eps}, we have $$
l_{P_2}(f^m(\gamma), x_0)>\epsilon_0.
$$
Thus we have
$$
l_{f^{-m}(P_2)}(\gamma,x_m)\geq l_{P_2}(f^m(\gamma), x_0)>\text{min}\{e^{-(A+PJ)}, \epsilon_0\}.
$$
This proves the claim.

From the left hand of the inequality given by (3) in Lemma~\ref{lip}, for each $\gamma_i\in \Gamma$, we have
$$
\frac{1}{l_E(\gamma_i,x_m)}-\frac{1}{\pi}-\frac{C+1}{\text{min} \{ e^{-(A+PJ)}, \epsilon_0\} }\leq \sum_{j,\alpha} \frac{1}{l_{f^{-m}(P_2)}(\gamma_{i,j,\alpha}, x_m)}\leq \sum_j b_{ij} \frac{1}{l_{E}(\gamma_{j},x_0)}.
$$
Suppose $\Gamma_\infty=\{\gamma_1,\ \cdots,\ \gamma_s\}\subset \Gamma$. Then for each $\gamma_{i}\in\Gamma_\infty$, from the form (\ref{form}) of $A_{\Gamma}$,
$$
\frac{1}{l_E(\gamma_i, x_m)}\leq \sum_{j=1}^s b_{ij}\frac{1}{l_E(\gamma_j, x_0)} +\frac{1}{\pi}+\frac{C+1} {\text{min}\{e^{-(A+PJ)}, \epsilon_0\}}.
$$
Let
$$
{\bf v}_1=\begin{pmatrix} \frac{1}{l_E(\gamma_1, x_m)} \\ \vdots \\ \frac{1}{l_E(\gamma_s, x_m)} \end{pmatrix} \ \text{and}\  {\bf v}=\begin{pmatrix}\frac{1}{l_E(\gamma_1,x_0)} \\ \vdots \\ \frac{1}{l_E(\gamma_s,x_0)}\end{pmatrix}.
$$
Since $\|A^m_\infty\|<1/2$,
$$
\|{ \bf v}_1\|< \frac{1}{2}\|{ \bf v}\|+\frac{1}{\pi}+\frac{C+1} {\text{min}\{e^{-(A+PJ)}, \epsilon_0\}}.
$$
Define
$$
C(J)=\text{max}\Big\{ 2\Big(\frac{1}{\pi}+\frac{C+1}{\text{min}\{{e^{-(A+PJ)}, \epsilon_0}\}}\Big),A+(p-3)J\Big\}.
$$
If $w_E(\Gamma_\infty, x_0)\geq C(J)$, then we have
$$
w_E(\Gamma_\infty, x_m)< w_E(\Gamma_\infty, x_0).
$$
\end{proof}

\vspace*{5pt}
\begin{lemma}\label{lbcurve}
Let $J\geq m(\log d+2D+1)$. Suppose $w_E(x_{0})<C(J)$ and suppose
$\Gamma=\Gamma_{J,x_{k}}\neq \emptyset$ for some $k\geq 0$. Let
$E(J)=C(J)+2mD$. If $\Gamma_\infty\neq \emptyset$, then for all $n$,
$$
w_E(\Gamma_\infty, x_{n})<E(J).
$$
Moreover, if $w_E(\gamma,x_{k})\geq E(J)$, then $\gamma\in
\Gamma_0$.
\end{lemma}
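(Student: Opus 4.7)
The plan is to combine the Lipschitz continuity of $w_E(\gamma,\cdot)$ from Lemma~\ref{lipschitz} with the pullback contraction built into Proposition~\ref{fundamental}, splitting on the size of $n$. First, $\Gamma$ is $f$-stable by Lemma~\ref{stable} and decomposes as $\Gamma=\Gamma_0\cup\Gamma_\infty$ by Lemma~\ref{decomp}. For $n\le m$ the Lipschitz bound (constant $2$) combined with the weak contraction $d_T(x_0,x_n)\le nD\le mD$ yields
$$
w_E(\Gamma_\infty,x_n)\le w_E(\Gamma_\infty,x_0)+2mD\le w_E(x_0)+2mD<C(J)+2mD=E(J),
$$
using the standing hypothesis $w_E(x_0)<C(J)$.

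For $n>m$ I would argue by contradiction: let $n_0>m$ be the smallest index with $w_E(\Gamma_\infty,x_{n_0})\ge E(J)$. Lipschitz applied $m$ steps backwards gives $w_E(\Gamma_\infty,x_{n_0-m})\ge E(J)-2mD=C(J)$, and in particular $w_E(x_{n_0-m})\ge C(J)$. I would then rerun the pullback computation from the proof of Proposition~\ref{fundamental} with the shifted base point $y_0:=x_{n_0-m}$ and target $y_m=x_{n_0}$, applied to our fixed $\Gamma$. The three structural inputs — $f$-stability of $\Gamma$, the block form of $A_\Gamma$, and the bound $\|A_{\Gamma_\infty}^m\|<1/2$ from Proposition~\ref{one half} — are unchanged, so the key estimate $\|\mathbf{v}_1\|<\tfrac{1}{2}\|\mathbf{v}\|+\mathrm{const}$ goes through and translates to
$$
w_E(\Gamma_\infty,x_{n_0})\le w_E(\Gamma_\infty,x_{n_0-m})<E(J),
$$
where the second inequality uses the minimality of $n_0$. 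This contradicts $w_E(\Gamma_\infty,x_{n_0})\ge E(J)$ and proves part~(1).

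The main obstacle is precisely this transfer step, because Proposition~\ref{fundamental} is formulated for $\Gamma_{J,y_0}$ built from the moving base point, whereas our $\Gamma=\Gamma_{J,x_k}$ is fixed in advance. One must re-prove the auxiliary ``new short geodesic'' claim from the proof of Proposition~\ref{fundamental} with $x_0$ replaced by $x_{n_0-m}$: for any $(x_{n_0},f^{-m}(P_2))$-simple closed geodesic homotopic to some $\gamma_i\in\Gamma$ but not to any $\gamma_{i,j,\alpha}$, one needs a lower bound on its length, which should again follow from Lemma~\ref{eps} combined with a gap argument inside $\Gamma_{J,x_{n_0-m}}$. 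Once part~(1) is in hand, the ``moreover'' assertion is immediate: the condition $J\ge m(\log d+2D+1)$ forces $E(J)$ to exceed the defining gap top $b$ of $\Gamma_{J,x_k}$, so any simple closed geodesic $\gamma$ with $w_E(\gamma,x_k)\ge E(J)$ automatically belongs to $\Gamma=\Gamma_0\cup\Gamma_\infty$, and part~(1) excludes $\gamma\in\Gamma_\infty$, hence $\gamma\in\Gamma_0$.
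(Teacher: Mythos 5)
Your argument follows the same route as the paper's: contradiction at the first index $n_0$ where the bound fails, a Lipschitz step back $m$ units to reach $w_E(\Gamma_\infty,x_{n_0-m})\geq C(J)$, and then Proposition~\ref{fundamental} applied at the shifted base point, together with minimality of $n_0$, to force $w_E(\Gamma_\infty,x_{n_0})<E(J)$. You are actually more explicit than the paper on two points: the case $n\leq m$ (which the paper leaves implicit) and the fact that Proposition~\ref{fundamental} is stated for the multi-curve $\Gamma_{J,y_0}$ built at the shifted base point $y_0=x_{n_0-m}$, not for the fixed $\Gamma=\Gamma_{J,x_k}$, so one must re-run its internal estimate (Gantmacher block form, $\|A^m_{\Gamma_\infty}\|<1/2$, and the ``new short geodesic'' claim) for the fixed $\Gamma$; the paper simply cites Proposition~\ref{fundamental} as a black box here.

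One small misstep in your ``moreover'' argument: you claim $J\geq m(\log d+2D+1)$ forces $E(J)\geq b$, where $b$ is the top of the defining gap $(a,b)$ for $\Gamma_{J,x_k}$. That inequality is not guaranteed: one only knows $a<A+(p-3)J$, hence $b<A+(p-2)J$, while $E(J)\geq A+(p-3)J+2mD$, and since $J>2mD$ the difference $b-E(J)$ can be positive. The correct (and the paper's) justification for $\gamma\in\Gamma$ is: $w_E(\gamma,x_k)\geq E(J)\geq C(J)\geq A+(p-3)J>a$, and since $(a,b)\cap L_{E,x_k}=\emptyset$ and $w_E(\gamma,x_k)\in L_{E,x_k}$, the value cannot land inside $(a,b)$, so $w_E(\gamma,x_k)\geq b$. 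From there your conclusion $\gamma\in\Gamma_0$ follows exactly as you say.
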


\begin{proof}
We prove the first inequality by contradiction. Suppose there is an
$n>0$ such that $w_E(\Gamma_\infty, x_{n})\geq C(J)+2mD$. Suppose
$n_0$ is the first integer having this property. Then we have
$w_E(\Gamma_\infty, x_{n_{0}-m})\geq C(J)$. Then by Proposition~\ref{fundamental}
and the fact that $n_0$ is the first integer such
that $w_E(\Gamma_\infty, x_{n_{0}})\geq C(J)+2mD$, we have
$$
w_E(\Gamma_\infty, x_{n_{0}})\leq w_E(\Gamma_\infty,
x_{n_{0}-m})<C(J)+2mD.
$$
This is a contradiction.

If $w_E(\gamma,x_{k})\geq E(J)>C(J)\geq A+(p-3)J$, then $\gamma\in
\Gamma_{J,x_{k}}=\Gamma$ since there are at most $p-3$ simple closed
curves in $R_{x_k}$ such that $w_E(\gamma, x_k)>A$. But $\gamma\notin\Gamma_\infty$ because of the first
conclusion and the assumption. Therefore, $\gamma\in \Gamma_0$.
\end{proof}

\section{Lower bound for $\Gamma_0$}

In order to get the lower bound for $\Gamma_0$, we need the following
definition.

\vspace*{5pt}
\begin{definition}
Let $\kappa$ be a real number. A sequence $\{a_n \}_{n=0}^\infty$ of
real numbers is called $\kappa$-quasi-nondecreasing if for all
$n_1<n_2$ we have $a_{n_{2}}-a_{n_{1}}\geq \kappa$. A sequence is
called quasi-nondecreasing if it is $\kappa$-quasi-nondecreasing for
some $\kappa$.
\end{definition}

It is easy to check that the following two properties are true.

\vspace*{5pt}
\begin{property}~\label{pr1}
Suppose $\{a_n\}_{n=0}^\infty$ and  $\{b_n\}_{n=0}^\infty$ are two
sequences. If $\{a_n\}_{n=0}^\infty$ is $\kappa$-quasi-nondecreasing
and if $|a_n-b_n|<r$ for all $n$, then $\{b_n\}$ is
$(\kappa-2r)$-quasi-nondecreasing.
\end{property}

\vspace*{5pt}
\begin{property}~\label{pr2}
Suppose $\{a_n\}$ is quasi-nondecreasing and unbounded. Then
$a_n\rightarrow +\infty$ as $n\rightarrow +\infty$.
\end{property}

Recall that any $x=[\mu]\in T_f$ represents a complex structure on $ \widehat{\mathbb{C}} \setminus Q$, which makes
$ \widehat{\mathbb{C}}\setminus Q$ a hyperbolic Riemann surface $R_x$. For any simple closed geodesic
$\gamma$ on $R_x$, let $A(\gamma, x)$ be the Riemann surface, conformally
isomorphic to an annulus, obtained by taking the unit disk ${\mathbb D}$
modulo a $\mathbb{Z}$-subgroup of the fundamental group of $R_x$
generated by $\gamma$.  It is a covering space of $R_{x}$. The core curve of $A(\gamma, x)$ is a
geodesic of length $l_{Q}(\gamma,x)$ and
\begin{equation}~\label{ann}
\text{mod}(A(\gamma, x))=\frac{\pi}{l_{Q}(\gamma,x)},
\end{equation}
where $\text{mod}(A)$ means the modulus of an annulus $A$.

If $\gamma$ is a simple closed geodesic of hyperbolic length $l$ on the Riemann
surface $R_x$, then there is an embedding annulus $a(\gamma, x)$ of
modulus $m(l)$ which is continuous and decreasing and satisfies
$$
\frac{\pi}{l}-1<m(l)<\frac{\pi}{l}.
$$
Thus for all $x\in T_f$, we have
\begin{equation}~\label{modinq}
\text{mod}(A(\gamma, x))-1<\text{mod}(a(\gamma,
x))<\text{mod}(A(\gamma,x)).
\end{equation}

We need the following technical lemma:

\vspace*{5pt}
\begin{lemma}~\label{tech}
If $t\geq 1$, then $\log(t+1)-1<\log t$.
\end{lemma}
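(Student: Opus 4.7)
The plan is to rearrange the desired inequality into a form where monotonicity of $\log$ does all the work. The statement $\log(t+1)-1<\log t$ is equivalent to $\log(t+1)-\log t < 1$, i.e., $\log\!\bigl(1+\tfrac{1}{t}\bigr) < 1$.

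Since $t\geq 1$ gives $\tfrac{1}{t}\leq 1$, we have $1+\tfrac{1}{t}\leq 2$. Because $2<e$ and $\log$ is strictly increasing on $(0,\infty)$, we conclude
\[
\log\!\Bigl(1+\tfrac{1}{t}\Bigr)\leq \log 2 < \log e = 1,
\]
which is exactly the required estimate. Exponentiating back yields $\log(t+1)-1<\log t$.

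There is no real obstacle here; the only thing to watch is that the bound $1+1/t\leq 2$ uses the hypothesis $t\geq 1$ (without it, the inequality would fail for small positive $t$, e.g.\ $t<1/(e-1)$). So the proof is essentially one line once one writes $\log(t+1)-\log t=\log(1+1/t)$ and compares with $\log e=1$.
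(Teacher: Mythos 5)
Your proof is correct and is essentially identical to the paper's: the paper also writes $\log(t+1)-\log t=\log\bigl(\tfrac{t+1}{t}\bigr)\leq \log 2<1$, which is exactly your $\log(1+1/t)\leq\log 2<1$ bound.
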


\begin{proof}
For $t\geq 1$,
$$
\log(t+1)-\log t=\log(\frac{t+1}{t})\leq\log 2< 1.
$$
\end{proof}

If $w_Q(\gamma, x)\geq \log \frac{2}{\pi}=-0.451582705\cdots$, then
we have $\text{mod}(A(\gamma,x))-1\geq 1$. By taking logarithms on all
terms of Inequality (\ref{modinq}) and by applying Lemma~\ref{tech}
and Equation~(\ref{ann}), we have
$$
\log\pi-1+w_Q(\gamma,x)<\log\text{mod}(a(\gamma,x))<\log\pi+w_Q(\gamma,x).
$$
It follows that, if $w_Q(\gamma, x)\geq \log \frac{2}{\pi}$, then
\begin{equation}~\label{modinq2}
|\log\text{mod}(a(\gamma,x))-w_Q(\gamma,x)|<\log\pi.
\end{equation}

Given a multi-curve $\Gamma$, we denote vectors of moduli
$(\text{mod}(A(\gamma, x)))$ and $(\text{mod}(a(\gamma, x)))$ by
$\text{mod}(A(\Gamma, x))$ and $\text{mod}(a(\Gamma,x))$
respectively. Define
$$
\underline{\text{mod}}(A(\Gamma,x))=\text{min}_{\gamma\in
\Gamma}\{\text{mod}(A(\gamma,x))\}
$$
and
$$
\underline{\text{mod}}(a(\Gamma,x))=\text{min}_{\gamma\in
\Gamma}\{\text{mod}(a(\gamma,x))\}.
$$

\vspace*{5pt}
\begin{lemma}~\label{min}
Let $\beta$ be the constant in Proposition~\ref{beta}.
Let $\Gamma$ be an irreducible multi-curve. Suppose
the leading eigenvalue of the matrix
$A_\Gamma$ is greater than or equal to $1$. Then for any $x_{0} \in T_f$ and
$x_n=\sigma_f^n(x_{0})$, $n>0$,
\begin{itemize}
\item[(1)] $\underline{\text{mod}}(A(\Gamma,x_n))\geq
\beta\underline{\text{mod}}(a(\Gamma,x_{0}))$ and
\item[(2)] $\underline{\text{mod}}(a(\Gamma,x_n))\geq
\beta\underline{\text{mod}}(a(\Gamma,x_{0}))-1$.
\end{itemize}
\end{lemma}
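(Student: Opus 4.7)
The plan is to lift the extremal collars $a(\gamma_j,x_0)$ via $f^n$ and exploit additivity of moduli for disjoint homotopic annuli, together with the Perron eigenvector from Proposition~\ref{beta}. Concretely, for each $\gamma_j\in\Gamma$, consider the embedded annulus $a(\gamma_j,x_0)\subset R_{x_0}$. Viewing $f^n$ as the holomorphic map $R_{x_n}\to R_{x_0}$ (after straightening by quasiconformal homeomorphisms), each component of $f^{-n}(a(\gamma_j,x_0))$ is again an annulus, mapping as a covering of some degree $d$ and hence having modulus $\mathrm{mod}(a(\gamma_j,x_0))/d$. For every component $\alpha$ of $f^{-n}(\gamma_j)$ homotopic to $\gamma_i$ in $\widehat{\mathbb C}\setminus Q$, the corresponding annular lift of $a(\gamma_j,x_0)$ is homotopic to $\gamma_i$ in $R_{x_n}$.

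Next I would assemble these lifts into a disjoint family. Different $a(\gamma_j,x_0)$ are disjoint in $R_{x_0}$ because $\Gamma$ is a multi-curve, and different components of $f^{-n}$ of disjoint sets are disjoint; so the collection of all lifts (indexed by $j$ and by components $\alpha$ homotopic to $\gamma_i$) consists of pairwise disjoint annuli, all homotopic to $\gamma_i$. By the classical Gr\"otzsch-type fact that any disjoint family of annuli homotopic to a common core embeds into the maximal homotopic annulus with total modulus at most that of the maximal one, we get
$$
\mathrm{mod}(A(\gamma_i,x_n))\ \geq\ \sum_{j,\alpha}\frac{\mathrm{mod}(a(\gamma_j,x_0))}{d^{(n)}_{i,j,\alpha}}\ \geq\ \sum_j (A_\Gamma^n)_{ij}\,\mathrm{mod}(a(\gamma_j,x_0)),
$$
where the last inequality comes from the fact that the matrix entry $(A_\Gamma^n)_{ij}$ accounts only for those preimage components whose intermediate images all stay homotopic to elements of $\Gamma$, and thus provides a lower bound for the true sum of $1/d^{(n)}_{i,j,\alpha}$.

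Now I would apply Proposition~\ref{beta}: since $\Gamma$ is irreducible with $\lambda(A_\Gamma)\geq 1$, there is a positive Perron eigenvector $\mathbf v$ with $\|\mathbf v\|=1$ and every coordinate $v_j\geq\beta$. Because $v_j\leq 1$ we have $\mathbf v\leq \mathbf 1$ entrywise, so
$$
(A_\Gamma^n\mathbf 1)_i\ \geq\ (A_\Gamma^n\mathbf v)_i\ =\ \lambda(A_\Gamma)^n\,v_i\ \geq\ \beta.
$$
Letting $s_0:=\underline{\mathrm{mod}}(a(\Gamma,x_0))$, the bound in the previous paragraph gives
$$
\mathrm{mod}(A(\gamma_i,x_n))\ \geq\ s_0\,(A_\Gamma^n\mathbf 1)_i\ \geq\ \beta s_0,
$$
which is conclusion (1). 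For (2), I just apply the second half of inequality~(\ref{modinq}), namely $\mathrm{mod}(a(\gamma_i,x_n))>\mathrm{mod}(A(\gamma_i,x_n))-1$, and substitute.

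The main obstacle is justifying the annular lifting and the disjoint embedding into $A(\gamma_i,x_n)$ cleanly when $\Gamma$ is only irreducible, not necessarily $f$-stable: some components of $f^{-n}(\gamma_j)$ may wander through homotopy classes outside $\Gamma$ at intermediate steps. The point is that we only need a \emph{lower} bound, so restricting attention to lifts whose entire history remains in $\Gamma$ is allowed, and this restriction is exactly what the matrix power $A_\Gamma^n$ counts. Once this bookkeeping is set up, the rest is the Perron estimate and the universal comparison (\ref{modinq}) between maximal annulus and embedded collar.
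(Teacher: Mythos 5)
Your proposal is correct and follows essentially the same route as the paper: lift the embedded collars $a(\gamma_j,x_0)$ by $f^n$, use disjointness and Gr\"otzsch to bound $\mathrm{mod}(A(\gamma_i,x_n))$ from below by $\sum_j (A_\Gamma^n)_{ij}\,\mathrm{mod}(a(\gamma_j,x_0))$, and then invoke the Perron eigenvector with coordinates $\geq\beta$ together with $\lambda(A_\Gamma)\geq 1$; the final comparison to get (2) is the same application of inequality~(\ref{modinq}). The only cosmetic difference is that you apply $\mathbf v\leq\mathbf 1$ to bound $A_\Gamma^n\mathbf 1\geq A_\Gamma^n\mathbf v$, whereas the paper bounds $\mathrm{mod}(a(\Gamma,x_0))\geq s_0\mathbf v$ at the outset and pushes the eigenvector through, but these are the same estimate.
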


\begin{proof}
Since for any $n$, $f^n:  \widehat{\mathbb{C}}\rightarrow  \widehat{\mathbb{C}}$ is a branched covering, we can similarly define the linear map $f^n_\Gamma: \mathbb{R}^\Gamma\rightarrow \mathbb{R}^\Gamma$.
Let $B$ be the corresponding matrix for the linear map $f^n_\Gamma$ with the basis $\Gamma$. It is easy to see that $B\geq A_\Gamma^n$.

Let $\bf{v}$ be the unique positive eigenvector of $A_{\Gamma}$ with $\| \bf{v}\|=1$. Let $\bf{1}$ denote the vector whose coordinates are all equal to $1$.
Then
$$\text{mod}(a(\Gamma,x_{0}))\geq \underline{\text{mod}}(a(\Gamma,x_{0}){\bf 1} \geq \underline{\text{mod}}(a(\Gamma,x_{0})){\bf v}. $$
For any $n\geq 1$, let $\gamma^n_{i,j,\alpha}$ be the components of
$f^{-n}(\gamma_j)$ homotopic to $\gamma_i$, and $a^n_{i,j,\alpha}$
be the components of $f^{-n}(a(\gamma_j,x_{0}))$ homotopic to $\gamma_i$.
Then
$$
\text{mod}(a^n_{i,j,\alpha})=\text{mod}(a(\gamma_j,x_{0}))
/d^n_{i,j,\alpha},
$$ where
$d^n_{i,j,\alpha}=\text{deg}f^n|_{\gamma^n_{i,j,\alpha}}$. Since
$a^n_{i,j,\alpha}$ are disjoint annuli homotopic to
the curve $\gamma_i$,
we have
$$
\sum_{\alpha,j} \text{mod}(a^n_{i,j,\alpha})\leq
\text{mod}(A(\gamma_i,x_n)).
$$
(One can obtain this inequality by lifting them to the covering space $A(\gamma_i,x_n)$ of $R_{x_n}$
and then by using Gr\"otzsch's inequality.)
Consequently we get
\begin{eqnarray*}
\text{mod}(A(\Gamma,x_n))&\geq& \text{mod} (a(\Gamma, x_{n})) \geq B\text{mod}(a(\Gamma,x_{0})) \\
                         &\geq& A_\Gamma^n\text{mod}(a(\Gamma,x_{0})) \geq A_\Gamma^n\underline{\text{mod}}(a(\Gamma,x_{0}))\bf{v}\\
                         &\geq& \underline{\text{mod}}(a(\Gamma,x_{0})){\bf{v}} \geq \beta \underline{\text{mod}}(a(\Gamma,x_{0}))\bf{1}
\end{eqnarray*}
Hence for all $\gamma\in \Gamma$, we have
$\text{mod}(A(\gamma,x_n))\geq \beta
\underline{\text{mod}}(a(\Gamma,x_{0}))$.
The second conclusion follows the first one and Inequality (\ref{modinq}).
\end{proof}

\vspace*{5pt}
\begin{lemma}~\label{tech2} If $a,\ b>0$, $\beta>0$, and $e^a\geq \beta e^ b -1$,
then $a-b\geq \log\beta-1$. \end{lemma}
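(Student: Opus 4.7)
The plan is to prove this purely elementary inequality by combining the hypothesis with the positivity of $a$. First I would reformulate the conclusion: after exponentiating, $a - b \geq \log\beta - 1$ is equivalent to $e^{a-b} \geq \beta/e$. So the task reduces to showing that $e^{a-b}$ is at least this fraction of $\beta$.

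Next I would extract two lower bounds for the quantity $e^{a-b}$. Dividing the hypothesis $e^a \geq \beta e^b - 1$ through by $e^b$ gives
\[
e^{a-b} \;\geq\; \beta - e^{-b}.
\]
Independently, the assumption $a > 0$ yields $e^a > 1$, and therefore $e^{a-b} > e^{-b}$. Adding the two inequalities and dividing by $2$ produces
\[
e^{a-b} \;>\; \frac{(\beta - e^{-b}) + e^{-b}}{2} \;=\; \frac{\beta}{2}.
\]
Taking logarithms yields $a - b > \log\beta - \log 2$, and since $\log 2 < 1$, this is strictly stronger than the desired estimate $a - b \geq \log\beta - 1$.

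There is essentially no obstacle here; the lemma is purely a calculus exercise. Its role in the sequel is evidently to convert the multiplicative modulus estimate of Lemma~\ref{min}, namely $\underline{\mathrm{mod}}(a(\Gamma,x_n)) \geq \beta\,\underline{\mathrm{mod}}(a(\Gamma,x_0)) - 1$, into an additive statement on logarithmic scales (the quantities $w_Q$ and $w_E$), so that it can be fed into the quasi-nondecreasing framework set up via Properties~\ref{pr1} and~\ref{pr2} at the beginning of this section. Once translated through the modulus-versus-length comparison~(\ref{modinq2}), the bound $a - b \geq \log\beta - 1$ will play the role of the constant $\kappa$ governing quasi-nondecrease of $\{w_E(\gamma, x_n)\}$ along the orbit for $\gamma \in \Gamma_0$.
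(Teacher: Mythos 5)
Your proof is correct, and it takes a cleaner route than the paper. The paper splits into two cases depending on whether $\beta e^b - 1 \geq 1$: in the first case it invokes the auxiliary Lemma~\ref{tech} (that $\log(t+1) - 1 < \log t$ for $t\geq 1$) to absorb the subtracted $1$, and in the second case it uses $a>0$ directly. Your averaging trick --- adding the two lower bounds $e^{a-b} \geq \beta - e^{-b}$ (from the hypothesis) and $e^{a-b} > e^{-b}$ (from $a>0$) to cancel the $e^{-b}$ terms --- handles both regimes at once, avoids Lemma~\ref{tech} entirely, and in fact delivers the strictly stronger bound $a-b > \log\beta - \log 2$. The paper's approach has the minor virtue that it reuses Lemma~\ref{tech}, which was already stated for the modulus-versus-$w_Q$ comparison~(\ref{modinq2}); but as a self-contained argument yours is shorter and tighter. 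Your closing paragraph correctly identifies the role of the lemma: it translates the multiplicative modulus estimate of Lemma~\ref{min}(2) into an additive drop on the $\log\,\mathrm{mod}$ scale, which is exactly what the quasi-nondecreasing machinery of Properties~\ref{pr1}--\ref{pr2} requires in Lemma~\ref{gamma}.
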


\begin{proof} If $\beta\exp b -1\geq 1$, then by Lemma~\ref{tech}, we have $$\log
(\beta\exp b -1)\geq \log(\beta e^b)-1\geq \log\beta+b-1.$$ Hence by
the assumption, we have $a-b\geq \log\beta-1$.

If $\beta\exp b -1<1$,
then $b<\log 2-\log\beta$. Since $a>0$,
$$
a-b>0-b=-b>\log\beta-\log 2>\log\beta-1.
$$
\end{proof}

For any $x\in T_f$ and any multi-curve $\Gamma$, define
$$
\underline{w}(\Gamma,x)=\text{min}_{\gamma\in
\Gamma}w_Q(\gamma,x).
$$

\vspace*{5pt}
\begin{lemma}~\label{gamma}
Suppose $\Gamma$ is an irreducible multi-curve and suppose
the leading eigenvalue of the matrix $A_{\Gamma}$ is greater than or equal to $1$.
For any $x_{0}\in T_{f}$, if $\underline{w}(\Gamma,x_{0})\geq
\log(3/\beta)+\log\pi$, then the sequence $\{\underline{w}(\Gamma,x_n)\}_{n=0}^{\infty}$, where $x_n=\sigma_f^n(x_{0})$,
is $(\log\beta-1-2\log\pi)$-quasi-nondecreasing.
\end{lemma}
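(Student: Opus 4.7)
The plan is to combine Lemma~\ref{min} with the modulus--length identities to turn a multiplicative modulus estimate into an additive estimate on $\underline{w}$, and then close the argument with Lemma~\ref{tech2}. The key translations I will use are the following. Since $l_Q(\gamma,x) = e^{-w_Q(\gamma,x)}$, formula (\ref{ann}) and the monotonicity of the exponential give
\[
\underline{\text{mod}}(A(\Gamma,x)) \;=\; \pi e^{\underline{w}(\Gamma,x)},
\]
and then (\ref{modinq}) yields the working lower bound
\[
\underline{\text{mod}}(a(\Gamma,x)) \;>\; \pi e^{\underline{w}(\Gamma,x)} - 1.
\]

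Before I may invoke Lemma~\ref{tech2} I must bootstrap a uniform positivity, namely $\underline{w}(\Gamma,x_n) > -\log\pi$ for every $n \ge 0$. At $n=0$ this is immediate from the hypothesis $\underline{w}(\Gamma,x_0) \ge \log(3/\beta)+\log\pi$. For $n \ge 1$, Lemma~\ref{min}(1) applied at $x_0$ combined with the two translations above gives
\[
\pi e^{\underline{w}(\Gamma,x_n)} \;\ge\; \beta\bigl(\pi e^{\underline{w}(\Gamma,x_0)} - 1\bigr) \;\ge\; 3\pi^2 - \beta \;>\; \pi,
\]
so $\underline{w}(\Gamma,x_n) > 0$. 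This uniform bound is precisely the reason for the specific numerical form of the hypothesis on $\underline{w}(\Gamma,x_0)$.

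For the main inequality, fix arbitrary $n_1 < n_2$. Since irreducibility of $\Gamma$ and $\lambda(A_\Gamma) \ge 1$ are intrinsic to $\Gamma$ and independent of the base point, I may reapply Lemma~\ref{min}(1) starting from $x_{n_1}$ to obtain
\[
\pi e^{\underline{w}(\Gamma,x_{n_2})} \;\ge\; \beta\bigl(\pi e^{\underline{w}(\Gamma,x_{n_1})} - 1\bigr) \;\ge\; \beta\pi e^{\underline{w}(\Gamma,x_{n_1})} - 1,
\]
the last step using $\beta \le 1$. Setting $a = \underline{w}(\Gamma,x_{n_2}) + \log\pi$ and $b = \underline{w}(\Gamma,x_{n_1}) + \log\pi$, which are both positive by the bootstrap, this is exactly $e^a \ge \beta e^b - 1$, so Lemma~\ref{tech2} yields $a - b \ge \log\beta - 1$; equivalently
\[
\underline{w}(\Gamma,x_{n_2}) - \underline{w}(\Gamma,x_{n_1}) \;\ge\; \log\beta - 1 \;\ge\; \log\beta - 1 - 2\log\pi,
\]
which is the $\kappa$-quasi-nondecreasing property claimed.

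The only real obstacle I anticipate is the bootstrap step, because without it the hypothesis of Lemma~\ref{tech2} fails; once $a,b>0$ is in hand the remainder is a short chain of inequalities drawn straight from results already proved in the excerpt. Observe that the proof actually produces the sharper constant $\log\beta-1$; I expect the slightly weaker constant $\log\beta-1-2\log\pi$ in the statement reflects a cruder alternative route through (\ref{modinq2}) rather than (\ref{modinq}), but either way the conclusion holds.
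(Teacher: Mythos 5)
Your proof is correct and rests on the same two ingredients as the paper's---Lemma~\ref{min} and Lemma~\ref{tech2}---but your bookkeeping is cleaner and yields the sharper constant $\log\beta-1$. The paper tracks $y_n=\log\underline{\text{mod}}(a(\Gamma,x_n))$, proves $\underline{\text{mod}}(a(\Gamma,x_n))\ge 2$, applies Lemma~\ref{min}(2) and Lemma~\ref{tech2} to make $\{y_n\}$ a $(\log\beta-1)$-quasi-nondecreasing sequence, and then transfers to $\underline{w}$ via the two-sided estimate (\ref{modinq2}) and Property~\ref{pr1}, which costs $2\log\pi$. You instead track $\log\underline{\text{mod}}(A(\Gamma,x_n))=\log\pi+\underline{w}(\Gamma,x_n)$, which is an exact identity, use Lemma~\ref{min}(1) together with only the one-sided half of (\ref{modinq}) (to lower-bound $\underline{\text{mod}}(a)$ by $\underline{\text{mod}}(A)-1$ inside the chain of inequalities), and apply Lemma~\ref{tech2} directly; no lossy translation is needed at the end, so the $2\log\pi$ term never appears. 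Your bootstrap step establishing positivity of $a$ and $b$ before invoking Lemma~\ref{tech2} plays exactly the role of the paper's derivation that $\underline{\text{mod}}(a(\Gamma,x_n))\ge 2$, and your closing remark correctly identifies the conversion via (\ref{modinq2}) and Property~\ref{pr1} as the source of the slack in the constant stated in the lemma.
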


\begin{proof} For $\underline{w}(\Gamma,x_0)\geq
\log(3/\beta)+\log\pi>\log\frac{2}{\pi}$,
by Inequality (\ref{modinq2}), we have
$$
\log \underline{\text{mod}}(a(\Gamma,x_0))\geq \log(\frac{3}{\beta}).
$$
That is,
$\underline{\text{mod}}(a(\Gamma,x_0))\geq 3/\beta$. So
$\beta\underline{\text{mod}}(a(\Gamma,x_0))-1\geq 2$. By Lemma~\ref{min}, we
have that for all $n\geq 0$,
\begin{equation}~\label{modinq3}
\underline{\text{mod}}(a(\Gamma,x_n))\geq 2.
\end{equation}
Now consider the sequence $y_n=\log
\underline{\text{mod}}(a(\Gamma,x_n))$. Choose arbitrarily
$n_2>n_1\geq 0$,  and let $a=y_{n_2},\ b=y_{n_1}$ and $n=n_2-n_1$.
By Lemma~\ref{min}, we have $e^a\geq \beta e^b-1$.
 Applying Lemma~\ref{tech2}, we
have $a-b\geq \log\beta-1$, so the sequence $\{y_n\}$ is a
$(\log\beta -1)$-quasi-nondecreasing.

By Inequalities (\ref{modinq}) and (\ref{modinq3}),
we have $\underline{\text{mod}}(A(\Gamma,x))\geq 2$.
This implies that $\log \pi+\underline{w}(\Gamma,x_n)\geq
\log 2$. That is, $\underline{w}(\Gamma,x_n)\geq \log(2/\pi)$.
Since $\text{mod}(a(\gamma,x_n))$ is continuous and decreasing with $l_Q(\gamma,x_n)$,
we obtain
$$
\underline{\text{mod}}(a(\Gamma,x_n))=\text{mod}(a(\gamma, x_{n}))\quad \hbox{and}\quad
\underline{w}(\Gamma,x_n)=w_{Q} (\gamma, x_{n})
$$
at the same $\gamma\in \Gamma$. This further implies that
$$
|y_n-\underline{w}(\Gamma,x_n)|<\log\pi.
$$
From Property~\ref{pr1}, we finally have that $\underline{w}(\Gamma,x_n)$ is $(\log\beta-1-2\log\pi)$-quasi-nondecreasing.
\end{proof}

\vspace*{5pt}
\begin{lemma}~\label{modinq4}
Let $k\geq 1$ be an integer. For any $x_{0}\in T_{f}$, let $x_{n}=\sigma_{f}^{n}(x_{0})$ for $n>0$. Let $D=d_{T}(x_{0},x_1)$.
If $\gamma_1, \gamma_2$ are non-peripheral curves in
$ \widehat{\mathbb{C}}\setminus Q$ such that some component of $f^{-k}(\gamma_1) $ is
homotopic to $\gamma_2$, then
$$
w_{Q}(\gamma_2,x_{0})\geq w_{Q}(\gamma_1,x_{0})-k(\log d+2D).
$$
\end{lemma}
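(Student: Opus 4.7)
The plan is to split the desired inequality through the intermediate Teichm\"uller point $x_k$: first compare $w_{Q}(\gamma_1,x_0)$ with $w_{Q}(\gamma_2,x_k)$ using the covering structure of $f^k$, then compare $w_{Q}(\gamma_2,x_k)$ with $w_{Q}(\gamma_2,x_0)$ using Lipschitz dependence of hyperbolic length on the Teichm\"uller parameter. Writing $x_0=[\mu_0]$ and $x_k=[\mu_k]$, I would choose representatives so that $\mu_k=(f^k)^{*}\mu_0$, so that $g=\phi^{\mu_0}\circ f^k\circ (\phi^{\mu_k})^{-1}$ is a rational map of degree $d^k$. The shielding conditions (i)--(v) of $\Lambda$ imply $f(Q)\subset Q$ and hence $Q\subset f^{-k}(Q)$, so that
$$
g:\widehat{\mathbb C}\setminus\phi^{\mu_k}(f^{-k}(Q))\longrightarrow \widehat{\mathbb C}\setminus\phi^{\mu_0}(Q)
$$
is a holomorphic covering map between hyperbolic Riemann surfaces.

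Letting $\widetilde{\gamma}_2$ be the component of $f^{-k}(\gamma_1)$ homotopic to $\gamma_2$ in $\widehat{\mathbb C}\setminus Q$, its image $\phi^{\mu_k}(\widetilde{\gamma}_2)$ covers $\phi^{\mu_0}(\gamma_1)$ under $g$ with some degree $d'\leq d^k$. Because holomorphic coverings are local isometries for the Poincar\'e metric, the corresponding geodesic lengths satisfy
$$
l_{f^{-k}(Q)}(\gamma_2,x_k)=d'\,l_Q(\gamma_1,x_0)\leq d^{k}\,l_Q(\gamma_1,x_0).
$$
The Schwarz lemma applied to the inclusion $\widehat{\mathbb C}\setminus\phi^{\mu_k}(f^{-k}(Q))\hookrightarrow \widehat{\mathbb C}\setminus\phi^{\mu_k}(Q)$ (valid because $Q\subset f^{-k}(Q)$) then gives $l_Q(\gamma_2,x_k)\leq l_{f^{-k}(Q)}(\gamma_2,x_k)$; taking $-\log$ yields
$$
w_Q(\gamma_2,x_k)\geq w_Q(\gamma_1,x_0)-k\log d.
$$

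For the second comparison, weak contraction of $\sigma_f$ combined with the triangle inequality gives
$$
d_T(x_0,x_k)\leq\sum_{i=0}^{k-1}d_T(\sigma_f^{i}(x_0),\sigma_f^{i}(x_1))\leq k\,d_T(x_0,x_1)=kD,
$$
and then Lemma~\ref{lipschitz} (whose Wolpert-type proof adapts verbatim from finite $Z$ to $Z=Q$) produces $|w_Q(\gamma_2,x_0)-w_Q(\gamma_2,x_k)|\leq 2kD$. Adding this to the previous bound yields $w_Q(\gamma_2,x_0)\geq w_Q(\gamma_1,x_0)-k(\log d+2D)$. The only point that genuinely requires care is the verification that $f(Q)\subset Q$, which draws on all of the shielding properties (i)--(v), together with the standard identification of $g$ as a rational map for the chosen representatives; once the covering diagram is in place, the combination of the covering-degree formula, the Schwarz lemma, and the Wolpert length estimate is routine.
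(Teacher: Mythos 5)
Your proof is correct and follows essentially the same route as the paper's: pass through $x_k$, use the covering $f^k\colon Y=f^{-k}(R_{x_0})\to R_{x_0}$ to compare $l_{f^{-k}(Q)}(\gamma_2,x_k)$ with $l_Q(\gamma_1,x_0)$, then the inclusion $Y\hookrightarrow R_{x_k}$ (Schwarz) to drop to $l_Q(\gamma_2,x_k)$, and finally the Lipschitz estimate for $w_Q(\gamma_2,\cdot)$ together with $d_T(x_0,x_k)\leq kD$. The only genuine difference is presentational: you spell out the rational-map description $g=\phi^{\mu_0}\circ f^k\circ(\phi^{\mu_k})^{-1}$ and explicitly verify $f(Q)\subset Q$ from the shielding ring conditions, whereas the paper simply sets $Y=f^{-k}(R_{x_0})$ and takes the inclusion $Y\subset R_{x_k}$ for granted; you also note that the Wolpert-type Lipschitz estimate of Lemma 4.3 must be read for $Z=Q$ rather than a finite subset, which the paper glosses over but implicitly uses. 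These extra verifications are the right things to check and do not alter the argument; your exact identity $l_{f^{-k}(Q)}(\gamma_2,x_k)=d'\,l_Q(\gamma_1,x_0)$ with $d'\leq d^k$ is in fact slightly sharper than the inequality the paper records.
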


\begin{proof} Let $Y=f^{-k}(R_{x_{0}})$. Then $Y\subset R_{x_k}$ is a Riemann surface and
$f^k: Y\rightarrow R_{x_{0}}$ is a holomorphic covering map of degree $d^k$. Then
$$
l_{Y}(\gamma_2)\leq d^k l_Q(\gamma_1,x_{0}).
$$
Since the inclusion map
$\iota: Y\hookrightarrow R_{x_k}$ decreases the hyperbolic lengths,
$$
l_{Q}(\gamma_2,x_k)\leq d^kl_{Q}(\gamma_1,x_{0}).
$$
It follows that
$$
w_Q(\gamma_2,x_k)>w_Q(\gamma_1,x_{0})-k\log d.
$$
Since $\sigma_f$ decreases the Teichm\"uller distance $d_{T}$,
$$
d_T(x_i,x_{i+1})\leq d_T(x_{0},x_1)=D.
$$
The map $\gamma \mapsto w_Q(\gamma,
x)$ for any $x\in T_{f}$ is a Lipschitz function with Lipschitz constant $2$ (see Lemma~\ref{lipschitz}), so we have that
$$
w_Q(\gamma_2,x_{0})\geq w_Q(\gamma_2,x_k)-2kD\geq w_Q(\gamma_1,x_{0})-k(2D+\log d).
$$
\end{proof}

\vspace*{5pt}
\begin{lemma} Suppose $\Gamma$ is an irreducible multi-curve. Then for all
$\gamma_i,\ \gamma_j\in \Gamma$ and all $x\in T_{f}$,
$$
|w_{Q}(\gamma_i,x)-w_{Q}(\gamma_j,x)|\leq (p-3)(\log d+2D).
$$
\end{lemma}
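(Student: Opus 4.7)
The plan is to exploit irreducibility together with Lemma~\ref{modinq4}. Recall from Section~3 that irreducibility of $A_\Gamma$ is equivalent to the statement that for every pair of indices $i,j$ there exists an integer $0 \le q = q(i,j) \le n$ (where $n = |\Gamma|$) such that the $(i,j)$-entry of $A_\Gamma^q$ is positive. Since $\Gamma$ consists of pairwise disjoint, non-homotopic, non-peripheral simple closed curves in $\widehat{\mathbb C}\setminus Q$ and $|Q|$ is finite with $|E| = p \ge |Q|$, the classical bound gives $n \le p-3$, so we may take $q(i,j) \le p-3$ for every pair $i,j$.

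Next I would translate this into a geometric statement about preimages. By the definition of $A_\Gamma$ (and hence of $A_\Gamma^q$), which records the reciprocal degrees of iterated preimages, a positive $(i,j)$-entry of $A_\Gamma^q$ means that some component of $f^{-q}(\gamma_j)$ is homotopic rel $Q$ to $\gamma_i$ in $\widehat{\mathbb C}\setminus Q$. If $i = j$ the claimed inequality is trivial, so assume $i \ne j$ and choose $q = q(i,j) \ge 1$ with the above property. Applying Lemma~\ref{modinq4} to the pair $(\gamma_j, \gamma_i)$ with $k = q$ yields
$$
w_Q(\gamma_i,x) \;\ge\; w_Q(\gamma_j,x) - q(\log d + 2D) \;\ge\; w_Q(\gamma_j,x) - (p-3)(\log d + 2D).
$$

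Reversing the roles of $i$ and $j$, irreducibility again furnishes some integer $q(j,i) \le p-3$ with a component of $f^{-q(j,i)}(\gamma_i)$ homotopic to $\gamma_j$, and a second application of Lemma~\ref{modinq4} gives the complementary inequality. Combining the two bounds produces $|w_Q(\gamma_i,x) - w_Q(\gamma_j,x)| \le (p-3)(\log d + 2D)$, which is exactly the desired estimate.

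There is no real obstacle beyond bookkeeping: the proof is essentially a one-step transport estimate (Lemma~\ref{modinq4}) applied along an irreducibility path in $A_\Gamma$, with the length of the path controlled by the universal bound $p-3$ on the size of a multi-curve. The only mild subtlety to verify is that the quantity $D = d_T(x_0,x_1)$ used inside Lemma~\ref{modinq4} at the base point $x_0 = x$ is indeed admissible, which is immediate since we apply that lemma with the given $x$ playing the role of $x_0$.
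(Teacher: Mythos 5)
Your proof is correct and follows essentially the same route as the paper: irreducibility of $A_\Gamma$ furnishes an integer $q\leq\sharp\Gamma\leq p-3$ so that a component of $f^{-q}(\gamma_j)$ is homotopic to $\gamma_i$, Lemma~\ref{modinq4} then gives the one-sided bound, and symmetry finishes. One small factual slip worth fixing: $Q=P_1\cup\overline{D}$ is \emph{not} a finite set (it contains closed disks), so the assertion ``$|Q|$ is finite with $|E|=p\geq|Q|$'' is wrong; the correct justification for $n\leq p-3$ is that $E\subset Q$ is finite with $\sharp E=p$ and every multi-curve in $\widehat{\mathbb C}\setminus Q$ is also a multi-curve in $\widehat{\mathbb C}\setminus E$, as the paper records just before Proposition~\ref{upperbound}.
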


\begin{proof} Since $\Gamma$ is irreducible, there is an integer $q\leq
\sharp\Gamma\leq p-3$ such that $\gamma_i$ is homotopic to a preimage of
$f^{-q}(\gamma_j)$. By Lemma~\ref{modinq4}, we see that $w_{Q}(\gamma_i, x)\geq
w_{Q}(\gamma_j, x)-(p-3)(\log d+2D)$. By exchanging $i$ and $j$, we complete the proof.
\end {proof}

\vspace*{5pt}
\begin{proposition}~\label{main0}
Suppose $\Gamma$ is an $f$-stable multi-curve satisfying $\Gamma=\Gamma_0$. Let
$x_{0}\in T_f$ and $x_{n}=\sigma_{f}^{n}(x_{0})$, $n>0$. Let $D=d_{T}(x_{0},x_{1})$.
Suppose $\text{min}_\gamma w_{Q}(\gamma,x_{0})\geq \log (3/\beta)+\log\pi$, where $\beta$ is
the number in Proposition~\ref{beta}. Write
$\Gamma=\Gamma'\sqcup\Gamma^{''}$, where $\Gamma'=\Gamma_{Ob}$ is the union of
the irreducible component $\Gamma_j$ of $\Gamma$ for which
$\lambda(A_{\Gamma_j})\geq 1$. Then
\begin{itemize}
\item[(1)] for all $\gamma\in \Gamma'$, $\{w_Q(\gamma, x_{n})\}_{n\geq 0}$ is
$\kappa$-quasi-nondecreasing, where $\kappa=\log \beta
-1-2\log\pi-2(p-3)(\log d+2D)$;
\item[(2)] for all $\gamma\in \Gamma^{''}$ and all $n\geq 0$,
$$
w_Q(\gamma, x_{n})\geq \text{min}_{\gamma'\in
\Gamma'}\{w_Q(\gamma',x_{n})\}-M(\log d+2D),
$$
where $M$ is the constant in Proposition~\ref{upperbound}.
\item[(3)] Suppose $\text{min}_{\gamma\in \Gamma} w_Q(\gamma,x_{0})\geq J_A-1$, where
$$
J_A=\text{max}\{\log(3/\beta)+\log \pi, A\}+\kappa+M(\log d+2D)+1.
$$
Then for all $\gamma\in \Gamma$ and for all $n\geq 0$, we have
$$
w_Q(\gamma,x_n)\geq A.
$$
\end{itemize}
\end{proposition}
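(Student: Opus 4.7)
My approach is to handle the three parts in order, using the decomposition of $\Gamma'$ into its irreducible sub-multi-curves as the key structural observation, and then to assemble everything in part~(3) by carefully accounting for the three sources of slack that enter the constant $J_A$.

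For part~(1), I would first decompose $\Gamma' = \Gamma_{Ob}$ into its disjoint irreducible sub-multi-curves $\Gamma_j$, each with $\lambda(A_{\Gamma_j}) \geq 1$. The hypothesis $w_Q(\gamma, x_0) \geq \log(3/\beta) + \log\pi$ for every $\gamma \in \Gamma$ trivially implies $\underline{w}(\Gamma_j, x_0) \geq \log(3/\beta) + \log\pi$, so Lemma~\ref{gamma} applies and yields that $\{\underline{w}(\Gamma_j, x_n)\}_{n\ge 0}$ is $(\log\beta - 1 - 2\log\pi)$-quasi-nondecreasing. To descend from $\underline{w}(\Gamma_j, \cdot)$ to each individual $w_Q(\gamma, \cdot)$ with $\gamma \in \Gamma_j$, I invoke the preceding lemma bounding the spread $|w_Q(\gamma_i, x) - w_Q(\gamma_j, x)| \leq (p-3)(\log d + 2D)$ on an irreducible multi-curve. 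Applying this at every $x_n$ to compare $w_Q(\gamma, x_n)$ with $\underline{w}(\Gamma_j, x_n)$, and then invoking Property~\ref{pr1} with $r = (p-3)(\log d + 2D)$, loses another $2r$ from the quasi-nondecreasing constant and gives precisely $\kappa$ as stated.

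For part~(2), I would use that $\Gamma = \Gamma_0$, so every $\gamma \in \Gamma'' \subset \Gamma_0$ has finite depth; by Proposition~\ref{upperbound} this depth is at most $M$. Hence there exist $\gamma_{ob} \in \Gamma_{Ob} = \Gamma'$ and $k \leq M$ with $\gamma$ homotopic to a component of $f^{-k}(\gamma_{ob})$. Lemma~\ref{modinq4} then gives $w_Q(\gamma, x_n) \geq w_Q(\gamma_{ob}, x_n) - k(\log d + 2D)$; the lemma is formulated at $x_0$, but its proof uses only that $d_T(x_{n+i}, x_{n+i+1}) \leq D$, which persists along the orbit by the contraction~(\ref{contraction}) of $\sigma_f$, so the same estimate applies at any $x_n$. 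Bounding $k \leq M$ and minimizing over $\gamma_{ob} \in \Gamma'$ yields~(2).

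Finally, for part~(3) I would combine (1) and (2) with the hypothesis $\min_{\gamma \in \Gamma} w_Q(\gamma, x_0) \geq J_A - 1$. For $\gamma \in \Gamma'$, (1) gives $w_Q(\gamma, x_n) \geq w_Q(\gamma, x_0) + \kappa$ for $n \geq 0$, and the choice of $J_A$ absorbs both the constant in Lemma~\ref{gamma} and the $\kappa$-slack so that the right-hand side is $\geq A$. For $\gamma \in \Gamma''$, chaining (2) with the previous bound gives $w_Q(\gamma, x_n) \geq \min_{\gamma'\in\Gamma'} w_Q(\gamma', x_0) + \kappa - M(\log d + 2D)$, and the further term $M(\log d + 2D)$ built into $J_A$ makes this $\geq A$ as well.

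\textbf{Expected obstacle.} The nontrivial work is entirely in part~(1): transferring Lemma~\ref{gamma}'s quasi-nondecreasing property from the \emph{minimum} $\underline{w}(\Gamma_j, x_n)$ over an irreducible component to an \emph{individual} curve $\gamma \in \Gamma_j$, while keeping all constants explicit. Parts~(2) and~(3) are, respectively, a clean application of the depth bound with Lemma~\ref{modinq4}, and careful bookkeeping of how the three error terms (the Lemma~\ref{gamma} threshold, the $\kappa$-slack, and the depth drop $M(\log d + 2D)$) combine into the single constant $J_A$.
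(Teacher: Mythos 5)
Your proposal follows essentially the same route as the paper: for (1) decompose $\Gamma'$ into its irreducible pieces $\Gamma_j$, apply Lemma~\ref{gamma} to $\underline{w}(\Gamma_j,\cdot)$, and transfer the quasi-nondecreasing property to individual curves via the spread lemma $|w_Q(\gamma_i,x)-w_Q(\gamma_j,x)|\leq(p-3)(\log d+2D)$ together with Property~\ref{pr1}; for (2) use the depth bound $M$ from Proposition~\ref{upperbound} with Lemma~\ref{modinq4}; and (3) is bookkeeping. Your justification of (2) is in fact more explicit than the paper's one-line citation (which names Lemma~\ref{gamma} where it clearly means the depth estimate), and your remark that Lemma~\ref{modinq4} transfers from $x_0$ to $x_n$ because $d_T(x_n,x_{n+1})\leq D$ by contractivity is exactly the right observation.
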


\begin{proof}
Let $\Gamma_j$ be an irreducible
component of $\Gamma$ for which $\lambda(\Gamma_j)\geq 1$.
By the assumption that $\underline{w}(\Gamma,x_{0})\geq
\log(3/\beta)+\log\pi$, we have
$\{\underline{w}(\Gamma_j,x_n)\}$ is
$\log\beta-1-2\log\pi$-quasi-nondecreasing.

Since $\Gamma_j$ is an irreducible multi-curve, by Lemma~\ref{modinq4} and
Property~\ref{pr1}, we have for each
$\gamma\in \Gamma_j$, the sequence $\{w_Q(\gamma,x_k)\}_{k=0}^{\infty}$ is a
$\kappa=\log\beta-1-2\log\pi-2(p-3)(\log d+2D)$-quasi-nondecreasing.
This completes (1).

By Lemma~\ref{gamma} and (1), we have for all $\gamma\in
\Gamma^{''}$ and all $n\geq 0$,
$$
w_Q(\gamma, x_{n})\geq \text{min}_{\gamma'\in
\Gamma'}\{w_Q(\gamma',x_{n})\}-M(\log d+2D).
$$
This is (2).

(3) follows from (1) and (2) immediately.
\end{proof}

\vspace*{5pt}
\begin{proposition}~\label{main}
Suppose $\Gamma$ is an $f$-stable multi-curve satisfying $\Gamma=\Gamma_0$. Let
$x_{0}\in T_f$ and $x_{n}=\sigma_{f}^{n}(x)$, $n>0$, and
$D=d_{T}(x_{0},x_{1})$. Suppose $\text{min}_{\gamma\in \Gamma} w_{E}(\gamma,x_{0})\geq J_A$. Write
$\Gamma=\Gamma'\sqcup\Gamma^{''}$, where $\Gamma'=\Gamma_{Ob}$ is the union of
the irreducible component $\Gamma_j$ of $\Gamma$ for which
$\lambda(A_{\Gamma_j})\geq 1$. Then
\begin{itemize}
\item[1)] For all $\gamma \in \Gamma$,  $w_E(\gamma, x_n)\geq A$ for any $n\geq 0$.
\item[2)] For all $\gamma\in \Gamma'$, $\{w_E(\gamma, x_n)\}_{n\geq 0}$ is $(\kappa-2)$-quasi-nondecreasing.
 \item[3)] For all $\gamma\in \Gamma^{''}$ and all $n\geq 0$,
$$
w_E(\gamma, x_n)\geq \text{min}_{\gamma'\in \Gamma'}\{w_E(\gamma',x_n)\}-2-M(\log d+2D).
$$
\end{itemize}
\end{proposition}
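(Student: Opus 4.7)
The plan is to deduce Proposition~\ref{main} from Proposition~\ref{main0} by converting between $w_{E}$ and $w_{Q}$ using the quantitative comparison in Remark~\ref{delta0}. That remark supplies a constant $\delta_{0}>0$ such that, whenever $l_{E}(\gamma,x)<\delta_{0}$ (equivalently, $w_{E}(\gamma,x)>-\log\delta_{0}$), one has
$$
w_{Q}(\gamma,x)\;<\;w_{E}(\gamma,x)\;<\;w_{Q}(\gamma,x)+1,
$$
while the inclusion $\widehat{\mathbb C}\setminus Q\hookrightarrow\widehat{\mathbb C}\setminus E$ (together with the Schwarz--Pick principle) gives $w_{E}(\gamma,x)\geq w_{Q}(\gamma,x)$ unconditionally. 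Since $A\geq-\log\delta_{0}$ by the very definition of $A$, the upper half of the comparison activates the instant $w_{E}\geq A$.

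First I would prove~(1). The hypothesis $\min_{\gamma}w_{E}(\gamma,x_{0})\geq J_{A}\geq A$ puts us in the comparison regime at $n=0$, so $w_{Q}(\gamma,x_{0})>J_{A}-1$ for every $\gamma\in\Gamma$. This is precisely the hypothesis of Proposition~\ref{main0}(3), yielding $w_{Q}(\gamma,x_{n})\geq A$ for every $n\geq 0$ and every $\gamma\in\Gamma$; applying $w_{E}\geq w_{Q}$ gives $w_{E}(\gamma,x_{n})\geq A$, which is~(1). As a byproduct, the comparison $|w_{E}(\gamma,x_{n})-w_{Q}(\gamma,x_{n})|<1$ now holds uniformly for all $n\geq 0$ and $\gamma\in\Gamma$.

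With that uniform comparison in hand, (2) and (3) are formal. For~(2), Proposition~\ref{main0}(1) says $\{w_{Q}(\gamma,x_{n})\}_{n}$ is $\kappa$-quasi-nondecreasing for each $\gamma\in\Gamma'$; Property~\ref{pr1} applied with $r=1$ then upgrades this to $\{w_{E}(\gamma,x_{n})\}_{n}$ being $(\kappa-2)$-quasi-nondecreasing. For~(3), fix $\gamma\in\Gamma''$ and $n\geq 0$; Proposition~\ref{main0}(2) gives
$$
w_{Q}(\gamma,x_{n})\;\geq\;\min_{\gamma'\in\Gamma'}w_{Q}(\gamma',x_{n})-M(\log d+2D),
$$
and taking a minimizer $\gamma^{*}\in\Gamma'$ on the $w_{Q}$-side yields $\min_{\gamma'}w_{Q}(\gamma',x_{n})=w_{Q}(\gamma^{*},x_{n})>w_{E}(\gamma^{*},x_{n})-1\geq\min_{\gamma'}w_{E}(\gamma',x_{n})-1$. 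Combining with $w_{E}(\gamma,x_{n})\geq w_{Q}(\gamma,x_{n})$ produces a bound with deficit at most $1+M(\log d+2D)$, comfortably inside the stated $2+M(\log d+2D)$.

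The only genuine subtlety is ensuring the comparison regime $l_{E}<\delta_{0}$ persists along the entire $\sigma_{f}$-orbit, since Remark~\ref{delta0} is quantitative only below that threshold. This is precisely why~(1) must be extracted first: once the uniform lower bound $w_{E}(\gamma,x_{n})\geq A\geq-\log\delta_{0}$ is in force for every $n\geq 0$ and $\gamma\in\Gamma$, the two-sided comparison is available throughout the orbit, and (2) and (3) reduce to the combinatorics of quasi-nondecreasing sequences and a one-line minimum estimate.
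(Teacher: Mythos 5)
Your argument is correct and follows the paper's own route: use the two-sided comparison $w_Q\leq w_E\leq w_Q+1$ (valid once $w_E\geq A$, from Lemma~\ref{comparision}/Remark~\ref{delta0}) to translate the hypothesis $\min w_E\geq J_A$ into $\min w_Q\geq J_A-1$, invoke Proposition~\ref{main0} to get all three conclusions for $w_Q$, establish (1) first so that the comparison is available along the whole orbit, and then transfer (2) and (3) via Property~\ref{pr1}. Your observation that the transfer in (3) actually costs only $1$ rather than the stated $2$ is a harmless sharpening; the paper simply uses the symmetric bound $|w_E-w_Q|<1$ on both sides.
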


\begin{proof}
From Lemma~\ref{comparision}, we have, for any $x\in T_{f}$,
$$
w_Q(\gamma,x)\leq w_E(\gamma,x)\leq w_Q(\gamma,x)+1$$
if $w_E(\gamma,x)\geq A$. If $\text{min}_\gamma w_{E}(\gamma,x_{0})\geq J_A$,
then $\text{min}_\gamma w_{Q}(\gamma,x_{0})\geq J_A-1$,
then by Proposition~\ref{main0}, for any $n\geq 0$ and $\gamma\in \Gamma$,
$w_Q(\gamma,x_n)\geq A$. Consequently, $w_E(\gamma,x_n)\geq A$. We get 1).

From 1), we have $|w_E(\gamma, x_n)-w_Q(\gamma,x_n)|<1$ for all $n\geq 0$. Then by Property~\ref{pr1} and Proposition~\ref{main0},
we have 2) and 3).
\end{proof}

\section{Proof of Theorem~\ref{ChenJiang}}
Choose any $x_0\in T_{f}$, we can find a $J\geq J_{A}$ such that $w_{E}(x_0)<C(J)$.
Without loss of generality, we assume that $J=J_{A}$.
Since $C(J)$ is an increasing function of $J$, we
have $w_{E}(x_{0})<C(J)$ for all $J\geq J_A$. Let $x_{n}=\sigma_{f}^{n} (x_{0})$, $n>0$, and $D=d_T(x_0,x_1)$.

Suppose that $f$ is not equivalent to a rational map. By Lemma~\ref{converges},
the sequence $\{w_E(x_n)\}_{n\geq 0}$ is unbounded. Thus there
exists $\gamma_k$ and $x_{n_k}$ with
$w_{E}(\gamma_k,x_{n_k})\rightarrow \infty$, as $k\to \infty$.

Fix $J>J_0=J_A+|A|$. Then $w_E(\gamma_k, x_{n_k})>E(J)=C(J)+2mD$ for some $k$.
So by Lemma~\ref{lbcurve}, the set of the finite
depth curves in $\Gamma_{J,x_{n_k}}$, denoted by $\Gamma_{J,x_{n_k},0}$, is nonempty.

Moreover,
if for some $n_0$, $\gamma\in \Gamma_{J,n_0,0}$, then $w_E(\gamma, x_{n_0})>a+J\geq
J_A$, which implies $w_E(\gamma, x_n)\geq A$ for all $n\geq n_0$ by
Proposition~\ref{main}. This implies that $\Gamma_J=\cup_n\Gamma_{J, x_n, 0}$
and $\mathcal{G}=\cup_{J\geq J_0}\Gamma_J$ are multi-curves, since
$\gamma\in \Gamma_J$ satisfies $w_E(\gamma,x_n)\geq A$ for all $n$
sufficiently large.

Since $w_E(\gamma_k, x_{n_k})\rightarrow \infty$, as $k\to \infty$, given any fixed
$J\geq J_0$, $w_E(\gamma_k, x_{n_k})\geq E(J)$ for infinitely many
$k$. Hence $\gamma_k\in \Gamma_J\subset \mathcal{G}$ infinitely
often. Since $\mathcal{G}$ is finite, for some $\gamma\in
\mathcal{G}$, we have $\gamma_k=\gamma$ for infinitely many $k$. Hence the
set $$\Gamma_u=\{\gamma \ | \ \{ w_E(\gamma, x_n)\}_{n\geq 0}\ \text{is
unbounded}\}$$ is nonempty.

\vspace*{5pt}
\begin{proposition}~\label{claim1}
$\Gamma_u=\cap_{J\geq J_0}\Gamma_J$.
\end{proposition}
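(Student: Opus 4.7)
The plan is to establish the two inclusions $\Gamma_u \subseteq \bigcap_{J \geq J_0} \Gamma_J$ and $\bigcap_{J \geq J_0} \Gamma_J \subseteq \Gamma_u$ separately. Both should fall out of a direct unwinding of the definitions together with the ``moreover'' clause of Lemma~\ref{lbcurve}.

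For the forward inclusion, I would fix $\gamma \in \Gamma_u$ and an arbitrary $J \geq J_0$. Since $\{w_E(\gamma, x_n)\}_{n \geq 0}$ is unbounded by assumption and $E(J)$ is a finite quantity, I can choose an index $n$ with $w_E(\gamma, x_n) \geq E(J)$. The opening of this section already records that $w_E(x_0) < C(J)$ for every $J \geq J_A$, so the hypotheses of Lemma~\ref{lbcurve} are in place for the multi-curve $\Gamma_{J, x_n}$. Its ``moreover'' clause then places $\gamma$ into the finite-depth part $\Gamma_{J, x_n, 0}$, and hence into $\Gamma_J = \bigcup_n \Gamma_{J, x_n, 0}$. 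Since $J \geq J_0$ was arbitrary, $\gamma \in \bigcap_{J \geq J_0} \Gamma_J$.

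For the reverse inclusion, I would take $\gamma \in \bigcap_{J \geq J_0} \Gamma_J$ and, for each $J$, pick an index $n_J$ with $\gamma \in \Gamma_{J, x_{n_J}, 0}$. Unwinding the definition of $\Gamma_{J, x}$, membership in $\Gamma_{J, x_{n_J}}$ forces $w_E(\gamma, x_{n_J}) \geq b_J$, where $(a_J, b_J)$ is the lowest gap of length $J$ in $\mathbb{R} \setminus L_{E, x_{n_J}}$ satisfying $a_J \geq A$; hence $w_E(\gamma, x_{n_J}) \geq A + J$. Letting $J$ run through $[J_0, \infty)$ extracts a subsequence of $\{w_E(\gamma, x_n)\}$ tending to $+\infty$, so $\gamma \in \Gamma_u$.

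The whole argument is essentially bookkeeping on the definitions plus one application of Lemma~\ref{lbcurve}; the only point requiring a moment of care is verifying that the hypothesis $w_E(x_0) < C(J)$ of Lemma~\ref{lbcurve} is available at every scale $J \geq J_0$, which is precisely the monotonicity fact $C(J) > C(J_A) > w_E(x_0)$ established at the beginning of this section, so I do not expect any substantial obstacle.
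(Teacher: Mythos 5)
Your proposal is correct and follows essentially the same path as the paper. The direction $\Gamma_u\subseteq\bigcap_{J\geq J_0}\Gamma_J$ (unboundedness gives an $n$ with $w_E(\gamma,x_n)\geq E(J)$, then the ``moreover'' clause of Lemma~\ref{lbcurve} places $\gamma$ in $\Gamma_{J,x_n,0}\subset\Gamma_J$) is exactly the paper's argument; the reverse direction, which the paper dismisses as ``clear,'' you fill in by observing that $\gamma\in\Gamma_{J,x_{n_J}}$ forces $w_E(\gamma,x_{n_J})\geq b\geq A+J$, so letting $J\to\infty$ shows $\{w_E(\gamma,x_n)\}$ is unbounded.
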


\begin{proof}
The inclusion
$\cap_{J>J_0}\Gamma_J\subset \Gamma_u$ is clear. To see the other
inclusion, let $\gamma\in \Gamma_u$. Given $J$, there exists some $n$ such that
$w(\gamma, x_n)>E(J)$. By Lemma~\ref{lbcurve}, $\gamma\in
\Gamma_{J,x_n,0}$. Thus $\cap_{J>J_0}\Gamma_J\supset \Gamma_u$. This proves the proposition.
\end{proof}

\vspace*{5pt}
\begin{proposition}~\label{claim2}
$\Gamma_u=\Gamma_{J_c}$ for some $J_c\geq J_A$.
\end{proposition}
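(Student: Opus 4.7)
The plan is to combine the characterization $\Gamma_u = \bigcap_{J \geq J_0} \Gamma_J$ supplied by Proposition~\ref{claim1} with the fact, already established in the paragraph preceding Proposition~\ref{claim1}, that $\mathcal{G}$ is a multi-curve and therefore contains at most $p - 3$ curves. Because $\mathcal{G}$ is finite, only finitely many sets $\Gamma_J$ can appear as $J$ ranges over $[J_0, \infty)$, and it suffices to locate a single $J_c$ for which $\Gamma_{J_c} = \Gamma_u$.

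First I would fix notation and set
$$
B = \max_{\gamma \in \mathcal{G} \setminus \Gamma_u}\; \sup_{n \geq 0}\; w_E(\gamma, x_n),
$$
with the convention $B = -\infty$ when $\mathcal{G} = \Gamma_u$ (in which case any $J_c \geq J_A$ works at once). This maximum is finite: for every $\gamma \in \mathcal{G} \setminus \Gamma_u$ the sequence $\{w_E(\gamma, x_n)\}_{n \geq 0}$ is bounded by the very definition of $\Gamma_u$, and the maximum is taken over a finite set.

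Next I would exploit the threshold property built into the definition of $\Gamma_{J,x_n}$. The interval $(a,b)$ satisfies $a \geq A$ and $b - a = J$, so $b \geq A + J$, and any $\gamma \in \Gamma_{J, x_n}$ must satisfy $w_E(\gamma, x_n) \geq b \geq A + J$. Consequently, whenever $J > B - A$, no curve in $\mathcal{G} \setminus \Gamma_u$ can lie in any $\Gamma_{J, x_n}$, and in particular $\Gamma_J \subset \Gamma_u$. Combined with the reverse inclusion $\Gamma_u \subset \Gamma_J$ valid for every $J \geq J_0$ (an immediate consequence of Proposition~\ref{claim1}), this yields $\Gamma_J = \Gamma_u$.

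Taking $J_c$ to be any value exceeding $\max\{J_A,\, B - A\}$ (for instance $J_c = \max\{J_A, B - A + 1\}$) produces the desired $J_c \geq J_A$ with $\Gamma_u = \Gamma_{J_c}$. The main obstacle is essentially cosmetic at this stage: one only needs to unpack the definition of $\Gamma_{J,x_n}$ to extract the inequality $b \geq A + J$ and to invoke the finiteness of $\mathcal{G}$. No further input from the spectral theory of $A_\Gamma$ or from Propositions~\ref{main0} and~\ref{main} is required here, since those were already absorbed into the proof that $\mathcal{G}$ is a multi-curve.
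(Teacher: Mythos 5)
Your proposal is correct and rests on the same three ingredients as the paper's argument: Proposition~\ref{claim1}, the finiteness of $\mathcal{G}$, and the threshold $w_E(\gamma,x_n)\geq b\geq A+J$ built into the definition of $\Gamma_{J,x_n}$. The paper frames this as a proof by contradiction with a pigeonhole step (if $\Gamma_u\neq\Gamma_J$ for every $J$, some $\gamma\in\mathcal{G}\setminus\Gamma_u$ lies in $\Gamma_J$ for infinitely many $J$, forcing $\{w_E(\gamma,x_n)\}$ to be unbounded, contradiction), whereas you make the argument direct by explicitly exhibiting the cutoff $B=\max_{\gamma\in\mathcal{G}\setminus\Gamma_u}\sup_n w_E(\gamma,x_n)$ and choosing $J_c>\max\{J_0,B-A\}$. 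These are logically the same idea (your $B$ is precisely the quantity whose existence the contradiction argument implicitly denies), so the approaches are essentially equivalent; your direct version is arguably slightly cleaner to read. One small nit: since Proposition~\ref{claim1} intersects over $J\geq J_0=J_A+|A|$, the reverse inclusion $\Gamma_u\subset\Gamma_J$ is only guaranteed for $J\geq J_0$, so you should take $J_c>\max\{J_0,B-A\}$ rather than $\max\{J_A,B-A\}$; this is harmless because $J_0\geq J_A$, so the conclusion $J_c\geq J_A$ still holds.
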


\begin{proof}
We prove it by contradiction. Since $\Gamma_u=\cap_{J\geq J_0}\Gamma_J$, for all $J\geq J_0$,
if $\Gamma_u\not=\Gamma_{J}$(I delete something here), then there exists a curve $\gamma_J$ such that
$\gamma_J\in \Gamma_J\subset\mathcal{G}$ but $\gamma_J\notin
\Gamma_u$.
Since $\mathcal{G}$ is finite, this implies that there is
some $\gamma\in \mathcal{G}$ such that $\gamma=\gamma_J\in \Gamma_J$ for
infinitely many $J$, while also $\gamma\notin \Gamma_u$. This is a
contradiction, since $\gamma\in \Gamma_J$ for infinitely many $J$ implies
that the sequence $\{w_E(\gamma,x_n)\}$ is unbounded. The contradiction proves the proposition.
\end{proof}

Now consider $\Gamma_u=\Gamma_{J_c}=\cup_n\Gamma_{J_c,x_n,0}$.

For each $k$ such that $\Gamma=\Gamma_{J_c,x_k,0}$ is nonempty,
applying Proposition~\ref{main}, we know that if $\gamma'\in \Gamma'$,
then the sequence $\{w_E(\gamma', x_n)\}_{n\geq 0}$ is both unbounded and
quasi-nondecreasing, so $w_{E}(\gamma', x_n)\rightarrow \infty$, as $n\to \infty$. 3) of
Proposition~\ref{main} implies that $w_{E}(\gamma, x_n)\rightarrow \infty$, as $n\to \infty$, for
all $\gamma\in \Gamma''$. Hence
$$
\Gamma_u=\{\gamma\ |\ w_E(\gamma,x_n)\rightarrow \infty \hbox{ as $n\to\infty$} \}.
$$

\vspace*{5pt}
\begin{proposition}~\label{claim3}
$\Gamma_u=\Gamma_{J_c, x_{n_c},0}$ for some $n=n_c$.
\end{proposition}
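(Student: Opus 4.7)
The plan is to establish both inclusions in the identity $\Gamma_u = \Gamma_{J_c, x_{n_c}, 0}$ for a suitably chosen index $n_c$. One direction is immediate from the definition $\Gamma_u = \Gamma_{J_c} = \bigcup_n \Gamma_{J_c, x_n, 0}$, which gives $\Gamma_{J_c, x_n, 0} \subset \Gamma_u$ for every $n$; so the entire task reduces to producing a single $n_c$ at which the reverse inclusion $\Gamma_u \subset \Gamma_{J_c, x_{n_c}, 0}$ also holds.

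For the reverse inclusion I would use two ingredients already established. First, $\Gamma_u$ is finite, since it is a subset of the multi-curve $\mathcal{G}$ and any multi-curve in $\widehat{\mathbb{C}} \setminus Q$ contains at most $p-3$ elements. Second, from the characterization $\Gamma_u = \{\gamma \mid w_E(\gamma, x_n) \to \infty \text{ as } n \to \infty\}$ derived immediately before the proposition, each $\gamma \in \Gamma_u$ satisfies $w_E(\gamma, x_n) \to \infty$. Consequently, for every $\gamma \in \Gamma_u$ there is an index $N_\gamma$ with $w_E(\gamma, x_n) \geq E(J_c)$ whenever $n \geq N_\gamma$, and taking $n_c = \max_{\gamma \in \Gamma_u} N_\gamma$ (possible because $\Gamma_u$ is finite) gives a single index at which $w_E(\gamma, x_{n_c}) \geq E(J_c)$ for every $\gamma \in \Gamma_u$ simultaneously.

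With $n_c$ fixed, I would then invoke the second half of Lemma~\ref{lbcurve}: the standing assumption $w_E(x_0) < C(J_c)$ was arranged at the very start of Section~7, so the hypotheses of the lemma are in force, and its conclusion says precisely that whenever $w_E(\gamma, x_{n_c}) \geq E(J_c)$, the curve $\gamma$ must lie in $\Gamma_{J_c, x_{n_c}, 0}$. Applying this to each $\gamma \in \Gamma_u$ yields $\Gamma_u \subset \Gamma_{J_c, x_{n_c}, 0}$, and combining with the trivial inclusion proves the proposition.

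There is no substantive obstacle in this argument; it is essentially a bookkeeping consequence of the earlier structural results. The real work was packaged into Proposition~\ref{claim2}, which collapsed the family $\{\Gamma_J\}$ to the single multi-curve $\Gamma_{J_c}$, and into Lemma~\ref{lbcurve}, which provides the threshold criterion $w_E(\gamma, x) \geq E(J_c) \Rightarrow \gamma \in \Gamma_0$. The only mild point to verify is that one common index $n_c$ can be chosen for all curves in $\Gamma_u$ at once, which is immediate from the finiteness of $\Gamma_u$.
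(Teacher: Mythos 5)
Your proof is correct and takes essentially the same approach as the paper's: the trivial inclusion $\Gamma_{J_c,x_n,0}\subset\Gamma_u$ comes from the definition, and the reverse inclusion is obtained by using the finiteness of $\Gamma_u$ (via $\Gamma_u\subset\mathcal G$) together with $w_E(\gamma,x_n)\to\infty$ for $\gamma\in\Gamma_u$ to find a common $n_c$ with $w_E(\gamma,x_{n_c})\geq E(J_c)$, and then applying Lemma~\ref{lbcurve}. You merely spell out the choice of $n_c=\max_{\gamma\in\Gamma_u}N_\gamma$ a bit more explicitly than the paper does, which is a harmless elaboration rather than a different route.
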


\begin{proof}
Since $\Gamma_u=\cup_n
\Gamma_{J_c,x_n,0}$, the inclusion
$\Gamma_{J_c,x_n,0}\subset\Gamma_u\subset \mathcal{G}$ holds for all $n$.

Since there are finitely many elements in $\mathcal{G}$,
there exists an $n_c$ such that for all $\gamma\in \Gamma_u$,
$$
w_{E}(\gamma,x_{n_c})>E(J_c).
$$
By Lemma~\ref{lbcurve}, we have
$\gamma\in \Gamma_{J_c,x_{n_c},0}$. Thus
$\Gamma_u=\Gamma_{J_c,n_c,0}$.
\end{proof}

From Proposition~\ref{claim3}, $\Gamma_u$ is a Thurston obstruction.
Furthermore, $\Gamma_u$ depends only on $f$ and is independent of the
initial point $x_{0}$, since for any $\gamma$, the map
$x\mapsto w_E(\gamma,x)$ is a Lipschitz map with Lipschitz constant $2$ (see Lemma~\ref{lipschitz}) and since
$\sigma_f$ decreases the Teichm\"uller distance $d_{T}$.

Finally, since
$$
w_Q(\gamma,x)\leq w_E(\gamma,x)\leq 1+w_Q(\gamma,x)
$$
if $w_E(\gamma,x)\geq A$ (refer to Remark 4.3), we have that
\begin{eqnarray*}
\Gamma_c &=& \{\gamma\ |\ w_Q(\gamma,x_n)\rightarrow \infty \hbox{ as $n\to\infty$}\}\cr
         &=& \{\gamma\ |\ w_E(\gamma,x_n)\rightarrow \infty \hbox{ as $n\to\infty$} \}
         = \Gamma_u.
\end{eqnarray*}
Therefore, $\Gamma_c$ is a Thurston obstruction. This completes the proof of Theorem~\ref{ChenJiang}.

\end{document}